\setlist[enumerate]{leftmargin=.5in}
\setlist[itemize]{leftmargin=.5in}
\crefname{hypothesis}{Hypothesis}{Hypotheses}
\newtheorem{assumption}{Assumption}
\DeclareMathOperator*{\diag}{\texttt{diag}}
\DeclareMathOperator{\Id}{Id} 
\newcommand{\domain}{\Omega} 
\newcommand{\Cdot}{\,\cdot\,}
\newcommand{\bx}{\boldsymbol{x}}
\newcommand{\by}{\boldsymbol{y}}
\newcommand{\bz}{\boldsymbol{z}}
\newcommand{\bdf}{\boldsymbol{f}}
\newcommand{\bdg}{\boldsymbol{g}}
\newcommand{\bda}{\boldsymbol{a}}
\newcommand{\bdb}{\boldsymbol{b}}
\newcommand{\bdu}{\boldsymbol{u}}
\newcommand{\bdv}{\boldsymbol{v}}
\newcommand{\bds}{\boldsymbol{s}}
\newcommand{\bdw}{\boldsymbol{w}}
\newcommand{\bdc}{\boldsymbol{c}}
\newcommand{\DistFunc}{\mathcal{D}}
\newcommand{\RegFunc}{\mathcal{R}}
\newcommand{\bdbeta}{\boldsymbol{\beta}}
\newcommand{\bdomega}{\boldsymbol{\omega}}
\newcommand{\proj}{\texttt{Proj}}
\newcommand{\prox}{\mathrm{\texttt{Prox}}}
\newcommand{\bdK}{\boldsymbol{K}}
\newcommand{\whv}{\widehat{\boldsymbol{v}}}
\newcommand{\whu}{\widehat{\boldsymbol{u}}}
\newcommand{\whf}{\widehat{\boldsymbol{f}}}
\newcommand{\wtH}{\widetilde{H}}
\newcommand{\tra}{\boldsymbol{\mathsf{T}}}
\newcommand{\ctra}{\boldsymbol{\mathsf{H}}}
\newcommand{\Real}{\mathbb{R}}
\newcommand{\one}{\vmathbb{1}}
\newcommand{\two}{\vmathbb{2}}
\crefname{line}{step}{steps}
\Crefname{line}{Step}{Steps}
\numberwithin{theorem}{section}
\DeclareAcronym{TV}{
  short = TV,
  long = total variation}
\DeclareAcronym{KL}{
  short = KL,
  long = Kullback--Leibler}
\DeclareAcronym{CT}{
  short = CT,
  long = computed tomography}
  \DeclareAcronym{ART}{
  short = ART,
  long = algebraic reconstruction technique}
  \DeclareAcronym{EART}{
  short = EART,
  long = extended algebraic reconstruction technique}
   \DeclareAcronym{DBT}{
  short = DBT,
  long = digital breast tomosynthesis}
   \DeclareAcronym{PDHGM}{
  short = PDHGM,
  long = primal-dual hybrid gradient method}
  \DeclareAcronym{l.s.c}{
  short = l.s.c.,
  long = lower semicontinuous}
  \DeclareAcronym{ADMM}{
  short = ADMM,
  long = alternating direction method of multipliers}
\newcommand{\TheTitle}{An Extended Primal-Dual Algorithm Framework for Nonconvex Problems with Application to Nonlinear Imaging} 
\newcommand{\TheAuthors}{Yu Gao, Xiaochuan Pan, Chong Chen}
\title{{\TheTitle}
}
\author{
  Yu Gao\thanks{LSEC, ICMSEC, Academy of Mathematics and Systems Science, Chinese Academy of Sciences, Beijing 100190, China. School of Mathematical Sciences, University of Chinese Academy of Sciences, Beijing 100049, China.}
  \and
  Xiaochuan Pan\thanks{Department of Radiology, The University of Chicago, Chicago, IL 60637, USA.}
  \and
  Chong Chen\thanks{LSEC, ICMSEC, Academy of Mathematics and Systems Science, Chinese Academy of Sciences, Beijing 100190, China. School of Mathematical Sciences, University of Chinese Academy of Sciences, Beijing 100049, China.}
}
\begin{document}

\maketitle

\begin{abstract}
We propose an extended primal-dual algorithm framework for solving a general nonconvex optimization model. This work is motivated by image reconstruction problems in a class of nonlinear imaging, where the forward operator can be formulated as a nonlinear convex function with respect to the reconstructed image. Using the proposed framework, we put forward six specific iterative schemes, and present their detailed mathematical explanation. We also establish the relationship to existing algorithms. Moreover, under proper assumptions, we analyze the convergence of the schemes for the general model when the optimal dual variable regarding the nonlinear operator is non-vanishing. As a representative, the image reconstruction for spectral computed tomography is used to demonstrate the effectiveness of the proposed algorithm framework. By special properties of the concrete problem, we further prove the convergence of these customized schemes when the optimal dual variable regarding the nonlinear operator is vanishing. Finally, the numerical experiments show that the proposed algorithm has good performance on image reconstruction for various data with non-standard scanning configuration. 
\end{abstract}

\begin{keywords}
  extended primal-dual algorithm, nonconvex sparse optimization, convex forward operator, image reconstruction, nonlinear inverse imaging problems  
\end{keywords}

\begin{AMS}
  65R32, 68U10, 92C55, 94A08
\end{AMS}

\section{Introduction}\label{sec:intro}

We consider image reconstruction problems given as nonlinear operator equations    
\begin{equation}\label{eq:nl_op_eq}
\bdg  = \bdK(\bdf) + \bdg_{\text{noise}},
\end{equation}
where $\bdK : \mathcal{X} \rightarrow \mathcal{Y}_K$ denotes a Fr\'echet differentiable nonlinear operator,  
$\bdf \in \domain \subset \mathcal{X}$ is the image to be reconstructed, $\bdg \in \mathcal{Y}_K$ is  
measured data, and $\bdg_{\text{noise}} \in \mathcal{Y}_K$ is uncertain noise.  
Here $\mathcal{X}$ and $\mathcal{Y}_K$ are two finite-dimensional real vector spaces equipped with an inner 
product $\langle\cdot,\cdot\rangle$ and norm $\|\cdot\| = \sqrt{\langle\cdot,\cdot\rangle}$, 
and $\domain$ is a closed convex subset of $\mathcal{X}$. 

Specifically, in spectral \ac{CT}, for instance, the component of forward projection operator can be 
formulated as the form of log-sum exponential functional via basis material decomposition \cite{alma76,maal76},
\begin{equation}\label{eq:log_sum_exp}
K_j(\bdf)  := \ln\sum_{m=1}^{M} s_{jm} \exp\Biggl(-\sum_{d=1}^{D} b_{dm}\bda_j^{\tra} \bdf_{\!\!d} \Biggr),  
\end{equation}
where $\bdK := [K_1, \ldots, K_J]^{\tra}$, $s_{jm}$ is the discrete 
normalized X-ray spectrum at energy bin $m$ for ray $j$ with $\sum_{m=1}^Ms_{jm} = 1$, $b_{dm}$ is the decomposition coefficient 
at energy bin $m$ of the $d$-th basis material, $\bdf_{\!\!d}$ is the unknown density (image) of the $d$-th basis material,  
$\bda_j$ is the discrete X-ray transform determined by ray $j$, and $\tra$ denotes transpose. For \ac{DBT}, the functional of 
form \cref{eq:log_sum_exp} is applied as the component of forward projection operator as well \cite{lpna17}. 
Moreover, for the discrete X-ray transform with nonlinear partial volume effect \cite{chpan20}, the component of $\bdK$ 
can be represented as 
\begin{equation}\label{eq:NPVE}
    K_j(\bdf) := \ln \frac{1}{N_j} \sum_{l_j=1}^{N_j} \exp\bigl( -\bda_{l_j}^{\tra}\bdf \bigr),
\end{equation}
where $N_j$ denotes the considered total number of rays between detector bin $j$ and focal spot. 
On the other hand, the component of $\bdK$ in phase retrieval can be written as 
\begin{equation}\label{eq:PhaseR}
    K_j(\bdf) := \vert\langle \bda_j, \bdf \rangle\vert^2 =  \bdf^{\ctra}\bda_j\bda_j^{\ctra}\bdf, 
\end{equation}
where $\bdf$ and $\bda_j$ are complex vectors, and $\ctra$ denotes conjugate transpose \cite{WaXu19}.  

The image reconstruction problem of interest is to determine unknown $\bdf$ from measured 
data $\bdg$ as in \cref{eq:nl_op_eq}, where the data is often sparse sampled and/or contains uncertain noise. 
Generally, solving nonlinear system of the form   
\begin{equation}\label{eq:ill_posed_prob}
\bdK(\bdf) = \bdg 
\end{equation}
is equivalent to solve an ill-posed inverse problem since the solutions of \cref{eq:ill_posed_prob} do not 
depend continuously on the data \cite{kanesch08}. An \ac{EART} was proposed to solve \cref{eq:ill_posed_prob} for image reconstruction 
in dual spectral CT (DECT) in \cite{zhaozz14}, which is actually a generalization of \ac{ART} or Kaczmarz method for 
solving linear system to the nonlinear case in \cref{eq:ill_posed_prob}. Similarly, a stochastic gradient descent 
method was employed and analyzed to solve \cref{eq:ill_posed_prob} in \cite{sgd_for_ill_posed}. The authers in \cite{lpna17} 
reformulated the image reconstruction problem in \cref{eq:ill_posed_prob} for \ac{DBT} as a nonlinear 
least square problem, and proposed a limited memory BFGS method. 
Moreover, there are several iterative regularization methods that were developed to solve \cref{eq:ill_posed_prob}, 
such as nonlinear Landweber iteration, and Newton type methods (see \cite{FaYu05,kanesch08} 
and the references therein). However, few of these literatures considered a regularization term, 
let alone a nonsmooth one. 

The image reconstruction problem in \cref{eq:nl_op_eq} can be reformulated as the following constrained 
optimization problem 
\begin{equation}\label{eq:opt}
 \mathop{\min}_{\bdf \in \domain} \Bigl\{\DistFunc\bigl(\bdK(\bdf), \bdg\bigr) + \RegFunc(\bdf)\Bigr\}. 
\end{equation}
Here $\DistFunc(\cdot, \cdot)$ denotes the data fidelity term which is often designed as the form 
of $\ell_p$-norm or \ac{KL} divergence,
and $\RegFunc(\cdot)$ stands for the regularization term which is 
frequently schemed as $\ell_1$-norm of the sparse representation of the image under certain basis 
functions \cite{El08}.  
Generally, it can be written as  
\begin{equation}\label{eq:sparse_rep}
 \RegFunc(\bdf) := \lambda \phi(A \bdf),  
\end{equation}
where 
$A : \mathcal{X} \rightarrow \mathcal{Y}_A$ is a continuous linear operator and denotes the associated discrete 
form of the sparse representation, $\mathcal{Y}_A$ is also a finite-dimensional 
real vector space, and $\lambda$ is a nonnegative regularization parameter. Hence, a general nonconvex 
optimization model can be represented as minimizing a sum of three terms of the form
\begin{equation} \label{eq:opt_primal}
\min_{\bdf \in \mathcal{X}} \Bigl\{F\bigl(\bdK(\bdf)\bigr) + E(A\bdf) + G(\bdf)\Bigr\},  
\end{equation} 
where each component of $\bdK$ and $A$ are nonlinear and linear with regard to $\bdf$ respectively. 
Note that we do not merge the first two terms in \cref{eq:opt_primal} into a single one as the model 
studied in \cite{va14}, which can help us to focus on the specific term regarding the nonlinear forward operator. 
For example in \cref{eq:opt}, we can let $F(\Cdot) := \DistFunc(\Cdot,\bdg)$, $E(\Cdot) := \lambda \phi(\Cdot)$ 
by \cref{eq:sparse_rep}, and 
\begin{align*}
G(\bdf) := \delta_{\domain}(\bdf) = \begin{cases} 
     0 & \text{if}~\bdf\in \domain,  \\
    +\infty &  \text{otherwise}. 
   \end{cases}
\end{align*}

Model \cref{eq:opt_primal} can be also seen as a generalization of the following well-studied convex  
optimization problem (see \cite{eszhch10,chpo11,heyu2012})
\begin{equation} \label{eq:convex_opt_primal}
\min_{\bdf \in \mathcal{X}} \Bigl\{E(A\bdf) + G(\bdf)\Bigr\}. 
\end{equation} 
If $\bdK$ is linear, model \cref{eq:opt_primal} boils down to model \cref{eq:convex_opt_primal}. 
Even though both $F$ and $\bdK$ are nonlinear convex, their composition $F \circ \bdK$ is 
not definitely convex. It is difficult to solve the nonconvex problem as in \cref{eq:opt_primal}. 
However, there are already some studies on how to solve such kind of problems. 
For instance, an exact/linearized nonlinear \ac{PDHGM} was proposed and analyzed 
in \cite{va14,accle_Val19}, which is a natural extension of the modified \ac{PDHGM} for 
solving \cref{eq:convex_opt_primal} the linear case in \cite{chpo11}. Particularly, the authors in \cite{chpan20,chpan21} 
proposed an interesting nonconvex prinal-dual algorithm for image reconstruction in 
spectral \ac{CT} and discrete X-ray transform with nonlinear partial volume effect. A nonconvex \ac{ADMM} 
was proposed in \cite{BaSi21} to solve the nonconvex and possibly nonsmooth optimization 
problem with application to \ac{CT} imaging (see \cite{WaYiZe19}). 
But very few of existing literatures excavate the properties of forward operator $\bdK$.

In this work, we focus on solving a class of nonconvex problems in \cref{eq:opt_primal} with operator 
$\bdK$ being nonlinear convex with respect to $\bdf$, which is motivated by several practical 
nonlinear imaging problems with such kind of forward operators. 
For example, it is easy to verify that $K_j$ in \cref{eq:log_sum_exp}--\cref{eq:PhaseR}  
are convex with respect to $\bdf$. The operator convexity would be useful to develop a framework of 
extended primal-dual algorithm for these nonconvex problems. 


The outline of this paper is organized as follows. \Cref{sec:preliminary} introduces the required mathematical
preliminaries. We propose a framework of extended primal-dual algorithm for nonconvex problems, 
and establish the relationship to existing algorithms in \cref{sec:proposed_framework}. 
The associated convergence is analyzed in \cref{sec:convergence_analysis}. 
In \cref{sec:specific_algorithms}, we apply the proposed algorithm framework to reconstruct image in   
spectral \ac{CT}, and also give the customized analysis and several numerical experiments. 
Finally, the paper is concluded by \cref{sec:conclusion}.

\section{Preliminaries}\label{sec:preliminary}


We introduce Legendre--Fenchel transform first.
\begin{definition}(\cite[Legendre--Fenchel transform]{rowe04})\label{def:Lengendretransform}
For any function $h: \mathcal{X} \rightarrow (-\infty,+\infty]$, function $h^{\ast} : \mathcal{X} \rightarrow (-\infty,+\infty]$ defined by
\begin{equation}\label{def:conjugate}
    h^{\ast}(\bdw) := \sup_{\bdf}\Bigl\{ \langle \bdf, \bdw \rangle - h(\bdf)  \Bigr\} 
\end{equation}
is conjugate to $h$, while function $h^{\ast\ast} =(h^{\ast})^{\ast}$ defined by
\begin{equation}\label{def:biconjugate}
    h^{\ast\ast}(\bdf):= \sup_{\bdw}\Bigl\{ \langle \bdf, \bdw \rangle - h^{\ast}(\bdw)  \Bigr\}
\end{equation}
is biconjugate to $h$. The mapping $h \mapsto h^{\ast}$ is Legendre--Fenchel transform.
\end{definition}

Note that for proper, \ac{l.s.c}, convex function, its biconjugate function equals to itself, i.e., $h^{\ast\ast}( \bdf)=h( \bdf)$. 
Moreover, we have the following significant results. 
\begin{proposition}(\cite[Inversion rule for subgradient relations]{rowe04})\label{prop:inversionrule}
For any proper, \ac{l.s.c}, convex function $h$, one has $\partial h^{\ast}=(\partial h)^{-1}$ 
and $\partial h=(\partial h^{\ast})^{-1}$. 
Moreover, if $h$ is Fr\'echet differentiable, we have $\bar{\bdf}_{\!h}(\bdw) = \nabla h^{\ast}(\bdw)$ 
and $\bar{\bdw}_{\!h}(\bdf) = \nabla h(\bdf)$, where $\bar{\bdf}_{\!h}$ and $\bar{\bdw}_{\!h}$ are the optimal solutions to  
the right-hand sides in \cref{def:conjugate} and \cref{def:biconjugate}, respectively.
\end{proposition}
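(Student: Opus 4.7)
The plan is to deduce both parts from the Fenchel--Young inequality and its equality case, combined with the Fenchel--Moreau theorem asserting $h^{**}=h$ for proper, l.s.c., convex $h$ (already noted in the excerpt just before the statement).

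First, I would record the Fenchel--Young inequality: by \cref{def:conjugate}, for all $\bdf,\bdw$ one has $h(\bdf)+h^*(\bdw)\geq\langle\bdf,\bdw\rangle$, with equality exactly when $\bdf$ attains the supremum defining $h^*(\bdw)$, which by the definition of the subdifferential is equivalent to $\bdw\in\partial h(\bdf)$. This equality characterization is the main lever. To establish $\partial h^*=(\partial h)^{-1}$, I would then run the chain of equivalences
\begin{equation*}
\bdw\in\partial h(\bdf)\iff h(\bdf)+h^*(\bdw)=\langle\bdf,\bdw\rangle\iff h^{**}(\bdf)+h^*(\bdw)=\langle\bdf,\bdw\rangle\iff \bdf\in\partial h^*(\bdw),
\end{equation*}
in which the middle step uses $h=h^{**}$ and the last step is the Fenchel--Young equality applied to the conjugate pair $(h^*,h^{**})$. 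Since $h^*$ is itself proper, l.s.c., convex (as a supremum of affine functions), the companion identity $\partial h=(\partial h^*)^{-1}$ then follows by applying the result just proved to $h^*$ in place of $h$ and using $h^{**}=h$ one more time.

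For the differentiable claim, if $h$ is Fr\'echet differentiable at $\bdf$, then $\partial h(\bdf)=\{\nabla h(\bdf)\}$ is a singleton. The supremum in \cref{def:biconjugate} is attained at $\bar{\bdw}_{\!h}(\bdf)$ iff $\bdf\in\partial h^*(\bar{\bdw}_{\!h}(\bdf))$, which by the inversion rule just proved is equivalent to $\bar{\bdw}_{\!h}(\bdf)\in\partial h(\bdf)=\{\nabla h(\bdf)\}$; hence $\bar{\bdw}_{\!h}(\bdf)=\nabla h(\bdf)$. Swapping the roles of $h$ and $h^*$, and of $\bdf$ and $\bdw$, yields $\bar{\bdf}_{\!h}(\bdw)=\nabla h^*(\bdw)$ by exactly the same argument.

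The main obstacle lies not in the algebra but in the standing hypotheses: the step $h=h^{**}$ relies on the Fenchel--Moreau theorem, which is classical but rests on the Hahn--Banach separation theorem applied to the epigraph of $h$. A secondary technical point is that the identity $\bar{\bdf}_{\!h}(\bdw)=\nabla h^*(\bdw)$ implicitly requires $h^*$ to be differentiable at $\bdw$ so that the maximizer is unique; this is guaranteed when $h$ is strictly convex, which is consistent with the regularity imposed in the applications of interest, and I would flag this dependency explicitly rather than hide it behind the notation $\bar{\bdf}_{\!h}$.
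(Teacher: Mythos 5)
Your argument is correct. The paper offers no proof of this proposition---it is quoted from Rockafellar--Wets---so there is nothing internal to compare against; what you give is the standard textbook derivation via the Fenchel--Young inequality, its equality case, and the biconjugation identity $h^{\ast\ast}=h$ (which the paper itself records immediately before the statement, so invoking it is legitimate here). The chain of equivalences $\bdw\in\partial h(\bdf)\iff h(\bdf)+h^{\ast}(\bdw)=\langle\bdf,\bdw\rangle\iff \bdf\in\partial h^{\ast}(\bdw)$ is exactly right, and the companion identity $\partial h=(\partial h^{\ast})^{-1}$ follows either by your route (apply the result to $h^{\ast}$) or simply by noting that set-valued inversion is an involution. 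Your closing caveat is also well taken and worth keeping: Fr\'echet differentiability of $h$ gives $\partial h(\bdf)=\{\nabla h(\bdf)\}$ and hence $\bar{\bdw}_{\!h}(\bdf)=\nabla h(\bdf)$ directly, but the dual claim $\bar{\bdf}_{\!h}(\bdw)=\nabla h^{\ast}(\bdw)$ tacitly assumes the maximizer in \cref{def:conjugate} is unique and $h^{\ast}$ is differentiable at $\bdw$, which does not follow from differentiability of $h$ alone; this is a genuine looseness in the proposition as stated, though it is harmless in the paper's use of it (in \cref{thm:conjugate_equals_first_taylor} the identity is only evaluated at $\bdw=\nabla h(\bdf^{\prime})$, where the supremum is attained at $\bdf^{\prime}$).
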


Subsequently, we introduce proximal mapping and its firm nonexpansivity property, 
which would be useful for the later convergence analysis.
\begin{definition}(\cite[Proximal mapping]{be17})\label{def:proximal}
Given function $h : \mathcal{X} \rightarrow (-\infty,+\infty]$, the proximal mapping of $h$ is an operator given by 
\begin{equation*}
    \prox_{h}(\bx):= \arg\min_{\by}\left\{h(\by) + \frac{1}{2}\|\by - \bx\|^2\right\} 
\end{equation*}
for any $\bx \in \mathcal{X}$.
\end{definition}
\begin{remark}
Let $\tau>0$. If $h$ is proper, convex and \ac{l.s.c}, then $\prox_{\tau h}(\bx)$ exists uniquely. 
Moreover, it is equivalent to the resolvent of subgradient, which is 
\begin{equation}\label{eq:resolvent}
    \prox_{\tau h}(\bx) = (\Id + \tau\partial h)^{-1}(\bx), 
\end{equation}
where $\Id$ denotes identity operator (matrix) of the associated vector space. 
\end{remark}
\begin{proposition}(\cite[Firm nonexpansivity of the proximal operator]{be17}) \label{nonexpan of prox}
Let $h$ be a proper closed and convex function. Then, for and $\bx, \by \in \mathcal{X}$,
\begin{align*}
   \| \normalfont \prox_{h}(\bx) - \prox_{h}(\by)\|^2 &\le \normalfont\langle \bx - \by,\prox_{h}(\bx) - \prox_{h}(\by) \rangle, \\
    \normalfont \|\prox_{h}(\bx) - \prox_{h}(\by)\|&\le \|\bx-\by\|.
\end{align*}
\end{proposition}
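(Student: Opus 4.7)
The plan is to use the first-order optimality characterization of the proximal mapping via the subdifferential, together with the monotonicity of the subdifferential of a convex function. This is standard, but I will lay out the steps clearly.

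First I would set $\bdu := \prox_h(\bx)$ and $\bdv := \prox_h(\by)$, which are well-defined (and unique) by the remark following \Cref{def:proximal}, since $h$ is proper, closed and convex. Using the resolvent characterization \cref{eq:resolvent}, namely $\prox_{h} = (\Id + \partial h)^{-1}$, I immediately obtain
\begin{equation*}
\bx - \bdu \in \partial h(\bdu), \qquad \by - \bdv \in \partial h(\bdv).
\end{equation*}

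Next I would invoke monotonicity of the subdifferential of the convex function $h$: for any $\bdp \in \partial h(\bdu)$ and $\bdq \in \partial h(\bdv)$, one has $\langle \bdp - \bdq, \bdu - \bdv \rangle \ge 0$. Applying this with $\bdp = \bx - \bdu$ and $\bdq = \by - \bdv$ yields
\begin{equation*}
\langle (\bx - \by) - (\bdu - \bdv),\, \bdu - \bdv \rangle \ge 0,
\end{equation*}
which rearranges to $\|\bdu - \bdv\|^2 \le \langle \bx - \by, \bdu - \bdv \rangle$. This is exactly the first (firm nonexpansivity) inequality.

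For the second inequality, I would apply the Cauchy--Schwarz inequality to the right-hand side, obtaining $\|\bdu - \bdv\|^2 \le \|\bx - \by\|\,\|\bdu - \bdv\|$. If $\bdu = \bdv$ the bound is trivial; otherwise dividing by $\|\bdu - \bdv\|$ gives $\|\bdu - \bdv\| \le \|\bx - \by\|$, as required.

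No step is really an obstacle here: the only subtlety worth mentioning is that one must justify the monotonicity of $\partial h$ (which follows directly from the definition of subgradient applied twice and summed), and that the resolvent identity \cref{eq:resolvent} for $\prox_h$ is precisely what converts the proximal minimization into an inclusion amenable to this monotonicity argument.
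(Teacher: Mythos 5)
Your proof is correct and is the standard argument (optimality condition via the resolvent inclusion, monotonicity of $\partial h$, then Cauchy--Schwarz); the paper itself gives no proof of this proposition, importing it directly from the cited reference, where essentially the same reasoning appears. Nothing to fix.
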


\section{The proposed extended primal-dual algorithm framework}
\label{sec:proposed_framework}

In this section, we propose a framework of extended primal-dual algorithm for 
the nonconvex problems in \cref{eq:opt_primal}, where each component $K_j$ of $\bdK$ is a proper Fr\'echet  
differentiable nonlinear convex function in $\mathcal{X}$.  

\subsection{The algorithm framework and generated schemes}

Let $\overline{\Real}_{+} := [0, +\infty]$. Assume that $G : \mathcal{X} \rightarrow \overline{\Real}_{+}$, 
$F^{\ast} : \mathcal{Y}_K \rightarrow \overline{\Real}_{+}$ 
and $E^{\ast} : \mathcal{Y}_A \rightarrow \overline{\Real}_{+}$ are proper, convex, \ac{l.s.c} functions, and $F^{\ast}$, 
$E^{\ast}$ are themselves the convex conjugates of convex \ac{l.s.c} functions $F$, $E$, respectively. 
By Legendre--Fenchel transform in \cref{def:Lengendretransform}, the general problem in \cref{eq:opt_primal} can 
be translated into the following saddle-point problem
\begin{equation}\label{eq:mini_max_problem}
    \min_{\bdf \in \mathcal{X}}\max_{\bdu \in \mathcal{Y}_K, \bdv \in \mathcal{Y}_A} \Bigl\{\langle \bdK(\bdf), \bdu\rangle +\langle A\bdf,\bdv \rangle - F^{\ast}(\bdu)-E^{\ast}(\bdv) + G(\bdf) \Bigr\},
\end{equation}
where $\bdu$ and $\bdv$ are dual variables regarding the nonlinear and linear operators, respectively.


With the convexity of $K_j$, by \cref{def:Lengendretransform} again, we obtain 
\begin{equation}\label{eq:conjugate_kj}
    K_j(\bdf) =\max_{\bdw_j}\Bigl\{ \langle \bdf,\bdw_j\rangle - K_j^{\ast}(\bdw_j)\Bigr\}, 
\end{equation}
where $K_j^{\ast}$ is the convex conjugate of $K_j$. Let $\bar{\bdw}_j(\bdf)$ be the optimal solution 
of \cref{eq:conjugate_kj}. Then, \cref{eq:mini_max_problem} becomes 
\begin{align}\label{def:Phi}
\min_{\bdf \in \mathcal{X}}\max_{\bdu \in \mathcal{Y}_K, \bdv \in \mathcal{Y}_A} \Phi\bigl(\bdf,\bar{\bdw}(\bdf),\bdu,\bdv\bigr), 
\end{align}
where 
\begin{equation*}
\Phi\bigl(\bdf,\bar{\bdw}(\bdf),\bdu,\bdv\bigr) := \sum_{j} u_j\left\{ \langle \bdf,\bar{\bdw}_j(\bdf)\rangle -K_j^{\ast}\bigl(\bar{\bdw}_j(\bdf)\bigr)\right\} +\langle A\bdf,\bdv\rangle -F^{\ast}(\bdu) -E^{\ast}(\bdv) + G(\bdf),
\end{equation*}
and $\bar{\bdw} = [\bar{\bdw}_1, \ldots, \bar{\bdw}_J]$. 
Notice that there are many variables in \cref{def:Phi}, which is difficult to solve simultaneously. 
Thus, it is natural to fix some variables and alternatively solve the associated subproblems. 
Particularly, we need to update $\bdf$ and $\bar{\bdw}(\bdf)$ alternatively for the existing nonlinearity.  
The proposed framework of extended primal-dual algorithm is presented as the 
following \cref{algo:proposed_NLPD_algorithm}.  
\begin{algorithm}[htbp]
\caption{The proposed extended primal-dual algorithm framework}
\label{algo:proposed_NLPD_algorithm}
\begin{algorithmic}[1]
\STATE \emph{Initialize}: Given $\tau > 0$, $\sigma_{\!\!K} > 0$, $\sigma_{\!\!A} > 0$, $\theta \in [0, 1]$, and initial point $(\bdf^0, \bar{\bdw}^0, \bdu^0, \bdv^0)$ with 
$\bar{\bdw}^0 = \bar{\bdw}(\bdf^0)$. Let $\bdf_{\theta}^0 \gets \bdf^0$ and $n \gets 0$. 
\STATE \emph{Loop}: Update $(\bdf^{n+1}, \bar{\bdw}^{n+1}, \bdu^{n+1}, \bdv^{n+1})$ by 
\begin{subequations}
\begin{align}
\label{eq:update_f} \bdf^{n+1} &= \arg\min_{\bdf \in \mathcal{X}}\left\{\Phi(\bdf, \bar{\bdw}^n, \bdu^n,\bdv^n ) + \frac{1}{2\tau}\|\bdf - \bdf^{n}\|^2\right\}, \\
\label{eq:accelerate_f} \bdf_{\theta}^{n+1} &= \bdf^{n+1} + \theta(\bdf^{n+1}  - \bdf^n), \\
\label{eq:update_w} \bar{\bdw}^{n+1} &= \bar{\bdw}(\bar{\bdf}^{n+1})\quad \text{with} ~ \bar{\bdf}^{n+1} = \bdf_{\theta}^{n+1},~\bdf^{n+1}~\text{or}~\bdf^n, \\ 
\label{eq:update_u} \bdu^{n+1} &= \arg\min_{\bdu \in \mathcal{Y}_K}\left\{-\Phi(\bdf_{\theta}^{n+1}, \bar{\bdw}^{n+1}, \bdu,\bdv^n) + \frac{1}{2\sigma_{\!\!K}}\|\bdu - \bdu^{n}\|^2\right\},\\
\label{eq:update_v} \bdv^{n+1} &= \arg\min_{\bdv \in \mathcal{Y}_A}\left\{-\Phi(\bdf_{\theta}^{n+1}, \bar{\bdw}^{n+1}, \bdu^{n+1},\bdv) + \frac{1}{2\sigma_{\!\!A}}\|\bdv - \bdv^{n}\|^2\right\},\\
\label{eq:update_w_last} \bar{\bdw}^{n+1} &= \bar{\bdw}(\widetilde{\bdf}^{n+1})\quad \text{with} ~ \widetilde{\bdf}^{n+1} = \bdf_{\theta}^{n+1}~\text{or}~\bdf^{n+1}.
\end{align} 
\end{subequations}
If some given termination condition is satisfied, then \textbf{output} $\bdf^{n+1}$; 
otherwise, let $n \gets n+1$, \textbf{goto} \emph{Loop}.
\end{algorithmic}
\end{algorithm} 

Note that one can update variable $\bar{\bdw}$ by different ways, which is subsequently generated  
different extended primal-dual schemes. If the updates in \cref{eq:update_w} and \cref{eq:update_w_last} 
are the same, we just keep \cref{eq:update_w}. 

Before studying more details in \cref{algo:proposed_NLPD_algorithm}, 
we present a useful result by the following theorem. 

\begin{theorem}\label{thm:conjugate_equals_first_taylor}
For any proper Fr\'echet differentiable convex function $h(\bdf)$, suppose $\bar{\bdw}_h(\bdf^{\prime})$ is the optimal solution 
in the right side of \cref{def:biconjugate} corresponding to point $ \bdf^{\prime}$. Then 
\begin{equation}\label{eq:conjugate_taylor}
   \langle  \bdf,\bar{\bdw}_h(\bdf^{\prime})\rangle - h^{\ast}\bigl(\bar{\bdw}_h(\bdf^{\prime})\bigr)=h( \bdf^{\prime})+\nabla h( \bdf^{\prime})( \bdf- \bdf^{\prime}).
\end{equation}
\end{theorem}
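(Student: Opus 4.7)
The plan is to reduce the statement to a direct consequence of \cref{prop:inversionrule} together with biconjugacy. The key observation is that the left-hand side is exactly the value attained in the supremum defining $h^{\ast\ast}(\bdf)$ when one uses the point $\bar{\bdw}_h(\bdf')$ instead of optimizing over $\bdw$; since $h$ is proper and convex, and Fr\'echet differentiability forces continuity (hence \ac{l.s.c}-ness), one has $h^{\ast\ast}=h$, so the optimizer at $\bdf'$ gives the tight value $h(\bdf')$, while at a different point $\bdf$ it will generally only give a lower bound, and the gap should turn out to be a first-order Taylor term in $\nabla h(\bdf')$.

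First I would invoke \cref{prop:inversionrule} in the Fr\'echet differentiable case to conclude
\begin{equation*}
\bar{\bdw}_h(\bdf')=\nabla h(\bdf').
\end{equation*}
Next I would evaluate the biconjugate at $\bdf'$: since $h^{\ast\ast}(\bdf')=h(\bdf')$ and $\bar{\bdw}_h(\bdf')$ is by definition the maximizer of $\langle \bdf',\bdw\rangle - h^{\ast}(\bdw)$, substituting yields
\begin{equation*}
h(\bdf')=\langle \bdf',\bar{\bdw}_h(\bdf')\rangle - h^{\ast}\bigl(\bar{\bdw}_h(\bdf')\bigr),
\end{equation*}
which rearranges to an explicit formula for $h^{\ast}(\bar{\bdw}_h(\bdf'))=\langle \bdf',\nabla h(\bdf')\rangle - h(\bdf')$.

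Finally I would plug this expression back into the left-hand side of \cref{eq:conjugate_taylor}, giving
\begin{equation*}
\langle \bdf,\bar{\bdw}_h(\bdf')\rangle - h^{\ast}\bigl(\bar{\bdw}_h(\bdf')\bigr)
= \langle \bdf,\nabla h(\bdf')\rangle - \langle \bdf',\nabla h(\bdf')\rangle + h(\bdf')
= h(\bdf') + \nabla h(\bdf')(\bdf-\bdf'),
\end{equation*}
which is the claim.

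There is no real obstacle here; the only point requiring a small remark is the justification that the hypotheses of \cref{prop:inversionrule} (proper, convex, \ac{l.s.c}) are in force, which follows from the assumed properness and convexity of $h$ together with the continuity supplied by Fr\'echet differentiability. One could equivalently phrase the proof as: the identity $h(\bdf')+\nabla h(\bdf')(\bdf-\bdf')$ is the affine minorant of $h$ realizing the Fenchel--Young equality at $\bdf'$ with slope $\nabla h(\bdf')$, and by \cref{prop:inversionrule} this slope is precisely $\bar{\bdw}_h(\bdf')$.
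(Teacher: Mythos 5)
Your proof is correct and follows essentially the same route as the paper: both identify $\bar{\bdw}_h(\bdf')=\nabla h(\bdf')$ via \cref{prop:inversionrule} and then establish the Fenchel--Young equality $h^{\ast}\bigl(\nabla h(\bdf')\bigr)=\langle\bdf',\nabla h(\bdf')\rangle-h(\bdf')$ before substituting. The only (immaterial) difference is that you obtain that equality from the attainment of the supremum in $h^{\ast\ast}(\bdf')=h(\bdf')$ at $\bar{\bdw}_h(\bdf')$, whereas the paper applies the inversion rule a second time to show the supremum defining $h^{\ast}\bigl(\bar{\bdw}_h(\bdf')\bigr)$ is attained at $\bdf'$.
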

\begin{proof} 
Since $\bar{\bdw}_h(\bdf^{\prime})$ is the optimal solution in the right side of \cref{def:biconjugate} at $ \bdf^{\prime}$, by \cref{prop:inversionrule}, we obtain $\bar{\bdw}_h(\bdf^{\prime})=\nabla h(\bdf^{\prime})$. Taking $\bar{\bdw}_h(\bdf^{\prime})$ into \cref{def:conjugate}, then for the optimal solution $\bar{\bdf}_{\!h}\bigl(\bar{\bdw}_h(\bdf^{\prime})\bigr)$ in the right side of \cref{def:conjugate}, by \cref{prop:inversionrule} again, we obtain
\begin{equation*}
    \bar{\bdf}_{\!h}\bigl(\bar{\bdw}_h(\bdf^{\prime})\bigr) = \nabla h^{\ast}\bigl(\bar{\bdw}_h(\bdf^{\prime})\bigr)=\nabla h^{\ast}\bigl(\nabla h(\bdf^{\prime})\bigr)=\bdf^{\prime}.
\end{equation*}
Then, by \cref{def:conjugate}, and substituting the above equations into the left side of \cref{eq:conjugate_taylor} yields its right side.  
\end{proof}
Specifically, using \cref{thm:conjugate_equals_first_taylor}, and 
taking $h(\bdf)=K_j(\bdf)$, and accordingly $\bar{\bdw}_h(\bdf^{\prime}) = \bar{\bdw}_j(\bdf^{\prime})$, we obtain 
\begin{equation}\label{eq:taylor_Kj}
    \langle \bdf, \bar{\bdw}_j(\bdf^{\prime}) \rangle-K_j^{\ast}\bigl(\bar{\bdw}_j(\bdf^{\prime})\bigr)=K_j(\bdf^{\prime}) +\nabla K_j(\bdf^{\prime})(\bdf-\bdf^{\prime}).
\end{equation}
By \cref{def:Phi}, using \cref{eq:resolvent} and \cref{eq:taylor_Kj}, the update in \cref{eq:update_f} can be written as 
\begin{align*}
    \bdf^{n+1} 
    &= \arg\min_{\bdf \in \mathcal{X}}\Biggl\{ \sum_{j} u_j^n \Bigl\{ \langle \bdf,\bar{\bdw}_j(\bar{\bdf}^n)\rangle -K_j^{\ast}\bigl(\bar{\bdw}_j(\bar{\bdf}^n)\bigr)\Bigr\}+\langle A\bdf,\bdv^n \rangle  +G(\bdf)+ \frac{1}{2\tau}\|\bdf - \bdf^{n}\|^2\Biggr\}\\
&\overset{\cref{eq:taylor_Kj}}{=}\arg\min_{\bdf \in \mathcal{X}}\Biggl\{ \sum_{j} u_{j}^n \nabla K_j(\bar{\bdf}^n)\bdf + G(\bdf)+A^{\tra}\bdv^n + \frac{1}{2\tau}\|\bdf - \bdf^{n}\|^2\Biggr\}\\
&\overset{\cref{eq:resolvent}}{=} (\Id+ \tau\partial G)^{-1}\Bigl(\bdf^n - \tau[\nabla \bdK(\bar{\bdf}^n)]^{\tra}\bdu^n -\tau A^{\tra}\bdv^n \Bigl). 
\end{align*}
Similarly, the update in \cref{eq:update_u} becomes 
\begin{align*}
    \bdu^{n+1} 
        &= \arg\min_{\bdu \in \mathcal{Y}_K}\Biggl\{ -\sum_{j} u_j \Bigl\{ \langle \bdf^{n+1}_{\theta},\bar{\bdw}_j(\bar{\bdf}^{n+1})\rangle -K_j^{\ast}\bigl(\bar{\bdw}_j(\bar{\bdf}^{n+1})\bigr)\Bigr\} +F^{\ast}(\bdu) +\frac{1}{2\sigma_{\!\!K}}\|\bdu - \bdu^{n}\|^2\Biggr\}\\
    &\overset{\cref{eq:taylor_Kj}}{=}  \arg\min_{\bdu \in \mathcal{Y}_K}\left\{ -\langle \bdK(\bar{\bdf}^{n+1})+\nabla \bdK(\bar{\bdf}^{n+1})(\bdf_{\theta}^{n+1}-\bar{\bdf}^{n+1}),\bdu\rangle + F^{\ast}(\bdu)+ \frac{1}{2\sigma_{\!\!K}}\|\bdu - \bdu^{n}\|^2\right\}\\
    &\overset{\cref{eq:resolvent}}{=} (\Id + \sigma_{\!\!K}\partial F^{\ast})^{-1}\Bigl(\bdu^n + \sigma_{\!\!K} [\bdK(\bar{\bdf}^{n+1})+\nabla \bdK(\bar{\bdf}^{n+1})(\bdf_{\theta}^{n+1}-\bar{\bdf}^{n+1})]\Bigr). 
\end{align*}
Moreover, we have the update in \cref{eq:update_v} as 
\begin{align*}
     \bdv^{n+1} 
    &= \arg\min_{\bdv \in \mathcal{Y}_A}\left\{ - \langle A\bdf_{\theta}^{n+1},\bdv\rangle +E^{\ast}(\bdv) +\frac{1}{2\sigma_{\!\!A}}\|\bdv - \bdv^{n}\|^2\right\}\\
     &\overset{\cref{eq:resolvent}}{=} (\Id + \sigma_{\!\!A}\partial E^{\ast})^{-1}\bigl(\bdv^n + \sigma_{\!\!A} A\bdf_{\theta}^{n+1} \bigr). 
\end{align*}

Next, we give compact forms of the specific schemes which are generated by updating \cref{eq:update_w} 
and \cref{eq:update_w_last} in different ways.
\paragraph{I. Let $\bar{\bdf}^{n+1}=\bdf_{\theta}^{n+1}$ in \cref{eq:update_w} and $\widetilde{\bdf}^{n+1} = \bdf_{\theta}^{n+1}$ in \cref{eq:update_w_last}}
Then \cref{algo:proposed_NLPD_algorithm} reduces to the following scheme. We call it EPD-Exact algorithm.
\begin{subequations}\label{eq:algo_proposed1}
\begin{align}
\label{eq:updatef_1} \bdf^{n+1} &= (\Id + \tau\partial G)^{-1}\bigl(\bdf^n - \tau[\nabla \bdK(\bdf_{\theta}^n)]^{\tra}\bdu^n-\tau A^{\tra}\bdv^n \bigl), \\
\label{eq:acceleratexf_1} \bdf_{\theta}^{n+1} &= \bdf^{n+1} + \theta(\bdf^{n+1}- \bdf^n), \\
\label{eq:updateu_1} \bdu^{n+1} &=  (\Id + \sigma_{\!\!K}\partial F^{\ast})^{-1}\bigl(\bdu^n + \sigma_{\!\!K} \bdK(\bdf_{\theta}^{n+1})\bigr), \\
\label{eq:updatev_1} \bdv^{n+1}&= (\Id + \sigma_{\!\!A}\partial E^{\ast})^{-1}\bigl(\bdv^n + \sigma_{\!\!A} A\bdf_{\theta}^{n+1} \bigr).
\end{align} 
\end{subequations}

\paragraph{II. Let $\bar{\bdf}^{n+1}=\bdf^{n+1}$ in \cref{eq:update_w} and $\widetilde{\bdf}^{n+1} = \bdf^{n+1}$ 
in \cref{eq:update_w_last}} Then \cref{algo:proposed_NLPD_algorithm} 
becomes the following compact scheme, called EPD-Linearized algorithm.
\begin{subequations}\label{eq:algo_proposed2}
\begin{align}
\label{eq:updatef_2} \bdf^{n+1} &= (\Id + \tau\partial G)^{-1}\bigl(\bdf^n - \tau[\nabla \bdK(\bdf^n)]^{\tra}\bdu^n-\tau A^{\tra}\bdv^n\bigl), \\
\label{eq:acceleratexf_2} \bdf_{\theta}^{n+1} &= \bdf^{n+1} + \theta(\bdf^{n+1}  - \bdf^n), \\
\label{eq:updated_2} \bdu^{n+1} &=  (\Id + \sigma_{\!\!K}\partial F^{\ast})^{-1}\bigl(\bdu^n + \sigma_{\!\!K} [\bdK(\bdf^{n+1}) + \nabla \bdK(\bdf^{n+1})(\bdf_{\theta}^{n+1} - \bdf^{n+1})]\bigr) , \\
\label{eq:updatev_2} \bdv^{n+1}&= (\Id + \sigma_{\!\!A}\partial E^{\ast})^{-1}\bigl(\bdv^n + \sigma_{\!\!A} A\bdf_{\theta}^{n+1} \bigr)
\end{align} 
\end{subequations}

\paragraph{III. Let $\bar{\bdf}^{n+1}=\bdf_{\theta}^{n+1}$ in \cref{eq:update_w} 
and $\widetilde{\bdf}^{n+1} = \bdf^{n+1}$ in \cref{eq:update_w_last}} Then we get
\begin{subequations}\label{eq:Exact_NL_PDHGM}
\begin{align}
\label{eq:updatef_va1} \bdf^{n+1} &= (\Id + \tau\partial G)^{-1}\Bigl(\bdf^n - \tau[\nabla \bdK(\bdf^n)]^{\tra}\bdu^n-\tau A^{\tra}\bdv^n \Bigl), \\
\label{eq:acceleratexf_va1} \bdf_{\theta}^{n+1} &= \bdf^{n+1} + \theta(\bdf^{n+1}  - \bdf^n), \\
\label{eq:updated_va1} \bdu^{n+1} &=  (\Id + \sigma_{\!\!K}\partial F^{\ast})^{-1}\bigl(\bdu^n + \sigma_{\!\!K} \bdK(\bdf_{\theta}^{n+1})\bigr), \\
\label{eq:updatev_3} \bdv^{n+1}&= (\Id + \sigma_{\!\!A}\partial E^{\ast})^{-1}\bigl(\bdv^n + \sigma_{\!\!A} A\bdf_{\theta}^{n+1} \bigr). 
\end{align} 
\end{subequations}

\paragraph{IV. Let $\bar{\bdf}^{n+1}=\bdf^{n}$ in \cref{eq:update_w} and $\widetilde{\bdf}^{n+1} = \bdf^{n+1}$ in \cref{eq:update_w_last}} The resulting iterative scheme is   
\begin{subequations}\label{eq:Linearized_NL_PDHGM}
\begin{align}
\label{eq:updatef_va2} \bdf^{n+1} &= (\Id + \tau\partial G)^{-1}\Bigl(\bdf^n - \tau[\nabla \bdK(\bdf^n)]^{\tra}\bdu^n-\tau A^{\tra}\bdv^n\Bigl), \\
\label{eq:acceleratexf_va2} \bdf_{\theta}^{n+1} &= \bdf^{n+1} + \theta(\bdf^{n+1}  - \bdf^n), \\
\label{eq:updated_va2} \bdu^{n+1} &=  (\Id + \sigma_{\!\!K}\partial F^{\ast})^{-1}\bigl(\bdu^n + \sigma_{\!\!K} [\bdK(\bdf^n) + \nabla \bdK(\bdf^n)(\bdf_{\theta}^{n+1} - \bdf^n)]\bigr), \\
\label{eq:updatev_4} \bdv^{n+1}&= (\Id + \sigma_{\!\!A}\partial E^{\ast})^{-1}\bigl(\bdv^n + \sigma_{\!\!A} A\bdf_{\theta}^{n+1} \bigr). 
\end{align} 
\end{subequations} 
Furthermore, we have two more extensions.  

\paragraph{V. Let $\bar{\bdf}^{n+1} = \bdf^{n+1}$ in \cref{eq:update_w} and $\widetilde{\bdf}^{n+1}=\bdf_{\theta}^{n+1}$ in \cref{eq:update_w_last}} The associated compact form is translated into 
\begin{subequations}\label{eq:Exact_NL_PDHGM_2}
\begin{align}
\label{eq:updatef_va3} \bdf^{n+1} &= (\Id + \tau\partial G)^{-1}\Bigl(\bdf^n - \tau[\nabla \bdK(\bdf_{\theta}^n)]^{\tra}\bdu^n-\tau A^{\tra}\bdv^n \Bigl), \\
\label{eq:acceleratexf_va3} \bdf_{\theta}^{n+1} &= \bdf^{n+1} + \theta(\bdf^{n+1}  - \bdf^n), \\
\label{eq:updated_va3} \bdu^{n+1} &=  (\Id + \sigma_{\!\!K}\partial F^{\ast})^{-1}\bigl(\bdu^n + \sigma_{\!\!K} [\bdK(\bdf^{n+1}) + \nabla \bdK(\bdf^{n+1})(\bdf_{\theta}^{n+1} - \bdf^{n+1})]\bigr), \\
\label{eq:updatev_5} \bdv^{n+1}&= (\Id + \sigma_{\!\!A}\partial E^{\ast})^{-1}\bigl(\bdv^n + \sigma_{\!\!A} A\bdf_{\theta}^{n+1} \bigr). 
\end{align} 
\end{subequations}

\paragraph{VI. Let $\bar{\bdf}^{n+1}=\bdf^{n}$ in \cref{eq:update_w} and $\widetilde{\bdf}^{n+1} = \bdf_{\theta}^{n+1}$ in \cref{eq:update_w_last}} The generated iterative scheme is formulated as 
\begin{subequations}\label{eq:Linearized_NL_PDHGM_2}
\begin{align}
\label{eq:updatef_va6} \bdf^{n+1} &= (\Id + \tau\partial G)^{-1}\Bigl(\bdf^n - \tau[\nabla \bdK(\bdf_{\theta}^n)]^{\tra}\bdu^n-\tau A^{\tra}\bdv^n\Bigl), \\
\label{eq:acceleratexf_va6} \bdf_{\theta}^{n+1} &= \bdf^{n+1} + \theta(\bdf^{n+1}  - \bdf^n), \\
\label{eq:updated_va6} \bdu^{n+1} &=  (\Id + \sigma_{\!\!K}\partial F^{\ast})^{-1}\bigl(\bdu^n + \sigma_{\!\!K} [\bdK(\bdf^n) + \nabla \bdK(\bdf^n)(\bdf_{\theta}^{n+1} - \bdf^n)]\bigr), \\
\label{eq:updatev_6} \bdv^{n+1}&= (\Id + \sigma_{\!\!A}\partial E^{\ast})^{-1}\bigl(\bdv^n + \sigma_{\!\!A} A\bdf_{\theta}^{n+1} \bigr). 
\end{align} 
\end{subequations}

\subsection{Relationship to existing algorithms}

In this section, we will establish the relationship between the proposed algorithm framework and existing algorithms. 

\paragraph{Primal-dual algorithm for convex problem}
If $\bdK$ becomes linear, we execute the above derivation again, and it is easy to validate 
that the above framework and generated schemes are reduced to the primal-dual 
algorithm for the convex problem as in \cref{eq:convex_opt_primal} (or Chambolle--Pock algorithm in \cite{chpo11}). 
More precisely, we let $\bdK(\bdf) = \bdK\bdf$ in this paragraph, 
then $\nabla \bdK(\bdf) = \bdK$, and $\Phi\bigl(\bdf,\bar{\bdw}(\bdf),\bdu,\bdv\bigr)$ is rewritten as 
\[
\widehat{\Phi}(\bdf, \bdu, \bdv) := 
\langle \bdK\bdf, \bdu\rangle +\langle A\bdf,\bdv \rangle - F^{\ast}(\bdu)-E^{\ast}(\bdv) + G(\bdf). 
\]
\Cref{algo:proposed_NLPD_algorithm} can be reformulated as the following \cref{algo:PD_algorithm}. 
\begin{algorithm}[htpb]
\caption{The primal-dual algorithm for convex problem}
\label{algo:PD_algorithm}
\begin{algorithmic}[1]
\STATE \emph{Initialize}: Given $\tau > 0$, $\sigma_{\!\!K} > 0$, $\sigma_{\!\!A} > 0$, $\theta \in [0, 1]$, and initial point $(\bdf^0, \bdu^0, \bdv^0)$. 
Let $n \gets 0$. 
\STATE \emph{Loop}: Update $(\bdf^{n+1}, \bdu^{n+1}, \bdv^{n+1})$ by 
\begin{subequations}
\label{eq:PD_algorithm_steps}
\begin{align}
\label{eq:update_f_PD} \bdf^{n+1} &= \arg\min_{\bdf \in \mathcal{X}}\left\{\widehat{\Phi}(\bdf, \bdu^n, \bdv^n ) + \frac{1}{2\tau}\|\bdf - \bdf^{n}\|^2\right\}, \\
\label{eq:accelerate_f_PD} \bdf_{\theta}^{n+1} &= \bdf^{n+1} + \theta(\bdf^{n+1}  - \bdf^n), \\
\label{eq:update_u_PD} \bdu^{n+1} &= \arg\min_{\bdu \in \mathcal{Y}_K}\left\{-\widehat{\Phi}(\bdf_{\theta}^{n+1}, \bdu, \bdv^n) + \frac{1}{2\sigma_{\!\!K}}\|\bdu - \bdu^{n}\|^2\right\},\\
\label{eq:update_v_PD} \bdv^{n+1} &= \arg\min_{\bdv \in \mathcal{Y}_A}\left\{-\widehat{\Phi}(\bdf_{\theta}^{n+1}, \bdu^{n+1}, \bdv) + \frac{1}{2\sigma_{\!\!A}}\|\bdv - \bdv^{n}\|^2\right\}. 
\end{align} 
\end{subequations}
If some given termination condition is satisfied, then \textbf{output} $\bdf^{n+1}$; 
otherwise, let $n \gets n+1$, \textbf{goto} \emph{Loop}.
\end{algorithmic}
\end{algorithm} 

Then, all of the generated schemes I--VI are translated into the following unique one. 
\begin{subequations}\label{eq:compact_form_PD}
\begin{align}
\label{eq:updatef_va_PD} \bdf^{n+1} &= (\Id + \tau\partial G)^{-1}\bigl(\bdf^n - \tau\bdK^{\tra}\bdu^n-\tau A^{\tra}\bdv^n\bigl), \\
\label{eq:acceleratexf_va_PD} \bdf_{\theta}^{n+1} &= \bdf^{n+1} + \theta(\bdf^{n+1}  - \bdf^n), \\
\label{eq:updated_va_PD} \bdu^{n+1} &=  (\Id + \sigma_{\!\!K}\partial F^{\ast})^{-1}\bigl(\bdu^n + \sigma_{\!\!K} \bdK\bdf_{\theta}^{n+1}\bigr), \\
\label{eq:updatev_PD} \bdv^{n+1}&= (\Id + \sigma_{\!\!A}\partial E^{\ast})^{-1}\bigl(\bdv^n + \sigma_{\!\!A} A\bdf_{\theta}^{n+1} \bigr). 
\end{align} 
\end{subequations} 

Hence, the proposed algorithm framework can be seen as an extension of the primal-dual algorithm for convex problem. 

\paragraph{Exact/Linearized nonlinear \ac{PDHGM}} 
It is easy to observe that the exact and linearized nonlinear \ac{PDHGM} (i.e., 
Exact NL-PDHGM and Linearized NL-PDHGM) in \cite{va14} correspond to 
schemes \cref{eq:Exact_NL_PDHGM} and \cref{eq:Linearized_NL_PDHGM} respectively. 
It means that we have incorporated these two NL-PDHGM schemes into the proposed algorithm framework. 
By \cref{eq:conjugate_taylor} in \cref{thm:conjugate_equals_first_taylor}, we 
establish the relationship between the biconjugate formula and the first-order Taylor expension of the convex function itself. 
Equivalently, we provide the complete mathematical explanation for the two iterative schemes, which is easier to 
understand than the way by using the first-order Taylor expension as the linear approximation in \cite{va14}. 

Furthermore, the proposed algorithm framework contains more iterative schemes other than the two above, which 
provides more alternatives to solve the nonconvex optimization problem in \cref{eq:opt_primal}. 

\paragraph{Nonconvex primal-dual algorithm} 
A nonconvex primal-dual algorithm, namely, nonconvex Chambolle--Pock algorithm, 
is proposed to solve the image reconstruction model in spectral \ac{CT} with 
various convex functionals including \ac{TV} norm 
as constraint rather than regularization term \cite{chpa18,chpan21}, but which can be also extended as an alternative to solving 
the model \cref{eq:opt_primal}. Here we do not state that algorithm itself in detail, 
but just focus on the way to linearize the nonlinear $\bdK(\bdf)$ at related step. 

In \cite[eq. (10)]{chpan21} or \cite[Algorithm 2]{chpa18}, $\bdK(\bdf)$ is linearized at  
current point $\bdf^n$ equivalently by  
\begin{equation}\label{eq:NCPD_pan}
\bdK(\bdf) \approx \bdK(\bdf^n) + \nabla \bdK(0)(\bdf - \bdf^n), 
\end{equation} 
where $\nabla \bdK(0)$ is definitely the constant matrix $\mathcal{H}$ in that algorithm. 
Thus, in what follows we can obtain the corresponding general iterative scheme as  
\begin{subequations}\label{eq:NCPD}
\begin{align}
\label{eq:updatef_va_NCPD} \bdf^{n+1} &= (\Id + \tau\partial G)^{-1}\Bigl(\bdf^n - \tau[\nabla \bdK(0)]^{\tra}\bdu^n-\tau A^{\tra}\bdv^n \Bigl), \\
\label{eq:acceleratexf_va_NCPD} \bdf_{\theta}^{n+1} &= \bdf^{n+1} + \theta(\bdf^{n+1}  - \bdf^n), \\
\label{eq:updated_va_NCPD} \bdu^{n+1} &=  (\Id + \sigma_{\!\!K}\partial F^{\ast})^{-1}\bigl(\bdu^n + \sigma_{\!\!K} [\bdK(\bdf^n) + \nabla \bdK(0)(\bdf_{\theta}^{n+1} - \bdf^n)]\bigr), \\
\label{eq:updatev_NCPD} \bdv^{n+1}&= (\Id + \sigma_{\!\!A}\partial E^{\ast})^{-1}\bigl(\bdv^n + \sigma_{\!\!A} A\bdf_{\theta}^{n+1} \bigr). 
\end{align} 
\end{subequations}

Remark that the linearization in \cref{eq:NCPD_pan} is not exactly the first-order Taylor expansion of $\bdK(\bdf)$ at $\bdf^n$, 
which applies $\nabla \bdK(0)$ instead of gradient $\nabla \bdK(\bdf^n)$. 
The accuracy of the linear approximation in \cref{eq:NCPD_pan} is only $O(\|\bdf - \bdf^n\|)$, which carries with much 
lower accuracy than $O(\|\bdf - \bdf^n\|^2)$ by the first-order Taylor expansion. However, if $\bdK$ is 
much close to a linear operator, such approximation would be effective. 


\paragraph{Nonconvex \ac{ADMM}} The authors proposed a nonconvex \ac{ADMM} in \cite[Algorithm 1]{BaSi21} to solve 
the optimization problem as 
\begin{equation} \label{eq:opt_primal_admm}
\min_{\bdf, \by} \bigl\{F_1(\bdf) + F_2(\by)\bigr\},  \quad \text{s.t. $A\bdf + B\by = \bdc$}, 
\end{equation}   
where $F_1$ and $F_2$ are potentially nonconvex and/or nondifferentiable. 

As an alternative, we can use the nonconvex \ac{ADMM} to solve model \cref{eq:opt_primal} explicitly. 
To proceed, we need to introduce one auxiliary variable $\by = A\bdf$, and then model \cref{eq:opt_primal} can be rewritten as 
\begin{equation} \label{eq:opt_primal_admm_2}
\min_{\bdf, \by} \bigl\{\underbrace{(F\circ\bdK)(\bdf) + G(\bdf)}_{F_1(\bdf)} + \underbrace{E(\by)}_{F_2(\by)}\bigr\},  \quad \text{s.t. $A\bdf - \by = 0$}, 
\end{equation}    
where assume that $F\circ\bdK$ is differentiable but nonconvex, and $G$ and $E$ are convex but nondifferentiable. 
Then, the nonconvex \ac{ADMM} for problem \cref{eq:opt_primal_admm_2} is given 
as the following \cref{algo:nonconvex_ADMM_algorithm}. 
\begin{algorithm}[htbp]
\caption{The nonconvex \ac{ADMM} for the model in \cref{eq:opt_primal}}
\label{algo:nonconvex_ADMM_algorithm}
\begin{algorithmic}[1]
\STATE \emph{Initialize}: Given $\tau > 0$, $\sigma > 0$, initial point $(\bdf^0, \by^0, \bz^0)$, and 
positive matrices $P_0$, $P_1$ and $P_2$. Let $n \gets 0$. 
\STATE \emph{Loop}: Update $(\bdf^{n+1}, \by^{n+1}, \bz^{n+1})$ by 
\begin{subequations}
\label{eq:NCADMM_algorithm_steps}
\begin{align}
\label{eq:NCADMM_update_f_PD} \bdf^{n+1} &= \arg\min_{\bdf}\Bigl\{G(\bdf) + \langle\bdf, \nabla(F\circ\bdK)(\bdf^n) + A^{\tra}\bz^n \rangle 
\\ 
\nonumber&\hspace{20mm}+ \frac{1}{2}\|A\bdf - \by^n\|^2_{P_0} + \frac{1}{2\tau}\|\bdf - \bdf^{n}\|^2_{P_1}\Bigr\}, \\
\label{eq:NCADMM_update_u_PD} \by^{n+1} &= \arg\min_{\by}\left\{E(\by) - \langle\by, \bz^n\rangle + \frac{1}{2}\|A\bdf^{n+1} - \by\|^2_{P_0} + \frac{1}{2\sigma}\|\by - \by^{n}\|^2_{P_2}\right\},\\
\label{eq:NCADMM_update_v_PD} \bz^{n+1} &= \bz^n + A\bdf^{n+1} - \by^{n+1}.  
\end{align} 
\end{subequations}
If some given termination condition is satisfied, then \textbf{output} $\bdf^{n+1}$; 
otherwise, let $n \gets n+1$, \textbf{goto} \emph{Loop}.
\end{algorithmic}
\end{algorithm} 

Note that the auxiliary variables are often required to introduce in order to use 
nonconvex \ac{ADMM}. Since $F\circ\bdK$ is a nonlinear differentiable function, 
linear approximation also needs to be used directly and taken 
as its first-order Taylor expansion at current point $\bdf^n$ in \cref{eq:NCADMM_update_f_PD}. 
Moreover, the inner iteration is probably required to solve the subproblem 
in \cref{eq:NCADMM_update_f_PD} for the existence of $A$ in the third term of the objective function.

\section{Convergence analysis}
\label{sec:convergence_analysis} 
Without loss of generality, we mainly present the convergence analysis for iterative 
schemes \cref{eq:algo_proposed1} and \cref{eq:algo_proposed2}. 
For ease of analyzing algorithm convergence, we take acceleration parameter $\theta=1$ 
in \cref{algo:proposed_NLPD_algorithm}. We assume that problem \cref{eq:mini_max_problem} 
has at least one solution $\widehat{\bdbeta} := [\whf^{\tra},\whu^{\tra},\whv^{\tra}]^{\tra}$ such that 
\begin{equation}\label{eq:optimal_conditions}
0\in H(\widehat{\bdbeta}):=\begin{bmatrix}
 \partial G(\whf)+[\nabla \bdK(\whf)]^{\tra} \whu + A^{\tra}\whv \\
\partial F^{*}(\whu) - \bdK(\whf)\\
\partial E^{\ast}(\whv)-A\whf
\end{bmatrix}. 
\end{equation}
By the definition and property of proximal mapping, we can rewrite the updates in \cref{eq:algo_proposed1} as 
\begin{align}
\label{eq:update_1_equal} 0 \in 
\begin{bmatrix}
 \partial G(\bdf^{n+1}) + [\nabla \bdK(\bdf_{\theta}^n)]^{\tra}\bdu^n + A^{\tra} \bdv^n + \tau^{-1}(\bdf^{n+1} -\bdf^n) \\
\partial  F^{\ast}(\bdu^{n+1}) - \bdK(\bdf_{\theta}^{n+1})  + \sigma_{\!\!K}^{-1}(\bdu^{n+1} - \bdu^n)\\
\partial  E^{\ast}(\bdv^{n+1}) - A\bdf_{\theta}^{n+1}  + \sigma_{\!\!A}^{-1}(\bdv^{n+1} - \bdv^n)
\end{bmatrix},
\end{align} 
and the updates in \cref{eq:algo_proposed2} as 
\begin{align}
\label{eq:update_2_equal} 0 \in 
\begin{bmatrix}
\partial G(\bdf^{n+1}) + [\nabla \bdK(\bdf^n)]^{\tra}\bdu^n + A^{\tra} \bdv^n + \tau^{-1} (\bdf^{n+1} - \bdf^n) \\ 
\partial F^{\ast}(\bdu^{n+1}) - \bdK(\bdf^{n+1}) - \nabla \bdK(\bdf^{n+1})(\bdf_{\theta}^{n+1} - \bdf^{n+1})+ \sigma_{\!\!K}^{-1}  (\bdu^{n+1} - \bdu^n)\\
\partial  E^{\ast}(\bdv^{n+1}) - A\bdf_{\theta}^{n+1}  + \sigma_{\!\!A}^{-1}(\bdv^{n+1} - \bdv^n)
\end{bmatrix}.
\end{align} 
Let $\bdbeta^n := [{\bdf^n}^{\tra}, {\bdu^n}^{\tra}, {\bdv^n}^{\tra}]^{\tra}$ and 
\begin{equation}\label{eq:Mf}
 M(\bdf) :=\begin{bmatrix}
\tau^{-1} \Id & -[\nabla \bdK(\bdf)]^{\tra}& -A^{\tra} \\
- \nabla \bdK(\bdf) & \sigma_{\!\!K}^{-1}\Id & 0 \\
-A &0& \sigma_{\!\!A}^{-1}\Id
\end{bmatrix}.
\end{equation}
Then we respectively simplify the above update formulas in \cref{eq:update_1_equal} and \cref{eq:update_2_equal} as  
\begin{equation}\label{eq:update_1_simplify}
    0 \in \wtH^{\one}_{n+1}(\bdbeta^{n+1})+M_{n+1}^{\one}(\bdbeta^{n+1}-\bdbeta^{n})
\end{equation}
and
\begin{equation}\label{eq:update_2_simplify}
\quad 0 \in \wtH^{\two}_{n+1}(\bdbeta^{n+1})+M^{\two}_{n+1}(\bdbeta^{n+1}-\bdbeta^{n}),
\end{equation}
where $M_{n+1}^{\one} := M(\bdf_{\theta}^n)$, $M_{n+1}^{\two} := M(\bdf^n)$, 
\begin{equation*}
\wtH^{\one}_{n+1}(\bdbeta^{n+1}) :=
 \begin{bmatrix}
 \partial G(\bdf^{n+1})+\left[\nabla \bdK(\bdf_{\theta}^{n})\right]^{\tra} \bdu^{n+1} +A^{\tra}\bdv^{n+1}\\
\partial F^{*}(\bdu^{n+1})-\bdK(\bdf_{\theta}^{n+1}) + \nabla \bdK(\bdf_{\theta}^n)(\bdf^{n+1}-\bdf^n) \\
\partial E^{*}(\bdv^{n+1})-A\bdf_{\theta}^{n+1} + A(\bdf^{n+1}-\bdf^n)
\end{bmatrix},
\end{equation*} 
and
\begin{equation*}
\wtH^{\two}_{n+1}(\bdbeta^{n+1}) := \\ \begin{bmatrix}
 \partial G(\bdf^{n+1})+\left[\nabla \bdK(\bdf^{n})\right]^{\tra} \bdu^{n+1}+A^{\tra}\bdv^{n+1}\\
 \partial F^{*}(\bdu^{n+1})-\bdK(\bdf^{n+1})-
\nabla \bdK(\bdf^{n+1})(\bdf_{\theta}^{n+1}-\bdf^{n+1})+\nabla \bdK({\bdf}^{n})(\bdf^{n+1}-\bdf^{n})\\
\partial E^{*}(\bdv^{n+1})-A\bdf_{\theta}^{n+1} + A(\bdf^{n+1}-\bdf^n)
\end{bmatrix}. 
\end{equation*}

Subsequently, as a representative,  we just give the convergence analysis for \cref{eq:update_1_simplify} and \cref{eq:update_2_simplify}, 
which represent schemes \cref{eq:algo_proposed1} and \cref{eq:algo_proposed2} respectively. 

\subsection{Basic assumption and result}\label{subsec:Assumptions}

Here we introduce some local assumptions. Before this, we define a neighborhood of $\widehat{\bdbeta}$ as 
\begin{equation*}
    \mathcal{O}(\rho_{\bdf},\rho_{\bdu},\rho_{\bdv}):=\mathcal{B}_{\whf}(\rho_{\bdf}) \times \mathcal{B}_{\whu}(\rho_{\bdu})\times \mathcal{B}_{\whv}(\rho_{\bdv}),
\end{equation*}
where $\mathcal{B}_{\whf}(r) := \bigl\{ \bdf\in\mathcal{X}  : \|\bdf-\whf\| \le r \bigr\}$,
and $\mathcal{B}_{\whu}(r)$, $\mathcal{B}_{\whv}(r)$ are defined in the same way. 

\begin{assumption}(Strongly monotone of $\partial F^{\ast}$)\label{ass:monotone_of_F}
The set-valued mapping $\partial F^{\ast}$ is $\gamma_{F^{\ast}}$-strongly monotone 
at $\whu$ in $\mathcal{B}_{\whu}(\rho_{\bdu})$, which holds 
\begin{equation*}
    \langle \partial F^{\ast}(\bdu)-\partial F^{\ast}(\whu), \bdu-\whu \rangle \ge \gamma_{F^{\ast}} \|\bdu-\whu\|^2 
\end{equation*}
for any $\bdu\in\mathcal{B}_{\whu}(\rho_{\bdu})$. 
\end{assumption} 

\begin{remark}\label{rem:strongly_monotone}
Here we only need $\partial F^{\ast}$ rather than both $\partial F^{\ast}$ and $\partial G$ to be strongly monotone, which implies  
a weaker assumption than \cite[Assumption 3.3]{accle_Val19}. 
\end{remark}

\begin{assumption}(Locally Lipschitz of $\nabla \bdK$)\label{ass:locally_Lipschitz}
Let $\bdK: \mathcal{X} \rightarrow \mathcal{Y}_K$ be Fr\'echet differentiable, and there are some $L \geq 0$ such that
\begin{equation}\label{ineq: locally Lipschitz}
  \|\nabla \bdK(\bdf)-\nabla \bdK(\bx)\| \leq L\|\bdf-\bx\| 
\end{equation}
for any $\bdf, \bx \in \mathcal{B}_{\whf}(\rho_{\bdf})$.
\end{assumption}

In addition, we require a restriction on the nonlinearity of $\bdK$. 

\begin{assumption}(Nonlinearity restriction of $\bdK$)\label{ass:nonlinear_restrict}
 Assume that $\whu\neq0$. For any $\bdf, \bx \in \mathcal{B}_{\whf}(\rho_{\bdf})$, $\bdu\in\mathcal{B}_{\whu}(\rho_{\bdu})$, there are some $\gamma_{1} \in \mathbb{R}$ and $\lambda_1 \geq 0,$ such that 
\begin{multline*}
     \langle[\nabla \bdK(\bx)-\nabla \bdK(\whf)] (\bdf-\whf), \whu \rangle +\langle \bdK(\whf) - \bdK(\bdf)-\nabla \bdK(\bdf)(\whf-\bdf), \bdu-\whu\rangle \notag 
\\ \geq -\gamma_{1}\|\bdu-\whu\|^{2} - \lambda_1\|\bdf-\bx\|^{2}.
\end{multline*}
\end{assumption}
\begin{remark} 
If $\bdK$ is linear, the above inequality holds explicitly for 
any $\gamma_1\ge0$ and $\lambda_1\ge0$. In fact, \cref{ass:nonlinear_restrict} restricts 
operator $\bdK$ not too far from linear operator, in other words, whose second-order remainder is sufficiently small. 
\end{remark}
\begin{remark}\label{remark for ass3}
If $\whu=0$, for any $\bx=\bdf \in \mathcal{B}_{\whf}(\rho_{\bdf})$, we have
\begin{align*}
   \langle \bdK(\whf) - \bdK(\bdf)-\nabla \bdK(\bdf)(\whf-\bdf), \bdu\rangle  \geq -\gamma_{1}\|\bdu\|^{2}. 
\end{align*}
Now we fix $\bdf$ and let $\bdu^{\prime}=\bdK(\whf) - \bdK(\bdf)-\nabla \bdK(\bdf)(\whf-\bdf) \neq 0$. 
We can take $\bdu=c \bdu^{\prime}$ for $0 < c < c_1$ so that $\bdu\in\mathcal{B}_{\whu}(\rho_{\bdu})$. 
Substituting them into the above inequality yields $ 1 \le \gamma_1 c$. 
However we can always take $c$ sufficiently small to make this inequality invalid. 
So this is the reason why we need to restrict $\whu\neq0$ in \cref{ass:nonlinear_restrict}. 
\end{remark}

The following theorem in \cite{accle_Val19} would be useful for our convergence analysis. 

\begin{theorem}\label{thm:basis convergence}(\cite{accle_Val19}) 
In a Hilbert space $\mathcal{H}$, let $M_{n+1}\in\mathcal{L}(\mathcal{H},\mathcal{H})$, suppose 
\begin{equation*}
    0 \in \wtH_{n+1}(\bdbeta^{n+1})+M_{n+1}(\bdbeta^{n+1}-\bdbeta^{n})
\end{equation*}
is solvable for iterates $\{\bdbeta^n\}_{n\in\mathbb{N}}$. If $M_{n+1}$ is symmetric positive semidefinite and
\begin{equation}\label{inq:descent estimate on fixed step}
    \begin{aligned}
\langle\wtH_{n+1}(\bdbeta^{n+1}), \bdbeta^{n+1}-\widehat{\bdbeta}\rangle- \frac{1}{2}\|\bdbeta^{n+1}-\widehat{\bdbeta}\|_{M_{n+2}-M_{n+1}}^{2} 
+\frac{1}{2}\|\bdbeta^{n+1}-\bdbeta^{n}\|_{M_{n+1}}^{2} \ge \mathcal{V}_{n+1}
\end{aligned}
\end{equation}
for a fixed $n\in \mathbb{N}$, some $\widehat{\bdbeta}\in \mathcal{H}$ and $\mathcal{V}_{n+1}\in\mathbb{R}$, then 
\begin{align}\label{inq:descent estimate for neighbor step}
    \frac{1}{2}\|\bdbeta^{n+1}-\widehat{\bdbeta}\|_{M_{n+2}}^{2}+\mathcal{V}_{n+1} \leq \frac{1}{2}\|\bdbeta^{n}-\widehat{\bdbeta}\|_{M_{n+1}}^{2}.
\end{align}
Moreover, if \cref{inq:descent estimate for neighbor step} holds for all $0\le n\le N$ ($N \geq 1$), then we have 
\begin{align}\label{inq: N steps descent estimate}
    \frac{1}{2}\|\bdbeta^{N+1}-\widehat{\bdbeta}\|_{M_{N+2}}^{2}+\sum_{n=0}^{N} \mathcal{V}_{n+1}  \leq \frac{1}{2}\|\bdbeta^{0}-\widehat{\bdbeta}\|_{M_{1}}^{2}.
\end{align}
\end{theorem}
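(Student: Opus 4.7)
The plan is to reduce the statement to a single chain of equalities and one inequality involving weighted norms, using the variational-inequality interpretation of the inclusion together with a three-point identity for the symmetric semidefinite form $\langle \cdot,\cdot\rangle_{M_{n+1}}$.

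First, I would read the inclusion $0 \in \wtH_{n+1}(\bdbeta^{n+1}) + M_{n+1}(\bdbeta^{n+1} - \bdbeta^n)$ as the assertion that the element $\xi_{n+1} := -M_{n+1}(\bdbeta^{n+1} - \bdbeta^n)$ belongs to the set $\wtH_{n+1}(\bdbeta^{n+1})$. The hypothesis \cref{inq:descent estimate on fixed step} is then taken with this particular representative; testing against $\bdbeta^{n+1} - \widehat{\bdbeta}$ converts the first term into $-\langle M_{n+1}(\bdbeta^{n+1} - \bdbeta^n), \bdbeta^{n+1} - \widehat{\bdbeta}\rangle$, which is a purely quadratic quantity in the $M_{n+1}$-inner product.

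Next, I would apply the three-point polarization identity $\langle a-b, c-a\rangle_{M_{n+1}} = \tfrac{1}{2}\|b-c\|_{M_{n+1}}^2 - \tfrac{1}{2}\|a-c\|_{M_{n+1}}^2 - \tfrac{1}{2}\|a-b\|_{M_{n+1}}^2$ (valid because $M_{n+1}$ is symmetric positive semidefinite) with $a = \bdbeta^{n+1}$, $b = \bdbeta^n$, $c = \widehat{\bdbeta}$. The term $+\tfrac{1}{2}\|\bdbeta^{n+1}-\bdbeta^n\|_{M_{n+1}}^2$ already present in the hypothesis cancels the corresponding $-\tfrac{1}{2}\|\bdbeta^{n+1}-\bdbeta^n\|_{M_{n+1}}^2$ produced by the identity, leaving the clean pair $\tfrac{1}{2}\|\bdbeta^n-\widehat{\bdbeta}\|_{M_{n+1}}^2 - \tfrac{1}{2}\|\bdbeta^{n+1}-\widehat{\bdbeta}\|_{M_{n+1}}^2$. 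Finally, bundling the remaining term $-\tfrac{1}{2}\|\bdbeta^{n+1}-\widehat{\bdbeta}\|_{M_{n+2}-M_{n+1}}^2$ with $-\tfrac{1}{2}\|\bdbeta^{n+1}-\widehat{\bdbeta}\|_{M_{n+1}}^2$ collapses to $-\tfrac{1}{2}\|\bdbeta^{n+1}-\widehat{\bdbeta}\|_{M_{n+2}}^2$ by linearity of the seminorm in the metric, which is exactly \cref{inq:descent estimate for neighbor step}.

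The $N$-step bound \cref{inq: N steps descent estimate} follows by a routine telescoping: summing \cref{inq:descent estimate for neighbor step} from $n=0$ to $N$, each intermediate quantity $\tfrac{1}{2}\|\bdbeta^{n}-\widehat{\bdbeta}\|_{M_{n+1}}^{2}$ cancels across consecutive estimates, leaving only the boundary terms together with the accumulated residuals $\sum_{n=0}^{N}\mathcal{V}_{n+1}$. The only conceptual subtlety is the consistent choice of subgradient representative throughout (namely $\xi_{n+1}$ above), which is forced by the inclusion; everything else is algebraic manipulation in the $M_{n+1}$-inner product, and the main obstacle — if any — is keeping the index bookkeeping between $M_{n+1}$ and $M_{n+2}$ straight so that the identity telescopes correctly.
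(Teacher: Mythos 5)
Your proof is correct and is essentially the argument of the cited source \cite{accle_Val19}; the paper itself states this theorem without reproducing a proof. Substituting the representative $-M_{n+1}(\bdbeta^{n+1}-\bdbeta^{n})$ forced by the inclusion, applying the three-point identity in the $M_{n+1}$-seminorm so that the $\frac{1}{2}\|\bdbeta^{n+1}-\bdbeta^{n}\|_{M_{n+1}}^{2}$ terms cancel, merging $-\frac{1}{2}\|\cdot\|_{M_{n+1}}^{2}-\frac{1}{2}\|\cdot\|_{M_{n+2}-M_{n+1}}^{2}$ into $-\frac{1}{2}\|\cdot\|_{M_{n+2}}^{2}$, and telescoping are exactly the intended steps.
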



\subsection{Convergence results}\label{sec:convergent_analysis}

Here we present convergence results for the proposed schemes. As we can see, 
 \cref{thm:basis convergence} requires that a local metric is constructed by $M_{n+1}$ and the descent estimate inequality \cref{inq:descent estimate on fixed step} holds. To this end, 
we first give the following lemma. 
\begin{lemma}\label{lem:M_self_adjoint}
Let $0< s, \kappa<1$, $C_{\!K} = \sup_{\mathcal{B}_{\whf}(\rho_{\bdf})} \|\nabla \bdK(\bdf)\|$ and 
\begin{align*}
    B_1(\bdf) &:= \diag\biggl(\frac{\kappa}{\tau} \Id, \frac{1}{\sigma_{\!\!K}}\Id-\frac{\tau}{s(1-\kappa)}\nabla \bdK(\bdf)\nabla \bdK(\bdf)^{\tra}, \frac{1}{\sigma_{\!\!A}}\Id-\frac{\tau}{(1-s)(1-\kappa)}A A^{\tra}\biggr),\\
    B_2(\bdf) &:= \diag\biggl(\frac{1}{\tau}\Id-\frac{\sigma_{\!\!K}}{1-\kappa}\nabla \bdK(\bdf)^{\tra}\nabla \bdK(\bdf) - \frac{\sigma_{\!\!A}}{1-\kappa} A^{\tra}A, 
\frac{\kappa}{\sigma_{\!\!K}} \Id, \frac{\kappa}{\sigma_{\!\!A}} \Id\biggr).
\end{align*}
Assume that step sizes $\tau$, $\sigma_{\!\!K}$, and $\sigma_{\!\!A}$ satisfy
\begin{align}\label{eq:step_assumption}
   \tau\sigma_{\!\!K} C_{\!K}^2 < s(1-\kappa),\quad  \tau\sigma_{\!\!A} \|A\|^2 < (1-s)(1-\kappa), 
\end{align} 
then $M(\bdf)$ defined in \cref{eq:Mf} is symmetric positive semidefinite such that 
\[
M(\bdf) \succeq B_1(\bdf) \succeq 0 \quad\text{and \quad} M(\bdf) \succeq B_2(\bdf) \succeq 0
\]
for any $\bdf \in \mathcal{B}_{\whf}(\rho_{\bdf})$.  

\end{lemma}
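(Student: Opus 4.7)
The plan is to verify symmetry of $M(\bdf)$ (which is immediate from its block structure in \cref{eq:Mf}) and then establish the two dominance relations $M(\bdf) \succeq B_1(\bdf) \succeq 0$ and $M(\bdf) \succeq B_2(\bdf) \succeq 0$ by applying Young's inequality to suitably decomposed block forms. The step-size conditions in \cref{eq:step_assumption} will be used precisely to ensure that the residual diagonal blocks in $B_1$ and $B_2$ are themselves positive semidefinite.

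First I would show $M(\bdf) \succeq B_1(\bdf)$. Splitting the top-left entry of $M(\bdf) - B_1(\bdf)$, which equals $(1-\kappa)/\tau\,\Id$, as $s(1-\kappa)/\tau\,\Id + (1-s)(1-\kappa)/\tau\,\Id$, I would decompose $M(\bdf)-B_1(\bdf)$ into the sum of two symmetric block matrices of the generic form
\begin{equation*}
\begin{bmatrix} \alpha \Id & -C^{\tra} \\ -C & \alpha^{-1} C C^{\tra}\end{bmatrix}
\quad\text{with}\quad C \in \{\nabla\bdK(\bdf),\,A\}
\end{equation*}
(embedded in the full $3\times 3$ block structure with zeros elsewhere). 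The quadratic form of such a block on $(\bx,\by)$ reduces to $\alpha\|\bx\|^2 - 2\langle C\bx,\by\rangle + \alpha^{-1}\|C^{\tra}\by\|^2$, which is nonnegative by Cauchy--Schwarz together with $2ab \le \alpha a^2 + \alpha^{-1}b^2$. Hence $M(\bdf) - B_1(\bdf) \succeq 0$. To conclude $B_1(\bdf) \succeq 0$ I would invoke $\|\nabla\bdK(\bdf)\nabla\bdK(\bdf)^{\tra}\| \le C_{\!K}^2$ and $\|AA^{\tra}\| \le \|A\|^2$, which combined with \cref{eq:step_assumption} yield that the second and third diagonal blocks of $B_1(\bdf)$ are positive (semi)definite; the first block $\kappa/\tau\,\Id$ is trivially positive.

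Next, by exactly the same strategy, I would prove $M(\bdf) \succeq B_2(\bdf)$. This time I split the top-left entry of $M(\bdf)-B_2(\bdf)$, namely $(\sigma_{\!\!K}/(1-\kappa))\nabla\bdK(\bdf)^{\tra}\nabla\bdK(\bdf) + (\sigma_{\!\!A}/(1-\kappa))A^{\tra}A$, into its two natural summands and pair each with the corresponding lower-right diagonal block to obtain two blocks of the dual form $\begin{bmatrix} \alpha^{-1} C^{\tra} C & -C^{\tra}\\ -C & \alpha \Id\end{bmatrix}$. These are again nonnegative via Young's inequality. For $B_2(\bdf) \succeq 0$, I need its top-left block to be positive semidefinite; summing the two inequalities $\sigma_{\!\!K}\|\nabla\bdK(\bdf)\|^2 < s(1-\kappa)/\tau$ and $\sigma_{\!\!A}\|A\|^2 < (1-s)(1-\kappa)/\tau$ coming from \cref{eq:step_assumption} yields $(\sigma_{\!\!K}/(1-\kappa))\|\nabla\bdK(\bdf)\|^2 + (\sigma_{\!\!A}/(1-\kappa))\|A\|^2 < 1/\tau$, which gives precisely what is needed. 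Finally, $M(\bdf) \succeq B_1(\bdf) \succeq 0$ (or $\succeq B_2(\bdf) \succeq 0$) gives positive semidefiniteness of $M(\bdf)$ itself.

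There is no real obstacle here beyond organizing the block bookkeeping carefully; the whole argument is a clean application of Young's inequality twice, with the step-size conditions \cref{eq:step_assumption} playing the single role of making the residual diagonal pieces nonnegative. The one point worth being explicit about is the choice of the splitting parameter $s$: the factor $s$ determines how much of the $1/\tau$ (resp.\ $1/\sigma_{\!\!K},1/\sigma_{\!\!A}$) budget is spent dominating the coupling with $\nabla\bdK(\bdf)$ versus $A$, and the assumption \cref{eq:step_assumption} is exactly the statement that such a splitting is feasible.
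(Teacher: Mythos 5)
Your proposal is correct and follows essentially the same route as the paper: the paper writes the quadratic forms of $M(\bdf)-B_1(\bdf)$ and $M(\bdf)-B_2(\bdf)$ explicitly as sums of two perfect squares (one coupling $\bar{\bdf}$ with $\bar{\bdu}$ through $\nabla\bdK(\bdf)$, one coupling $\bar{\bdf}$ with $\bar{\bdv}$ through $A$, using the same $s$/$(1-s)$ split of the $(1-\kappa)/\tau$ budget), which is just the completed-square form of your Young's-inequality blocks, and then verifies $B_1(\bdf)\succeq 0$ and $B_2(\bdf)\succeq 0$ from \cref{eq:step_assumption} via Cauchy--Schwarz exactly as you do.
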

\begin{proof}
See \cref{proof:M_self_adjoint} for the proof.
\end{proof}

Next, we present the following lemma.
\begin{lemma}\label{lemma: alg_estimate}
Assume that $G : \mathcal{X} \rightarrow \overline{\Real}_{+}$, $F^{\ast} : \mathcal{Y}_K \rightarrow \overline{\Real}_{+}$ 
and $E^{\ast} : \mathcal{Y}_A \rightarrow \overline{\Real}_{+}$ are proper, convex, \ac{l.s.c} functions, and $\bdf_{\theta}^{n+1}\in\mathcal{B}_{\whf}(\rho_{\bdf})$, $\bdbeta^{n},\bdbeta^{n+1}\in \mathcal{O}(\rho_{\bdf},\rho_{\bdu},\rho_{\bdv})$. Moreover, \cref{ass:monotone_of_F}--\cref{ass:nonlinear_restrict} and the conditions in \cref{lem:M_self_adjoint} hold in $\mathcal{O}(\rho_{\bdf},\rho_{\bdu},\rho_{\bdv})$. Then, we have the following estimate for scheme \cref{eq:algo_proposed1}
\begin{multline}\label{eq:alg1_H_estimate}
\langle\wtH^{\one}_{n+1}(\bdbeta^{n+1}), \bdbeta^{n+1}-\widehat{\bdbeta}\rangle  - \frac{1}{2}\|\bdbeta^{n+1}-\widehat{\bdbeta}\|_{M_{n+2}^{\one}-M_{n+1}^{\one}}^{2} 
+\frac{1}{2}\|\bdbeta^{n+1}-\bdbeta^{n}\|_{M_{n+1}^{\one}}^{2} \\ 
\ge  (\gamma_{F^{\ast}}-\gamma_1-\tilde{C})\|\bdu^{n+1}-\whu\|^2 +\Lambda_{\one}(\tau) \|\bdf^{n+1}-\bdf^n\|^{2} \\
-\Bigl(2\lambda_1+\frac{L\rho_{\bdu}}{2}\Bigr) \|\bdf^{n-1}-\bdf^n\|^{2},
\end{multline}
where $\tilde{C}:=C_{\!K}+ L\rho_{\bdf}/2,~
    \Lambda_{\one}(\tau):=\kappa/2\tau-(\tilde{C}+2\lambda_1+5L\rho_{\bdu}/2)$, 
and the following estimate for scheme \cref{eq:algo_proposed2}
\begin{multline}\label{eq:alg2_H_estimate}
\langle\wtH^{\two}_{n+1}(\bdbeta^{n+1}), \bdbeta^{n+1}-\widehat{\bdbeta}\rangle  -\frac{1}{2}\|\bdbeta^{n+1}-\widehat{\bdbeta}\|_{M_{n+2}^{\two}-M_{n+1}^{\two}}^{2} +\frac{1}{2}\|\bdbeta^{n+1}-\bdbeta^{n}\|_{M_{n+1}^{\two}}^{2}  \\
\ge  (\gamma_{F^{\ast}}-\gamma_1) \|\bdu^{n+1}-\whu\|^2 +\Lambda_{\two}(\tau)\|\bdf^{n+1}-\bdf^n\|^{2},
\end{multline}
where $\Lambda_{\two}({\tau}):= \kappa/2\tau -(\lambda_1+L\rho_{\bdu})$. 
\end{lemma}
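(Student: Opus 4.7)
The plan is to expand $\langle \wtH^{\bullet}_{n+1}(\bdbeta^{n+1}), \bdbeta^{n+1}-\widehat{\bdbeta}\rangle$ component-by-component and subtract the optimality relation $0\in H(\widehat{\bdbeta})$ from \cref{eq:optimal_conditions} to turn each raw subgradient into a monotonicity difference. Writing $\bdbeta^{n+1}-\widehat{\bdbeta}=(\bdf^{n+1}-\whf,\bdu^{n+1}-\whu,\bdv^{n+1}-\whv)$, the primal line contributes $\langle \partial G(\bdf^{n+1})-\partial G(\whf),\bdf^{n+1}-\whf\rangle\ge 0$ plus a pair of cross terms $\langle [\nabla\bdK(\bar{\bdf}^n)]^{\tra}\bdu^{n+1}-[\nabla\bdK(\whf)]^{\tra}\whu,\bdf^{n+1}-\whf\rangle$ and $\langle A^{\tra}(\bdv^{n+1}-\whv),\bdf^{n+1}-\whf\rangle$; the second dual line produces $\langle \partial F^{\ast}(\bdu^{n+1})-\partial F^{\ast}(\whu),\bdu^{n+1}-\whu\rangle$ (bounded below by $\gamma_{F^{\ast}}\|\bdu^{n+1}-\whu\|^2$ via \cref{ass:monotone_of_F}) minus $\langle \bdK(\bullet),\bdu^{n+1}-\whu\rangle$ terms; and the last dual line cancels the $A$-cross term.

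The heart of the argument is the identification of the residual cross terms with the template in \cref{ass:nonlinear_restrict}. After regrouping, the surviving ``$\bdK$-nonlinear'' piece is exactly
\[
\langle[\nabla\bdK(\bar{\bdf}^n)-\nabla\bdK(\whf)](\bdf^{n+1}-\whf),\whu\rangle+\langle \bdK(\whf)-\bdK(\bar{\bdf}^{n+1})-\nabla\bdK(\bar{\bdf}^{n+1})(\whf-\bar{\bdf}^{n+1}),\bdu^{n+1}-\whu\rangle,
\]
up to pieces of the form $\langle[\nabla\bdK(\bar{\bdf}^n)-\nabla\bdK(\whf)](\bdf^{n+1}-\whf),\bdu^{n+1}-\whu\rangle$ that I would discard with Cauchy--Schwarz and \cref{ass:locally_Lipschitz} into a multiple of $L\rho_{\bdu}\|\bdf^{n+1}-\whf\|\,\|\bdf^{n+1}-\whf\|$ absorbed by Young's inequality. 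Invoking \cref{ass:nonlinear_restrict} with $(\bdf,\bx)$ chosen as $(\bar{\bdf}^{n+1},\bar{\bdf}^n)$ for scheme I and as $(\bdf^{n+1},\bdf^{n+1})$ for scheme II then yields the $-\gamma_1\|\bdu^{n+1}-\whu\|^2$ term together with $-\lambda_1\|\bar{\bdf}^{n+1}-\bar{\bdf}^n\|^2$; with $\theta=1$ the latter expands via $\bdf_\theta^n=2\bdf^n-\bdf^{n-1}$ into the $\|\bdf^{n+1}-\bdf^n\|^2$ and $\|\bdf^{n-1}-\bdf^n\|^2$ contributions appearing in \cref{eq:alg1_H_estimate}, while in scheme II the difference is simply zero, explaining the cleaner form \cref{eq:alg2_H_estimate}.

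For the metric terms I would invoke \cref{lem:M_self_adjoint}: under \cref{eq:step_assumption} we have $M_{n+1}^{\bullet}\succeq B_2(\cdot)$, so $\tfrac{1}{2}\|\bdbeta^{n+1}-\bdbeta^n\|^2_{M_{n+1}^{\bullet}}\ge \tfrac{\kappa}{2\tau}\|\bdf^{n+1}-\bdf^n\|^2$ plus nonnegative $\bdu$- and $\bdv$-contributions, which produces the $\kappa/(2\tau)$ inside $\Lambda_{\one}$, $\Lambda_{\two}$. The subtracted quantity $\tfrac{1}{2}\|\bdbeta^{n+1}-\widehat{\bdbeta}\|^2_{M_{n+2}^{\bullet}-M_{n+1}^{\bullet}}$ only involves the off-diagonal blocks $-\nabla\bdK(\bar{\bdf}^{n+1})$ versus $-\nabla\bdK(\bar{\bdf}^n)$, so Cauchy--Schwarz plus \cref{ass:locally_Lipschitz} bounds it by a constant times $L\|\bar{\bdf}^{n+1}-\bar{\bdf}^n\|\,\|\bdf^{n+1}-\whf\|\,\|\bdu^{n+1}-\whu\|$, which Young's inequality splits (with $\|\bdf^{n+1}-\whf\|\le\rho_{\bdf}$ and $\|\bdu^{n+1}-\whu\|\le\rho_{\bdu}$) into the $\tilde{C}\|\bdu^{n+1}-\whu\|^2$ and $L\rho_{\bdu}\|\bdf^{n+1}-\bdf^n\|^2$ corrections; the asymmetry between the two schemes again comes from whether $\bar{\bdf}^{n+1}$ depends on $\bdf_\theta^{n+1}$ (producing the extra $L\rho_{\bdu}/2$ and $\tilde C$ coefficients in scheme I) or only on $\bdf^{n+1}$ (scheme II).

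The main obstacle will be the book-keeping: each scheme introduces the linearization point $\bar{\bdf}^{\bullet}$ in a slightly different place (inside $\nabla\bdK$, inside $\bdK$, and in the explicit Taylor remainder $\nabla\bdK(\bdf^{n+1})(\bdf_\theta^{n+1}-\bdf^{n+1})$), and each such mismatch has to be closed out either by an exact telescoping, by the convexity-Taylor identity \cref{eq:taylor_Kj}, or by a careful Lipschitz estimate whose constant is tracked into the final $\Lambda_{\bullet}(\tau)$. The precise coefficients $2\lambda_1+L\rho_{\bdu}/2$ on $\|\bdf^{n-1}-\bdf^n\|^2$ and $2\lambda_1+5L\rho_{\bdu}/2$ inside $\Lambda_{\one}$ arise from exactly this combinatorial splitting, so the calculation must apportion Young's-inequality weights consistently; everything else then follows from monotonicity and \cref{lem:M_self_adjoint}.
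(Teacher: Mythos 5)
Your overall skeleton---expanding $\langle\wtH_{n+1}(\bdbeta^{n+1}),\bdbeta^{n+1}-\widehat{\bdbeta}\rangle$ against the optimality inclusion \cref{eq:optimal_conditions}, using monotonicity of $\partial G,\partial E^{\ast}$ and strong monotonicity of $\partial F^{\ast}$, cancelling the $A$-cross terms, feeding the surviving $\bdK$-terms into \cref{ass:nonlinear_restrict}, and extracting $\kappa/(2\tau)$ from \cref{lem:M_self_adjoint}---is the same as the paper's. The genuine gap is in your identification with \cref{ass:nonlinear_restrict}. The algebra forces the first surviving term to be $\langle[\nabla\bdK(\bdf_{\theta}^n)-\nabla\bdK(\whf)](\bdf^{n+1}-\whf),\whu\rangle$, with $\bdf^{n+1}-\whf$ and \emph{not} $\bdf_{\theta}^{n+1}-\whf$, so the only admissible instantiation of the assumption is $\bdf=\bdf^{n+1}$ with $\bx=\bdf_{\theta}^n$ (resp.\ $\bx=\bdf^n$ for scheme II); this is what the paper does, and $\|\bdf^{n+1}-\bx\|^2$ then splits into the stated $2\lambda_1$ (resp.\ $\lambda_1$) coefficients. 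Your choice $(\bdf,\bx)=(\bdf_{\theta}^{n+1},\bdf_{\theta}^n)$ leaves the mismatch $\langle[\nabla\bdK(\bdf_{\theta}^n)-\nabla\bdK(\whf)](\bdf^{n+1}-\bdf_{\theta}^{n+1}),\whu\rangle$, which is of order $\|\whu\|\,\|\bdf^{n+1}-\bdf^n\|$---linear, not quadratic, in the increment, with $\whu\neq0$ fixed---so it cannot be absorbed into the right-hand side of \cref{eq:alg1_H_estimate} without introducing a non-vanishing constant that destroys the later summability argument. For scheme II your choice $\bx=\bdf=\bdf^{n+1}$ leaves the analogous unabsorbable mismatch $\langle[\nabla\bdK(\bdf^n)-\nabla\bdK(\bdf^{n+1})](\bdf^{n+1}-\whf),\whu\rangle$ and, moreover, your claim that ``the difference is simply zero'' contradicts the $\lambda_1$ that visibly survives in $\Lambda_{\two}$.

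Two further corrections. The price of taking $\bdf=\bdf^{n+1}$ in scheme I is that the second template term no longer matches the surviving Taylor remainder, which sits at $\bdf_{\theta}^{n+1}$; the paper compares the two remainders via the mean value theorem and \cref{ass:locally_Lipschitz}, and \emph{this} comparison---not a Cauchy--Schwarz bound on the metric-difference term---is the source of $\tilde C=C_{\!K}+L\rho_{\bdf}/2$ and of the weakened coefficient $\gamma_{F^{\ast}}-\gamma_1-\tilde C$ in \cref{eq:alg1_H_estimate}. The term $\tfrac{1}{2}\|\bdbeta^{n+1}-\widehat{\bdbeta}\|^2_{M_{n+2}^{\one}-M_{n+1}^{\one}}$ is used as an \emph{exact identity}, equal to $\langle[\nabla\bdK(\bdf_{\theta}^{n})-\nabla\bdK(\bdf_{\theta}^{n+1})](\bdf^{n+1}-\whf),\bdu^{n+1}-\whu\rangle$, precisely in order to move the gradient in the cross term from $\bdf_{\theta}^n$ to $\bdf_{\theta}^{n+1}$ and thereby assemble the Taylor remainder; bounding it by Cauchy--Schwarz, as you propose, forfeits that cancellation and leaves yet another gradient mismatch to repair. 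Finally, a discarded piece of the form $L\rho_{\bdu}\|\bdf^{n+1}-\whf\|^2$ cannot be ``absorbed by Young's inequality'': there is no matching positive term on the right-hand side, so every discarded piece must be quadratic in the increments $\bdf^{n+1}-\bdf^n$, $\bdf^{n}-\bdf^{n-1}$ or in $\bdu^{n+1}-\whu$. (A minor slip: the lower bound $\tfrac{\kappa}{2\tau}\|\bdf^{n+1}-\bdf^n\|^2$ comes from $M\succeq B_1$, not $B_2$.)
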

\begin{proof}
See \cref{proof:alg_estimate} for the proof. 
\end{proof}

Then we have the following corollary. 
\begin{corollary}\label{feifei}
 Under the assumptions of \cref{lemma: alg_estimate}, suppose further that $\gamma_{F^{\ast}} > \gamma_1+\tilde{C}$. If step size $\tau$ satisfies
 \begin{equation}\label{eq:step_restrict1}
     \tau < \frac{\kappa}{2(\tilde{C}+4\lambda_1+3L\rho_{\bdu})},
  \end{equation}
 then $\{\bdbeta^n\}_{n\in\mathbb{N}}$ that generated by scheme \cref{eq:algo_proposed1} 
 fulfils   
 \begin{equation}\label{eq:alg1_iterates_estimate}
     \|\bdbeta^{n+1}-\widehat{\bdbeta}\|_{M_{n+2}^{\one}} \le \|\bdbeta^{0}-\widehat{\bdbeta}\|_{M_{1}^{\one}}.
 \end{equation}
Furthermore, if step size $\tau$ satisfies
 \begin{equation}\label{eq:step_restrict2}
     \tau < \frac{\kappa}{2(\lambda_1 + L\rho_{\bdu})},
 \end{equation}
 then $\{\bdbeta^n\}_{n\in\mathbb{N}}$ produced by scheme \cref{eq:algo_proposed2} 
 have the following relationship
 \begin{align}\label{eq:alg2_iterates_estimate}
     \|\bdbeta^{n+1}-\widehat{\bdbeta}\|_{M_{n+2}^{\two}} \leq \|\bdbeta^{n}-\widehat{\bdbeta}\|_{M_{n+1}^{\two}} \le \cdots\le \|\bdbeta^{0}-\widehat{\bdbeta}\|_{M_{1}^{\two}}.
 \end{align}
\end{corollary}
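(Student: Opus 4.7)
The plan is to combine Lemma~\ref{lemma: alg_estimate} with Theorem~\ref{thm:basis convergence}, identifying for each scheme a quantity $\mathcal{V}_{n+1}$ for which the descent hypothesis \cref{inq:descent estimate on fixed step} of Theorem~\ref{thm:basis convergence} holds. Positive semidefiniteness of the metrics $M_{n+1}^{\one}$ and $M_{n+1}^{\two}$ is handed to us by Lemma~\ref{lem:M_self_adjoint} under \cref{eq:step_assumption}, so the only real work is controlling the sign of $\mathcal{V}_{n+1}$ after the Lemma~\ref{lemma: alg_estimate} estimates have been invoked.

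For the simpler scheme \cref{eq:algo_proposed2}, I would choose
\[
\mathcal{V}_{n+1} := (\gamma_{F^{\ast}}-\gamma_1)\|\bdu^{n+1}-\whu\|^2 + \Lambda_{\two}(\tau)\|\bdf^{n+1}-\bdf^n\|^2,
\]
matching the right-hand side of \cref{eq:alg2_H_estimate}. Since by hypothesis $\gamma_{F^{\ast}} > \gamma_1 + \tilde{C} \ge \gamma_1$, the first term is nonnegative; the step-size restriction \cref{eq:step_restrict2} is exactly $\Lambda_{\two}(\tau) \ge 0$, so the second term is also nonnegative. Each $\mathcal{V}_{n+1}$ is therefore nonnegative, and \cref{inq:descent estimate for neighbor step} directly yields $\|\bdbeta^{n+1}-\widehat{\bdbeta}\|_{M_{n+2}^{\two}} \le \|\bdbeta^{n}-\widehat{\bdbeta}\|_{M_{n+1}^{\two}}$; iterating this delivers \cref{eq:alg2_iterates_estimate}.

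For scheme \cref{eq:algo_proposed1}, the indefinite correction $-(2\lambda_1 + L\rho_{\bdu}/2)\|\bdf^{n-1}-\bdf^n\|^2$ appearing in \cref{eq:alg1_H_estimate} prevents pointwise nonnegativity. I would take $\mathcal{V}_{n+1}$ equal to the full right-hand side of \cref{eq:alg1_H_estimate} and invoke the summed form \cref{inq: N steps descent estimate} of Theorem~\ref{thm:basis convergence}. The $\bdu$-contribution to $\sum_{n=0}^{N}\mathcal{V}_{n+1}$ is nonnegative by $\gamma_{F^{\ast}} > \gamma_1 + \tilde{C}$; reindexing the $\bdf$-contribution and using the natural convention $\bdf^{-1} := \bdf^{0}$ (consistent with the initialization $\bdf_{\theta}^{0} = \bdf^{0}$ in Algorithm~\ref{algo:proposed_NLPD_algorithm}) reduces it to
\[
\Lambda_{\one}(\tau)\|\bdf^{N+1}-\bdf^{N}\|^{2} + \sum_{n=0}^{N-1}\bigl(\Lambda_{\one}(\tau) - (2\lambda_{1}+L\rho_{\bdu}/2)\bigr)\|\bdf^{n+1}-\bdf^{n}\|^{2},
\]
which is $\ge 0$ exactly when $\Lambda_{\one}(\tau) \ge 2\lambda_{1}+L\rho_{\bdu}/2$; a direct arithmetic check shows this condition is equivalent to the step-size restriction \cref{eq:step_restrict1}. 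Invoking \cref{inq: N steps descent estimate} then produces \cref{eq:alg1_iterates_estimate}.

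The main obstacle is the telescoping step for scheme \cref{eq:algo_proposed1}: because $\mathcal{V}_{n+1}$ couples a forward difference at step $n$ with a backward difference, the cancellation is invisible pointwise and surfaces only after summation, and the step-size threshold \cref{eq:step_restrict1} must be tuned precisely against the backward coefficient $2\lambda_1 + L\rho_{\bdu}/2$. A secondary, implicit obstacle is justifying that the iterates stay inside $\mathcal{O}(\rho_{\bdf},\rho_{\bdu},\rho_{\bdv})$ so that Lemma~\ref{lemma: alg_estimate} applies at every step; this can be handled by induction, since Lemma~\ref{lem:M_self_adjoint} uniformly sandwiches $M_{n+1}^{\bullet}$ between fixed positive semidefinite matrices on the neighborhood, so the monotonic $M$-norm bound furnishes a Euclidean-distance control on $\bdbeta^{n+1}-\widehat{\bdbeta}$ whenever the initialization is chosen sufficiently close to $\widehat{\bdbeta}$.
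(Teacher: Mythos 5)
Your proposal is correct and follows essentially the same route as the paper: both invoke \cref{thm:basis convergence} with the right-hand sides of \cref{eq:alg1_H_estimate} and \cref{eq:alg2_H_estimate} as $\mathcal{V}_{n+1}$, obtaining pointwise nonnegativity for scheme \cref{eq:algo_proposed2} and, for scheme \cref{eq:algo_proposed1}, telescoping the mixed forward/backward differences in the summed form \cref{inq: N steps descent estimate} (with the backward term absent at $n=0$ since $\bar{\bdf}^0=\bdf^0$) so that \cref{eq:step_restrict1} is precisely $\Lambda_{\one}(\tau) > 2\lambda_1 + L\rho_{\bdu}/2$. The arithmetic identifications of the step-size thresholds check out, and the neighborhood containment you flag is already part of the standing hypotheses inherited from \cref{lemma: alg_estimate}, so no extra induction is needed here.
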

\begin{proof}
The proof is referred to \cref{proof:roughly_estimate}.
\end{proof}

Moreover, we need to verify that assumptions $\bdf_{\theta}^{n+1}\in \mathcal{B}_{\whf}(\rho_{\bdf})$ and $\bdbeta^n,\bdbeta^{n+1}\in\mathcal{O}(\rho_{\bdf},\rho_{\bdu},\rho_{\bdv})$ hold in \cref{lemma: alg_estimate}.
The case for scheme \cref{eq:algo_proposed1} will be proven in the following lemma, and it can be proven for scheme \cref{eq:algo_proposed2} by the same method.
\begin{lemma}\label{lemma:ensure_beta_n_neigborhood}
Given $\rho_{\bdf}, \rho_{\bdu}, \rho_{\bdv} > 0$, assume that \cref{ass:monotone_of_F} with $\gamma_{F^{\ast}} > \gamma_1+\tilde{C}$, 
and \cref{ass:locally_Lipschitz} and \cref{ass:nonlinear_restrict} hold, 
and step sizes $\tau$, $\sigma_{\!\!K}$ and $\sigma_{\!\!A}$ satisfy \cref{eq:step_assumption} and \cref{eq:step_restrict1}, 
and initial point $\bdbeta^0 \in \mathcal{O}(r_{\bdf},r_{\bdu},r_{\bdv})$ such that 
\begin{equation}\label{eq:initial_point_assumption}
   \|\bdbeta^0-\widehat{\bdbeta}\|_{M_1^{\one}}\le \min\Bigl\{ r_{\bdf}\sqrt{\kappa / \tau}, r_{\bdu}\sqrt{\kappa / \sigma_{\!\!K}},r_{\bdv}\sqrt{\kappa / \sigma_{\!\!A}} \Bigr\}, 
\end{equation}
where $r_{\bdf}+2C_{\bdf}\le \rho_{\bdf}$, $r_{\bdu}+C_{\bdu}\le \rho_{\bdu}$, $r_{\bdv}+C_{\bdv}\le \rho_{\bdv}$, and 
\begin{equation*}
    C_{\bdf}:=\tau\left((r_{\bdu}+2\|\whu\|)C_{\!K}+\|A\|r_{\bdv}\right),~
    C_{\bdu}:=\sigma_{\!\!K} (r_{\bdf}+2C_{\bdf}) C_{\!K}, ~
    C_{\bdv}:=\sigma_{\!\!A} (r_{\bdf}+2C_{\bdf})\|A\|. 
\end{equation*} 
Then for the iterate sequence generated by scheme \cref{eq:algo_proposed1}, $\bdbeta^{n}\in \mathcal{O}(\rho_{\bdf},\rho_{\bdu},\rho_{\bdv})$, and $\bdf_{\theta}^{n+1}\in \mathcal{B}_{\whf}(\rho_{\bdf})$ hold for all $n$.
\end{lemma}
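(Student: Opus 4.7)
The plan is an induction on $n$ with joint hypothesis $(H_n)$: $\bdbeta^{k}\in\mathcal{O}(r_{\bdf},r_{\bdu},r_{\bdv})$ and $\bdf_{\theta}^{k}\in\mathcal{B}_{\whf}(\rho_{\bdf})$ for all $k\le n$. The base case $(H_0)$ is immediate: $\bdbeta^{0}\in\mathcal{O}(r_{\bdf},r_{\bdu},r_{\bdv})$ by assumption, and $\bdf_{\theta}^{0}=\bdf^{0}\in\mathcal{B}_{\whf}(r_{\bdf})\subset\mathcal{B}_{\whf}(\rho_{\bdf})$ since $r_{\bdf}\le\rho_{\bdf}$.

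The inductive step is hampered by a circular dependency: \cref{feifei} would yield the sharp bound $\|\bdbeta^{n+1}-\widehat{\bdbeta}\|_{M_{n+2}^{\one}}\le\|\bdbeta^{0}-\widehat{\bdbeta}\|_{M_{1}^{\one}}$, and hence (via \cref{lem:M_self_adjoint} and \cref{eq:initial_point_assumption}) the desired $\bdbeta^{n+1}\in\mathcal{O}(r_{\bdf},r_{\bdu},r_{\bdv})$, but invoking it requires $\bdf_{\theta}^{n+1}\in\mathcal{B}_{\whf}(\rho_{\bdf})$ and $\bdbeta^{n+1}\in\mathcal{O}(\rho_{\bdf},\rho_{\bdu},\rho_{\bdv})$ from the hypotheses of \cref{lemma: alg_estimate}. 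I plan to break the circle by first producing these coarser containments directly from the prox updates. Comparing \cref{eq:updatef_1} against the optimality identity $\whf=\prox_{\tau G}(\whf-\tau[\nabla\bdK(\whf)]^{\tra}\whu-\tau A^{\tra}\whv)$ from \cref{eq:optimal_conditions}, 1-Lipschitz of $\prox_{\tau G}$ together with $\|\nabla\bdK(\cdot)\|\le C_{\!K}$ on $\mathcal{B}_{\whf}(\rho_{\bdf})$ (from \cref{ass:locally_Lipschitz}) and $\|\bdu^{n}\|\le\|\whu\|+r_{\bdu}$ delivers $\|\bdf^{n+1}-\whf\|\le r_{\bdf}+C_{\bdf}$.

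The delicate step is bounding $\bdf_{\theta}^{n+1}=2\bdf^{n+1}-\bdf^{n}$, since a naive triangle estimate would give only $3r_{\bdf}+2C_{\bdf}$, which exceeds the available margin $r_{\bdf}+2C_{\bdf}\le\rho_{\bdf}$. The trick is to introduce $\bdf^{\ast}_{n}:=\prox_{\tau G}(\bdf^{n}-\tau[\nabla\bdK(\whf)]^{\tra}\whu-\tau A^{\tra}\whv)$ and split
\[
\bdf_{\theta}^{n+1}-\whf=2(\bdf^{n+1}-\bdf^{\ast}_{n})+\bigl[2(\bdf^{\ast}_{n}-\whf)-(\bdf^{n}-\whf)\bigr].
\]
The first piece is $\le 2C_{\bdf}$ by nonexpansivity of $\prox_{\tau G}$; the firm nonexpansivity statement of \cref{nonexpan of prox} gives $\|\bdf^{\ast}_{n}-\whf\|^{2}\le\langle \bdf^{\ast}_{n}-\whf,\bdf^{n}-\whf\rangle$, which expands out to bound the second piece by $\|\bdf^{n}-\whf\|\le r_{\bdf}$. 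Hence $\|\bdf_{\theta}^{n+1}-\whf\|\le r_{\bdf}+2C_{\bdf}\le\rho_{\bdf}$. Analogous nonexpansivity estimates on \cref{eq:updateu_1} and \cref{eq:updatev_1}, using Lipschitz of $\bdK$ together with the $\bdf_{\theta}^{n+1}$ bound just obtained, yield $\|\bdu^{n+1}-\whu\|\le r_{\bdu}+C_{\bdu}\le\rho_{\bdu}$ and $\|\bdv^{n+1}-\whv\|\le r_{\bdv}+C_{\bdv}\le\rho_{\bdv}$, completing the coarse containments.

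These coarse containments activate the hypotheses of \cref{lemma: alg_estimate} at step $n$ (those for earlier steps come from $(H_n)$), so \cref{feifei} applies and produces $\|\bdbeta^{n+1}-\widehat{\bdbeta}\|_{M_{n+2}^{\one}}\le\|\bdbeta^{0}-\widehat{\bdbeta}\|_{M_{1}^{\one}}$. Extracting the $\bdf$-component via $M_{n+2}^{\one}\succeq B_{1}(\bdf_{\theta}^{n+1})$ and the $\bdu,\bdv$-components via $M_{n+2}^{\one}\succeq B_{2}(\bdf_{\theta}^{n+1})$ from \cref{lem:M_self_adjoint}, then substituting \cref{eq:initial_point_assumption}, closes $(H_{n+1})$. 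The main obstacle is the firm-nonexpansivity trick for $\bdf_{\theta}^{n+1}$: it is precisely what recovers the factor $2C_{\bdf}$ (instead of $2r_{\bdf}+2C_{\bdf}$) of slack, and is what makes the bootstrap consistent with the margin convention $r_{\bdf}+2C_{\bdf}\le\rho_{\bdf}$ built into the hypothesis.
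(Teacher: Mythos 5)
Your proposal is correct and follows essentially the same two-tier induction as the paper: coarse containment of $\bdbeta^{n+1}$ in $\mathcal{O}(\rho_{\bdf},\rho_{\bdu},\rho_{\bdv})$ via prox nonexpansivity, with firm nonexpansivity supplying the sharp bound $\|\bdf_{\theta}^{n+1}-\whf\|\le\|\bdf^{n}-\whf\|+2C_{\bdf}^{n}$, followed by \cref{feifei} and the bounds $M\succeq B_1, B_2$ from \cref{lem:M_self_adjoint} to recover $\bdbeta^{n+1}\in\mathcal{O}(r_{\bdf},r_{\bdu},r_{\bdv})$. Your decomposition through $\bdf^{\ast}_{n}$ and the nonexpansive reflection $2\prox_{\tau G}-\Id$ is just a cleaner repackaging of the paper's computation, which reaches the identical estimate by expanding $\|\bdf_{\theta}^{n+1}-\whf\|^{2}=2(\|\bdf^{n+1}-\whf\|^{2}+\|\bdf^{n+1}-\bdf^{n}\|^{2})-\|\bdf^{n}-\whf\|^{2}$ and inserting the firm-nonexpansivity inequality.
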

\begin{proof}
See \cref{proof:ensure_beta_O} for the proof.
\end{proof}
Finally, we apply the previous analysis to conclude convergent results for the proposed algorithm framework. 
Here we only give the convergence theorem for scheme \cref{eq:algo_proposed1}, without loss of generality. 
\begin{theorem}\label{thm:PD alg converge}
Assume that $G : \mathcal{X} \rightarrow \overline{\Real}_{+}$, $F^{\ast} : \mathcal{Y}_K \rightarrow \overline{\Real}_{+}$ 
and $E^{\ast} : \mathcal{Y}_A \rightarrow \overline{\Real}_{+}$ are proper, convex, \ac{l.s.c} functions. 
Under the conditions in \cref{lemma:ensure_beta_n_neigborhood}, iterate sequence $\{\bdbeta^n\}_{n\in\mathbb{N}}$ 
generated by scheme \cref{eq:algo_proposed1} converges to some point which satisfies the optimal conditions in  \cref{eq:optimal_conditions}.
\end{theorem}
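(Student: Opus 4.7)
The plan is to feed the descent estimate \cref{eq:alg1_H_estimate} into the abstract convergence engine \cref{thm:basis convergence}, extract residual summability via a telescoping trick, identify subsequential cluster points as solutions, and then promote subsequential to full-sequence convergence. First I would invoke \cref{lemma:ensure_beta_n_neigborhood} so that $\bdbeta^{n}\in\mathcal{O}(\rho_{\bdf},\rho_{\bdu},\rho_{\bdv})$ and $\bdf_{\theta}^{n+1}\in\mathcal{B}_{\whf}(\rho_{\bdf})$ for every $n$, legitimizing the use of \cref{lem:M_self_adjoint} and \cref{lemma: alg_estimate} at each iteration. Applying \cref{thm:basis convergence} to \cref{eq:update_1_simplify} with $M_{n+1}=M_{n+1}^{\one}$ and $\mathcal{V}_{n+1}$ equal to the right-hand side of \cref{eq:alg1_H_estimate} then yields, for every $N\ge 1$,
\begin{equation*}
\tfrac{1}{2}\|\bdbeta^{N+1}-\widehat{\bdbeta}\|_{M_{N+2}^{\one}}^{2}+\sum_{n=0}^{N}\mathcal{V}_{n+1}\le\tfrac{1}{2}\|\bdbeta^{0}-\widehat{\bdbeta}\|_{M_{1}^{\one}}^{2}.
\end{equation*}

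Next I would telescope the primal part of $\sum\mathcal{V}_{n+1}$. Setting $\bdf^{-1}:=\bdf^{0}$ and shifting the index on $\|\bdf^{n-1}-\bdf^{n}\|^{2}$ regroups these contributions into
\begin{equation*}
\bigl[\Lambda_{\one}(\tau)-(2\lambda_{1}+L\rho_{\bdu}/2)\bigr]\sum_{n=0}^{N-1}\|\bdf^{n+1}-\bdf^{n}\|^{2}+\Lambda_{\one}(\tau)\|\bdf^{N+1}-\bdf^{N}\|^{2},
\end{equation*}
whose leading coefficient is nonnegative precisely under the step-size bound \cref{eq:step_restrict1}. Combined with $\gamma_{F^{\ast}}-\gamma_{1}-\tilde{C}>0$, every summand on the left of the previous display is nonnegative, giving (i) uniform boundedness of $\|\bdbeta^{n}-\widehat{\bdbeta}\|_{M_{n+1}^{\one}}$, (ii) $\sum_{n}\|\bdu^{n+1}-\whu\|^{2}<\infty$ and hence $\bdu^{n}\to\whu$, and (iii) $\sum_{n}\|\bdf^{n+1}-\bdf^{n}\|^{2}<\infty$, so $\bdf^{n+1}-\bdf^{n}\to 0$ and, with $\theta=1$, also $\bdf_{\theta}^{n+1}-\bdf^{n+1}\to 0$.

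Since $\{\bdbeta^{n}\}$ is bounded by \cref{lemma:ensure_beta_n_neigborhood}, I extract a subsequence $\bdbeta^{n_{k}}\to\bdbeta^{\star}\in\mathcal{O}$ with $\bdu^{\star}=\whu$ by (ii). Passing to the limit in \cref{eq:update_1_simplify} along $\{n_{k}\}$ — using continuity of $\bdK$ and $\nabla\bdK$ on the neighborhood, closedness of the graphs of the maximal monotone operators $\partial G,\partial F^{\ast},\partial E^{\ast}$, and the vanishing residuals from (iii) — shows $0\in H(\bdbeta^{\star})$, so $\bdbeta^{\star}$ satisfies \cref{eq:optimal_conditions}. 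Since $\bdbeta^{\star}\in\mathcal{O}$ is itself a solution, I reapply \cref{lemma: alg_estimate} and \cref{thm:basis convergence} with $\widehat{\bdbeta}$ replaced by $\bdbeta^{\star}$, obtaining an analogous bound on $\|\bdbeta^{n+1}-\bdbeta^{\star}\|_{M_{n+2}^{\one}}$; combined with $\bdbeta^{n_{k}}\to\bdbeta^{\star}$ and the strictly positive floor on $M_{n+1}^{\one}$ furnished by $B_{2}(\bdf)$ in \cref{lem:M_self_adjoint} under \cref{eq:step_assumption} (which makes $\|\cdot\|_{M_{n+1}^{\one}}$ uniformly equivalent to the standard Euclidean norm), a variable-metric Opial-type argument upgrades this to $\bdbeta^{n}\to\bdbeta^{\star}$.

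The main obstacle will be this final step: $M_{n+1}^{\one}=M(\bdf_{\theta}^{n})$ is time-varying and the $\bdv$-coordinate has no explicit quadratic control in $\mathcal{V}_{n+1}$, so neither standard Fej\'er monotonicity nor naive subsequence-limit passages apply verbatim — in particular, aligning the cluster points of $\bdv^{n_{k}}$ and $\bdv^{n_{k}+1}$ requires care. The resolution relies on \cref{ass:locally_Lipschitz} to control $M_{n+2}^{\one}-M_{n+1}^{\one}$ by $\|\bdf^{n+1}-\bdf^{n}\|$ (already embedded in \cref{eq:alg1_H_estimate}) and on the strictly positive $B_{2}(\bdf)$ lower bound from \cref{lem:M_self_adjoint}, which makes $\|\cdot\|_{M_{n+1}^{\one}}$ uniformly equivalent to the standard norm and lets the Opial-type upgrade propagate to the $\bdv$-coordinate as well.
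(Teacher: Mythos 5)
Your proposal tracks the paper exactly through the main energy estimate: invoke \cref{lemma:ensure_beta_n_neigborhood} to keep the iterates in $\mathcal{O}(\rho_{\bdf},\rho_{\bdu},\rho_{\bdv})$, feed \cref{eq:alg1_H_estimate} into \cref{thm:basis convergence}, telescope, and conclude $\sum_n\|\bdu^{n+1}-\whu\|^2<\infty$ and $\sum_n\|\bdf^{n+1}-\bdf^n\|^2<\infty$ under \cref{eq:step_restrict1} and $\gamma_{F^{\ast}}>\gamma_1+\tilde C$. Where you diverge is the endgame. The paper asserts that $\sum_n\|\bdf^{n+1}-\bdf^n\|^2<\infty$ makes $\{\bdf^n\}$ Cauchy, deduces convergence of $\bdf^n$ and (``similarly'') of $\bdv^n$, gets $\bdu^n\to\whu$ from strong monotonicity, and passes to the limit in the fixed-point form of \cref{eq:algo_proposed1}. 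Your instinct that this step deserves more care is sound — summability of squared increments does not by itself imply Cauchy — and your subsequence-plus-Fej\'er upgrade is the standard way to repair such arguments.

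However, your repair has two concrete gaps. First, re-applying \cref{lemma: alg_estimate} with $\widehat{\bdbeta}$ replaced by the cluster point $\bdbeta^{\star}$ is not licensed: \cref{ass:monotone_of_F}--\cref{ass:nonlinear_restrict} are anchored at the \emph{particular} solution $(\whf,\whu,\whv)$ (\cref{ass:nonlinear_restrict} explicitly involves $\whf$ and $\whu$), so the descent inequality is only available relative to $\widehat{\bdbeta}$, not relative to an arbitrary solution in $\mathcal{O}$. You would need to either assume the hypotheses hold at every solution in the neighborhood or argue that $\bdf^{\star}=\whf$, neither of which is given. Second, the $\bdv$-coordinate problem you flag is real but your proposed resolution does not address it: neither the Lipschitz control of $M_{n+2}^{\one}-M_{n+1}^{\one}$ nor the $B_2$ floor produces $\bdv^{n_k+1}-\bdv^{n_k}\to 0$, which you need to pass to the limit in the third row of \cref{eq:update_1_simplify}. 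The available fix is upstream: in \cref{lemma: alg_estimate} the term $\tfrac12\|\bdbeta^{n+1}-\bdbeta^n\|^2_{M_{n+1}^{\one}}$ is lower-bounded only by $\tfrac{\kappa}{2\tau}\|\bdf^{n+1}-\bdf^n\|^2$, discarding the strictly positive $\bdu$- and $\bdv$-blocks of $B_1$ guaranteed by the strict inequalities in \cref{eq:step_assumption}; retaining them yields $\sum_n\|\bdv^{n+1}-\bdv^n\|^2<\infty$ and $\sum_n\|\bdu^{n+1}-\bdu^n\|^2<\infty$, after which the limit passage in the inclusion goes through along any convergent subsequence. With those two repairs your route is viable and arguably more rigorous than the paper's, but as written it does not close.
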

\begin{proof}
The proof is referred to \cref{proof:convergence_thm}.
\end{proof}
\begin{remark}
Since problem \cref{eq:mini_max_problem} may have more than one solutions, \cref{thm:PD alg converge} only shows that \cref{algo:proposed_NLPD_algorithm} converges to some point which may not be the solution we assumed previously. 
\end{remark}
\begin{remark}
Similarly, under the assumptions in \cref{subsec:Assumptions}, the convergence of the other schemes can be given. Just some conditions need to be slightly modified, for instance, $\gamma_{F^{\ast}}>\gamma_1$ required; the two linearized schemes \cref{eq:algo_proposed2} and \cref{eq:Linearized_NL_PDHGM} require that \cref{eq:step_restrict2} is satisfied; 
scheme \cref{eq:Exact_NL_PDHGM} requires $\tau< \kappa/(2\lambda_1+L\rho_{\bdu})$. The left proofs of them are analogous. 
\end{remark}

\section{Application to nonlinear imaging}
\label{sec:specific_algorithms}

We illustrate the effectiveness of the proposed algorithm framework for nonlinear 
imaging problems. In this section, we specifically consider the image reconstruction in spectral CT. 

To this end,  in \cref{eq:opt}, we take the data fidelity term as 
\begin{equation}\label{eq:fidelity}
 \DistFunc\bigl(\bdK(\bdf), \bdg\bigr) := \frac{1}{2}\|\bdK(\bdf) - \bdg\|_2^2, 
\end{equation} 
where the component of $\bdK(\bdf)$ is defined as \cref{eq:log_sum_exp}, and 
the regularization term is given as \ac{TV} functional \cite{RuOsFa92}, i.e., 
\[
\RegFunc(\bdf) := \lambda \|\nabla \bdf \|_1. 
\]
Here $\nabla = \diag\bigl(\nabla_1, \ldots, \nabla_D\bigr)$ (i.e., the $A := \nabla$ in \cref{eq:sparse_rep}), $\nabla_d$ denotes associated 
discrete form of the gradient operator for $\bdf_{\!\!d}$, and $\domain := \mathbb{R}_{+}^{DN}$ represents 
the space of nonnegative vectors. Hence, the specific optimization problem for image reconstruction in 
spectral CT can be formulated as
\begin{equation}\label{eq:specific_opt}
\min_{\bdf \in \mathbb{R}_{+}^{DN}} \left\{\frac{1}{2}\|\bdK(\bdf) - \bdg\|_2^2
+ \lambda \|\nabla \bdf\|_1\right\}.
\end{equation} 
Note that model \cref{eq:specific_opt} is actually equivalent to the model that changed the \ac{TV} regularization 
term into an inequality constraint, and the latter one has been considered in \cite{chpan21}. 
For \cref{eq:specific_opt}, the general framework in \cref{eq:opt_primal} is specified by 
\begin{align}
\label{eq:specific_F}&F\bigl(\bdK(\bdf)\bigr) := \frac{1}{2}\|\bdK(\bdf) - \bdg\|_2^2, \\
\label{eq:specific_E}&E(A\bdf) := \lambda \|\nabla \bdf\|_1, \\
\label{eq:specific_G}&G(\bdf) := \begin{cases} 
     0 & \text{if}~\bdf\in \mathbb{R}_{+}^{DN},  \\
    +\infty &  \text{otherwise}. 
   \end{cases}
\end{align}
Accordingly, we derive that 
\begin{align} 
\label{eq:specific_F_star}&F^{\ast}(\bdu) = \frac{1}{2}\|\bdu\|^2 + \langle \bdg, \bdu\rangle,  \\
\label{eq:specific_E_star}&E^{\ast}(\bdv) = \delta_{\textsf{Box}(\lambda)}(\bdv) = \begin{cases} 
     0, & \text{if}~\bdv\in \textsf{Box}(\lambda) := \{\bdv: \|\bdv\|_{\infty} \leq \lambda\},  \\
    +\infty, &  \text{otherwise}. 
  \end{cases}
\end{align}
It is obvious that here $G$, $F^{\ast}$ and $E^{\ast}$ are proper, convex, \ac{l.s.c} functions, and $F^{\ast}$ is strongly convex. 
Furthermore, we give some significant results about the $K_j$ in \cref{eq:log_sum_exp} by the following theorem. 
\begin{theorem}\label{thm:convex_lips_g}
For any given finite sequence $\{b_{dm}\}$ for $1\le d \le D$ and $1\le m \le M$, $K_j$ defined in \cref{eq:log_sum_exp} is 
Fr\'echet differentiable and convex with respect to $\bdf$. Furthermore, its gradient $\nabla K_j$ is global Lipschitz continuous.  
\end{theorem}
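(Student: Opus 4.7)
The plan is to reduce the analysis of $K_j$ to the classical log-sum-exp function $\mathrm{lse}(\bx) := \ln\sum_{m} \exp(x_m)$, whose convexity and smoothness are standard. Dropping any indices with $s_{jm}=0$ (which contribute nothing to the sum in \cref{eq:log_sum_exp}) and setting $c_{jm} := \ln s_{jm}$ for the remaining indices, I would introduce the linear operator $T_j : \mathcal{X} \to \Real^{M}$ defined componentwise by $(T_j \bdf)_m := -\sum_{d=1}^{D} b_{dm}\,\bda_j^{\tra} \bdf_{\!\!d}$, so that
\[
K_j(\bdf) = \mathrm{lse}(T_j \bdf + \bdc_j)
\]
for the constant vector $\bdc_j$ with entries $c_{jm}$. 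Fr\'echet differentiability of $K_j$ then follows immediately from the smoothness of $\mathrm{lse}$ and the chain rule, and convexity of $K_j$ follows from the convexity of $\mathrm{lse}$ precomposed with the affine map $\bdf \mapsto T_j \bdf + \bdc_j$.

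For the global Lipschitz continuity of $\nabla K_j$, I would differentiate once more via the chain rule to obtain
\[
\nabla^2 K_j(\bdf) = T_j^{\tra}\, \nabla^2 \mathrm{lse}(T_j \bdf + \bdc_j)\, T_j.
\]
The key observation is that the Hessian of $\mathrm{lse}$ at any point is exactly the covariance matrix $\diag(p) - p p^{\tra}$ of the softmax distribution $p$ associated with that point; since $p$ has nonnegative entries summing to one, this matrix is positive semidefinite with operator norm bounded by one (indeed $v^{\tra}(\diag(p)-pp^{\tra})v = \mathrm{Var}_p(v) \le \max_m v_m^2$ for any $v$). Consequently,
\[
0 \preceq \nabla^2 K_j(\bdf) \preceq \|T_j\|^2\, \Id
\]
uniformly in $\bdf$, and the standard mean-value argument for vector-valued functions yields $\|\nabla K_j(\bdf) - \nabla K_j(\bdf')\| \le \|T_j\|^2 \|\bdf - \bdf'\|$, giving global Lipschitz continuity of $\nabla K_j$ with an explicit constant controlled by $\|\bda_j\|$ and the coefficients $\{b_{dm}\}$.

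The only mildly delicate point is accommodating the case when some $s_{jm}$ vanish, so that $\ln s_{jm}$ is not well defined; I would handle this by restricting the sum to the active set $\{m : s_{jm} > 0\}$ throughout, which does not alter the value of $K_j$ and keeps the $\mathrm{lse}$ representation well posed. Beyond that, I do not anticipate any genuine obstacle: the entire argument is a textbook application of the fact that $\mathrm{lse}$ is a smooth convex function whose Hessian is a bounded covariance matrix, and since the theorem only asserts existence of a Lipschitz constant I would not dwell on optimizing it.
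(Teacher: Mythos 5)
Your proposal is correct, and it reaches the same conclusions by a genuinely cleaner route than the paper. The paper works directly with the composite expression: it computes the gradient $\nabla_{\bdf_{\!\!d}}K_j(\bdf) = -\bigl(\bar{\bdomega}_{j}(\bdf)^{\tra}\bdb_{\!d}\bigr)\bda_j$ and the Hessian blocks $\nabla_{\bdf_{\!\!d}\bdf_{\!\!e}}^2K_j(\bdf) = \alpha_{de}(\bdf)\bda_j\bda_j^{\tra}$ entrywise, verifies positive semidefiniteness through an explicit sum-of-squares identity over pairs $m<n$, and bounds the largest Hessian eigenvalue by $\|\bda_j\|^2\sum_{d}\alpha_{dd}(\bdf)$ (which is uniformly bounded since $\zeta_{jm}\zeta_{jn}/(\sum_m\zeta_{jm})^2\le 1$). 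You instead factor $K_j$ as the classical log-sum-exp precomposed with the affine map $\bdf\mapsto T_j\bdf+\bdc_j$, absorbing the weights $s_{jm}$ into the constants $\ln s_{jm}$, so that convexity and Fr\'echet differentiability are inherited immediately, and the uniform bound $0\preceq\nabla^2K_j(\bdf)\preceq\|T_j\|^2\Id$ follows from the standard fact that the Hessian of log-sum-exp is a softmax covariance matrix of operator norm at most one. Your route is shorter, produces an explicit global Lipschitz constant, and explicitly handles the degenerate case $s_{jm}=0$, which the paper glosses over. What the paper's more computational argument buys in exchange is the closed-form objects $\bar{\bdomega}_j$ and $\zeta_{jm}$ and the explicit gradient formula, which are reused verbatim in the construction of the operators $R_j(\bdf,\tilde{\bdf})$ in \cref{lem:R_f}; your argument would need to unpack the chain rule once more to recover them, but that is a matter of presentation, not of correctness.
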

\begin{proof}
The proof is given in \cref{sec:convex_lips_g_spec_CT}. 
\end{proof}

As a result, the proposed algorithm framework can be used to solve the image reconstruction problem 
for spectral \ac{CT} in \cref{eq:specific_opt}. By \cref{def:proximal}, \cref{eq:resolvent} and the 
definitions of $G$, $F^{\ast}$ and $E^{\ast}$ as the above, 
we have $(\Id + \tau\partial G)^{-1}(\bdf) = \proj_{\domain}(\bdf)$, 
$\quad (\Id + \sigma_{\!\!A} \partial E^{\ast})^{-1}(\bdv) = \proj_{\textsf{Box}(\lambda)}(\bdv)$ and
\[
(\Id + \sigma_{\!\!K} \partial F^{\ast})^{-1}(\bdu) = \frac{\bdu - \sigma_{\!\!K} \bdg}{\sigma_{\!\!K} + 1}, 
\]
where $\proj_{\domain}$ and $\proj_{\textsf{Box}(\lambda)}$ denote the orthogonal projection 
operators onto  $\domain$ and $\textsf{Box}(\lambda)$ respectively. Hence, the proposed scheme 
in \cref{eq:algo_proposed1} can be explicitly written as 
\begin{subequations}\label{eq:algo_proposed1_explicit}
\begin{align}
\label{eq:updatef_1_explicit} \bdf^{n+1} &= \proj_{\domain}\Bigl(\bdf^n - \tau \bigl[\bigl(\nabla\bdK(\bdf_{\theta}^n)\bigr)^{\tra}\bdu^n + \nabla^{\tra}\bdv^n \bigr] \Bigr), \\
\label{eq:acceleratexf_1_explicit} \bdf_{\theta}^{n+1} &= \bdf^{n+1} + \theta(\bdf^{n+1}  - \bdf^n), \\
\label{eq:updated_1_explicit_u}  \bdu^{n+1} &=  \frac{\bdu^n + \sigma_{\!\!K} \bigl(\bdK(\bdf_{\theta}^{n+1}) - \bdg\bigr)}{\sigma_{\!\!K} + 1}, \\
\label{eq:updated_1_explicit_v}  \bdv^{n+1} &= \proj_{\textsf{Box}(\lambda)}\bigl(\bdv^n + \sigma_{\!\!A} \nabla\bdf_{\theta}^{n+1}\bigr). 
\end{align} 
\end{subequations}
Similarly, we can easily figure out the other specific schemes using \cref{eq:algo_proposed2}--\cref{eq:Linearized_NL_PDHGM_2}.

\subsection{Convergence analysis for $\whu=0$}\label{subsec:thm_nls}

Note that $\whu$ is an optimal dual variable which corresponds to 
the nonlinear operator in problem \cref{eq:mini_max_problem}. 
In \cref{sec:convergence_analysis}, we analyzed the convergence of the proposed schemes
under the case of $\whu \neq 0$ as in \cref{ass:nonlinear_restrict}. 
Here we further analyze the convergence for the case $\whu=0$ specially to  
scheme \cref{eq:algo_proposed1_explicit} in spectral CT reconstruction. 
Recalling the optimal conditions in \cref{eq:optimal_conditions}, and by \cref{eq:specific_F_star}, 
we have $\whu+\bdg - \bdK(\whf) = 0$. Thus, $\bdK(\whf)=\bdg$ when $\whu = 0$. 

As a representative, in what follows we 
prove the convergence of scheme \cref{eq:algo_proposed1_explicit}. 
First of all, we give the following lemma. 
\begin{lemma}\label{lem:R_f}
Let the component of $\bdK(\bdf)$ be defined as \cref{eq:log_sum_exp}, and $b_{dm} > 0$ for $1\le d \le D$ and $1\le m \le M$. There are a family of uniformly bounded operators $R_j(\bdf,\tilde{\bdf})$ such that 
\begin{equation}\label{eq:Kj_Rj}
    \nabla K_j(\bdf)=R_j(\bdf,\tilde{\bdf})\nabla K_j(\tilde{\bdf}) 
\end{equation}
for any $\bdf,\tilde{\bdf}\in \mathbb{R}^{DN}_{+}$. Furthermore, 
define $R(\bdf,\tilde{\bdf}) := \diag\bigl(R_1(\bdf,\tilde{\bdf}), \ldots, R_J(\bdf,\tilde{\bdf})\bigr)$. 
There exists a positive constant $C_R$ such that 
\[
    \|R(\bdf,\tilde{\bdf})-I\|\le C_R\|\bdf -\tilde{\bdf}\|. 
\] 
\end{lemma}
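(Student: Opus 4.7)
The plan is to exploit the rank-one block structure of $\nabla K_j$. A direct chain-rule computation shows $\nabla_{\bdf_{\!\!d}} K_j(\bdf) = -c_{jd}(\bdf)\bda_j$, where
\[
c_{jd}(\bdf) := \sum_{m=1}^M b_{dm}\, p_{jm}(\bdf), \qquad
p_{jm}(\bdf) := \frac{s_{jm}\exp\bigl(-\sum_{d'} b_{d'm}\bda_j^{\tra}\bdf_{\!\!d'}\bigr)}{\sum_{m'}s_{jm'}\exp\bigl(-\sum_{d'} b_{d'm'}\bda_j^{\tra}\bdf_{\!\!d'}\bigr)}
\]
are the softmax weights. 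Thus every block of $\nabla K_j(\bdf)$ is a scalar multiple of the common vector $\bda_j$, which is the key structural observation driving the proof.

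Guided by this structure, I would define $R_j(\bdf,\tilde{\bdf})$ as the block-diagonal operator whose $d$-th diagonal block is the scalar $c_{jd}(\bdf)/c_{jd}(\tilde{\bdf})$ times the identity on $\mathbb{R}^N$. Then $R_j(\bdf,\tilde{\bdf})\nabla K_j(\tilde{\bdf}) = \nabla K_j(\bdf)$ follows block-by-block, and $R_j(\bdf,\bdf)=I$ trivially. For uniform boundedness, the identities $\sum_m p_{jm}(\bdf)=1$ and $p_{jm}\ge 0$ yield the two-sided bound
\[
\min_m b_{dm} \le c_{jd}(\bdf) \le \max_m b_{dm},
\]
so the strict positivity hypothesis $b_{dm}>0$ (over the finite index set) gives a uniformly positive lower bound on $c_{jd}$ independent of $\bdf$, and hence $\|R_j(\bdf,\tilde{\bdf})\|\le \max_{d,m}b_{dm}/\min_{d,m}b_{dm}$ uniformly in the arguments.

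For the Lipschitz estimate, observe that $R_j - I$ is block-diagonal with blocks $[(c_{jd}(\bdf)-c_{jd}(\tilde{\bdf}))/c_{jd}(\tilde{\bdf})]\cdot I$, and that $R-I$ is further block-diagonal over $j$. It therefore suffices to show that each $c_{jd}$ is globally Lipschitz on $\mathbb{R}_{+}^{DN}$ with a uniform constant. Via the standard softmax identity $\partial p_{jm}/\partial t_{k} = p_{jm}(\delta_{mk}-p_{jk})$, bounded in absolute value by $1$, composed through the chain rule with the linear map $\bdf\mapsto \bigl(\sum_d b_{dm}\bda_j^{\tra}\bdf_{\!\!d}\bigr)_m$, one obtains a uniform bound on $\|\nabla c_{jd}(\bdf)\|$ depending only on $\|\bda_j\|$ and $\{b_{dm}\}$. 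Combining this Lipschitz constant for $\bdc_j$ with the uniform lower bound on $c_{jd}(\tilde{\bdf})$ from the previous paragraph then yields the desired constant $C_R$. The main obstacle is that the estimate must be \emph{global} on the unbounded domain $\mathbb{R}_{+}^{DN}$; this is possible precisely because the softmax Jacobian is uniformly bounded in operator norm, and because the strict positivity $b_{dm}>0$ prevents the denominator $c_{jd}(\tilde{\bdf})$ from degenerating anywhere on the domain.
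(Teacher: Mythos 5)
Your proposal is correct and follows essentially the same route as the paper: your $c_{jd}(\bdf)$ is exactly the paper's $\bar{\bdomega}_j(\bdf)^{\tra}\bdb_{\!d}$, the block-diagonal definition of $R_j$ with scalar ratios $c_{jd}(\bdf)/c_{jd}(\tilde{\bdf})$ is identical, and both the uniform boundedness (probability weights plus $b_{dm}>0$) and the Lipschitz estimate (mean value theorem with a uniform bound on the softmax Jacobian, divided by the uniform lower bound $\min_m b_{dm}$) match the paper's argument.
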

\begin{proof}
The proof is referred to \cref{sec:proof_R_f}. 
\end{proof}

Then we have the following result.
\begin{lemma}\label{lem:local_property_g_hat}
Assume the conditions in \cref{lem:R_f} hold. If $\rho_{\bdf} \in (0, 1/C_R)$, 
\begin{align}\label{local_property_g}
    \|\bdK(\bdf)-\bdK(\tilde{\bdf})-\nabla \bdK(\tilde{\bdf})(\bdf-\tilde{\bdf})\|\le \frac{\rho_{\bdf} C_R}{1-\rho_{\bdf} C_R} \|\bdK(\bdf)-\bdK(\tilde{\bdf})\| 
\end{align}
for any $\bdf,\tilde{\bdf}\in \mathcal{B}_{\whf}(\rho_{\bdf})$. 
\end{lemma}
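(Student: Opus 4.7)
The plan is to start from the fundamental theorem of calculus representation of the Taylor remainder, exploit the factorization of $\nabla K_j$ supplied by \cref{lem:R_f}, and then close the bound with a self-referential rearrangement that produces the denominator $1-\rho_{\bdf} C_R$.

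First I would write, for $\bdf_t := \tilde{\bdf} + t(\bdf - \tilde{\bdf})$ with $t\in[0,1]$,
\begin{equation*}
\bdK(\bdf) - \bdK(\tilde{\bdf}) - \nabla \bdK(\tilde{\bdf})(\bdf-\tilde{\bdf}) = \int_{0}^{1}\bigl[\nabla \bdK(\bdf_t) - \nabla \bdK(\tilde{\bdf})\bigr](\bdf-\tilde{\bdf})\,dt.
\end{equation*}
The crucial step is now to apply \cref{lem:R_f} component-wise: for each $j$, $\nabla K_j(\bdf_t) = R_j(\bdf_t,\tilde{\bdf})\nabla K_j(\tilde{\bdf})$, which assembled together reads $\nabla \bdK(\bdf_t) = R(\bdf_t,\tilde{\bdf})\nabla \bdK(\tilde{\bdf})$. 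Substituting and setting $\bdg_0 := \nabla \bdK(\tilde{\bdf})(\bdf-\tilde{\bdf})$, which is constant in $t$, I would pull $\bdg_0$ out of the integral to obtain
\begin{equation*}
\bdK(\bdf) - \bdK(\tilde{\bdf}) - \bdg_0 = \int_{0}^{1}\bigl[R(\bdf_t,\tilde{\bdf}) - I\bigr]\,dt \cdot \bdg_0.
\end{equation*}

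Next I would bound the integral using the Lipschitz estimate $\|R(\bdf_t,\tilde{\bdf})-I\|\le C_R\|\bdf_t - \tilde{\bdf}\| = tC_R\|\bdf-\tilde{\bdf}\|$. Since $\bdf,\tilde{\bdf}\in\mathcal{B}_{\whf}(\rho_{\bdf})$ gives $\|\bdf-\tilde{\bdf}\|\le 2\rho_{\bdf}$, integration yields $\int_{0}^{1}\|R(\bdf_t,\tilde{\bdf})-I\|\,dt \le \rho_{\bdf} C_R$, and hence
\begin{equation*}
\|\bdK(\bdf) - \bdK(\tilde{\bdf}) - \bdg_0\|\le \rho_{\bdf} C_R\,\|\bdg_0\|.
\end{equation*}

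The final step — and the one that produces the asserted $1/(1-\rho_{\bdf} C_R)$ denominator — is a self-referential rearrangement. By the triangle inequality,
\begin{equation*}
\|\bdg_0\|\le \|\bdK(\bdf)-\bdK(\tilde{\bdf})\| + \|\bdK(\bdf)-\bdK(\tilde{\bdf}) - \bdg_0\|\le \|\bdK(\bdf)-\bdK(\tilde{\bdf})\| + \rho_{\bdf} C_R\,\|\bdg_0\|.
\end{equation*}
The hypothesis $\rho_{\bdf} C_R<1$ lets me solve for $\|\bdg_0\|$, and substituting back into the previous display finishes the proof. The only real obstacle is recognizing that the bound must be closed self-referentially rather than directly; everything else is bookkeeping on integrals and norms, and the condition $\rho_{\bdf}\in(0,1/C_R)$ is precisely what makes the rearrangement valid.
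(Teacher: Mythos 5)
Your proposal is correct and follows essentially the same route as the paper's proof: the fundamental-theorem-of-calculus form of the Taylor remainder, the factorization $\nabla \bdK(\bdf_t)=R(\bdf_t,\tilde{\bdf})\nabla\bdK(\tilde{\bdf})$ from \cref{lem:R_f}, the bound $\int_0^1\|R(\bdf_t,\tilde{\bdf})-I\|\,dt\le\tfrac12 C_R\|\bdf-\tilde{\bdf}\|\le\rho_{\bdf}C_R$, and finally the triangle-inequality rearrangement that converts the bound in terms of $\|\nabla\bdK(\tilde{\bdf})(\bdf-\tilde{\bdf})\|$ into one in terms of $\|\bdK(\bdf)-\bdK(\tilde{\bdf})\|$, producing the $1/(1-\rho_{\bdf}C_R)$ factor. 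The paper performs that last ``self-referential'' step implicitly; you have merely made it explicit.
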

\begin{proof}
See \cref{sec:proof_local_property_g_hat} for the proof. 
\end{proof}

In what follows we have the theorem on the convergence of 
scheme \cref{eq:algo_proposed1_explicit}. 

\begin{theorem}\label{thm:hat_u_eqs_0}
Assume that the conditions in \cref{lem:M_self_adjoint} and \cref{lem:R_f} hold, and $\whu=0$.
Let $\rho_{\bdf} \in (0, 1/C_R)$, $\rho_{\bdu} > 0$, $\rho_{\bdv}>0$ and $\eta = \rho_{\bdf} C_R / (1-\rho_{\bdf} C_R)$. 
Suppose initial point $\bdbeta^0 $ satisfies \cref{eq:initial_point_assumption}, and 
step sizes $\tau$, $\sigma_{\!\!K}$ further satisfy
\begin{align} \label{special_step_restrict}
      \tau\sigma_{\!\!K} C_{\!K}^2 < (1-\eta)s(1-\kappa),\quad \tau<\frac{\kappa}{6L\rho_{\bdu}},\quad \sigma_{\!\!K} > \frac{\eta}{2(1-\eta)}.
\end{align}
Then iterate sequence $\{\bdf^n\}$ generated by \cref{eq:algo_proposed1_explicit} converges to some $\whf^{\prime}$ such that $\bdK(\whf^{\prime})=\bdg$.
\end{theorem}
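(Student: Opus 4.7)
The plan is to reuse the descent-estimate machinery of \cref{thm:basis convergence}, but since \cref{ass:nonlinear_restrict} is unavailable when $\whu=0$, I would substitute the uniform control of the linearization error furnished by \cref{lem:local_property_g_hat}. The key observation is that the optimality conditions \cref{eq:optimal_conditions} combined with \cref{eq:specific_F_star} force $\bdK(\whf) = \bdg$ when $\whu=0$, and scheme \cref{eq:algo_proposed1_explicit} admits the dual identity $\bdK(\bdf_{\theta}^{n+1}) - \bdg = ((\sigma_{\!\!K}+1)\bdu^{n+1} - \bdu^n)/\sigma_{\!\!K}$. This lets any quantity proportional to $\|\bdK(\bdf_{\theta}^{n+1}) - \bdg\|$ be re-expressed through the dual iterates, which is the structural mechanism that replaces \cref{ass:nonlinear_restrict}.

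First I would rewrite the iteration as $0\in \wtH^{\one}_{n+1}(\bdbeta^{n+1}) + M^{\one}_{n+1}(\bdbeta^{n+1}-\bdbeta^{n})$ and test this inclusion against $\bdbeta^{n+1}-\widehat{\bdbeta}$, following the template of \cref{lemma: alg_estimate}. Monotonicity of $\partial G$ and $\partial E^{\ast}$ together with the $1$-strong monotonicity of $\partial F^{\ast}$ (read off from \cref{eq:specific_F_star}) yields a non-negative sum plus the reserve term $\|\bdu^{n+1}\|^2$. The cross-terms involving $\bdK$ and $\nabla\bdK$ previously controlled by \cref{ass:nonlinear_restrict} would, after using $\bdK(\whf)=\bdg$ and $\whu=0$, reduce to
\[
\bigl\langle \bdK(\bdf_{\theta}^{n+1}) - \bdg - \nabla\bdK(\bdf_{\theta}^{n+1})(\bdf_{\theta}^{n+1}-\whf),\,\bdu^{n+1}\bigr\rangle
\]
plus Lipschitz-type remainders of order $L\rho_{\bdu}(\|\bdf^{n+1}-\bdf^n\|^2+\|\bdf_{\theta}^{n+1}-\bdf_{\theta}^n\|^2)$ supplied by \cref{ass:locally_Lipschitz}. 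Applying \cref{lem:local_property_g_hat} bounds the first factor by $\eta\|\bdK(\bdf_{\theta}^{n+1})-\bdg\|$, and invoking the dual identity converts the whole cross-term into a quantity of the form $c(\eta,\sigma_{\!\!K})(\|\bdu^{n+1}\|^2+\|\bdu^{n}\|^2)$. This is absorbed into the strong-convexity reserve precisely when $\sigma_{\!\!K}>\eta/(2(1-\eta))$, matching the third condition of \cref{special_step_restrict}.

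Combining the previous bound with \cref{lem:M_self_adjoint} (so that $M^{\one}_{n+1}\succeq B_1$) and using $\tau<\kappa/(6L\rho_{\bdu})$ to dominate the primal Lipschitz remainders inside $B_1$, I would arrive at a Fejér-type inequality of the form
\[
\tfrac{1}{2}\|\bdbeta^{n+1}-\widehat{\bdbeta}\|_{M^{\one}_{n+2}}^2 + c_1\|\bdu^{n+1}\|^2 + c_2\|\bdf^{n+1}-\bdf^n\|^2 \le \tfrac{1}{2}\|\bdbeta^{n}-\widehat{\bdbeta}\|_{M^{\one}_{n+1}}^2
\]
with $c_1,c_2>0$. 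An inductive neighbourhood argument mirroring \cref{lemma:ensure_beta_n_neigborhood} keeps the iterates inside $\mathcal{O}(\rho_{\bdf},\rho_{\bdu},\rho_{\bdv})$, so that $\{\bdbeta^n\}$ is bounded and $\|\bdu^{n+1}\|^2$ and $\|\bdf^{n+1}-\bdf^n\|^2$ are summable. Hence $\bdu^n\to 0$ and $\bdf^{n+1}-\bdf^n\to 0$, and feeding this back into the dual identity yields $\bdK(\bdf_{\theta}^{n+1})-\bdg\to 0$; every cluster point $\whf'$ of the bounded sequence $\{\bdf^n\}$ therefore satisfies $\bdK(\whf')=\bdg$. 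Full convergence $\bdf^n\to\whf'$ is then obtained by an Opial-type argument, since $(\whf',0,\bdv')$ is itself a solution of \cref{eq:optimal_conditions} for a suitable $\bdv'$, and the Fejér estimate above can be re-run around any such limit.

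The main obstacle I anticipate is the bookkeeping in the second step: because \cref{lem:local_property_g_hat} bounds the linearization error only \emph{linearly} in $\|\bdK(\bdf_{\theta}^{n+1})-\bdg\|$ rather than quadratically in $\|\bdf-\whf\|$ as \cref{ass:nonlinear_restrict} would, the absorption into $\gamma_{F^{\ast}}\|\bdu^{n+1}\|^2$ must be executed carefully through the dual identity, and it is exactly this linear-versus-quadratic trade-off that dictates the precise constants $\eta/(2(1-\eta))$ and $(1-\eta)s(1-\kappa)$ appearing in \cref{special_step_restrict}.
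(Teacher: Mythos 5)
Your proposal is correct and follows essentially the same route as the paper's proof: exploit $\bdK(\whf)=\bdg$, bound the linearization error via \cref{lem:local_property_g_hat}, convert $\|\bdK(\bdf_{\theta}^{n+1})-\bdg\|$ into dual iterates through the update identity $\bdK(\bdf_{\theta}^{n+1})-\bdg=((\sigma_{\!\!K}+1)\bdu^{n+1}-\bdu^n)/\sigma_{\!\!K}$, absorb the result into the strong-convexity reserve $\|\bdu^{n+1}\|^2$ and the metric surplus, and close with the neighbourhood induction and \cref{thm:basis convergence}. The only cosmetic difference is in the final splitting: the paper writes $(\sigma_{\!\!K}+1)\bdu^{n+1}-\bdu^n=\sigma_{\!\!K}\bdu^{n+1}+(\bdu^{n+1}-\bdu^n)$ so that the residual $\tfrac{\eta}{2\sigma_{\!\!K}}\|\bdu^{n+1}-\bdu^n\|^2$ is eaten by the extra slack in $M^{\one}_{n+1}$ created by the strengthened condition $\tau\sigma_{\!\!K}C_{\!K}^2<(1-\eta)s(1-\kappa)$, which is exactly how the constant $\eta/(2(1-\eta))$ you identified arises.
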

\begin{proof}
The proof is referred to \cref{proof:case_spectral_CT}.
\end{proof}
\begin{remark}
By $\bdK(\whf^{\prime})=\bdK(\whf)$, $\whf^{\prime}\in\mathcal{B}_{\whf}(\rho_{\bdf})$ and \cref{lem:local_property_g_hat}, we get $\nabla \bdK(\whf)(\whf - \whf^{\prime})=0$.
If $\nabla \bdK(\whf)$ is column full rank, which is available when the scan is sufficient, we obtain $\whf^{\prime}=\whf$, 
otherwise, $\whf^{\prime}$ is not definitely equivalent to $\whf$.
\end{remark}

\subsection{Numerical experiments} 
\label{subsec:tests}
In this section, we describe the implementation details first, then apply the image results and evaluation metrics to 
show the effectiveness of the proposed algorithm framework, without loss of generality, mainly including the generated schemes in  \cref{eq:algo_proposed1}, \cref{eq:algo_proposed2}, \cref{eq:Exact_NL_PDHGM} and \cref{eq:Linearized_NL_PDHGM}, 
for brevity, named EPD-Exact, EPD-Linerized, Exact-PDHGM, and Linearized-PDHGM respectively.  

We adopt the simulated datasets with/without noise that generated by \cref{eq:log_sum_exp} to 
evaluate the proposed schemes for DECT. The truth image is obtained from a 2D 
slice of the modified 3D forbild head phantom, which is shown in the right side 
of \cref{fig:normalized spectra}. The water and bone basis truth images take gray values 
over $[0, 1]$, which are digitized by using $128\!\times\!128$ pixels and defined on a 
fixed domain $[-5, 5]\!\times\![-5, 5]$, as shown in \cref{fig:180_noisefree_contrast} respectively.
\begin{figure}[htbp]
    \centering
    \includegraphics[scale=0.42]{./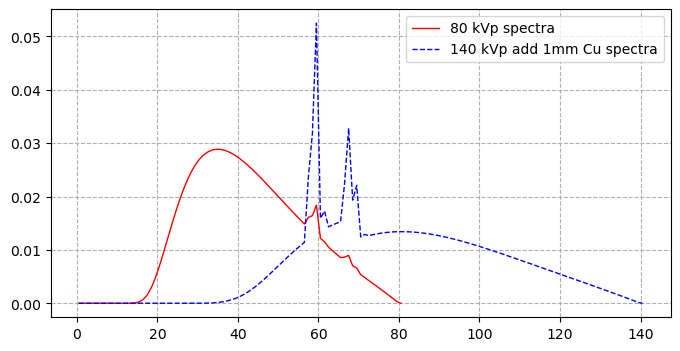}\hspace{4mm}
    \includegraphics[scale=0.36]{./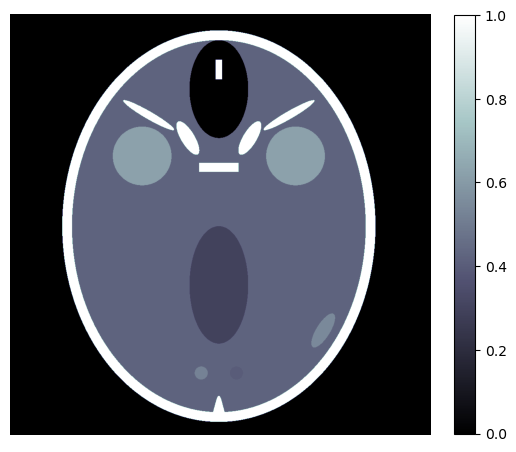}
    \caption{Two normalized X-ray spectra (left) and a 2D slice of the modified forbild phantom (right) used in tests.}
    \label{fig:normalized spectra}
\end{figure}

In this paper, we assume that the used energy spectra are known, which are simulated by the software 
SpectrumGUI, an open source X-ray spectra simulator. 
We used the discrete X-ray spectra of GE Maxiray 125 X-ray tube with tube voltages 80 kVp and 140 kVp,  
where the 140 kVp spectra is filtered by 1 mm copper. We normalize it and use the same X-ray spectra for different rays. 
The normalized spectra curve are shown in the left side of \cref{fig:normalized spectra}. The 
coefficients $\{b_{km}\}$ are obtained for water and bone basis materials from the database of 
National Institute of Standard Technology. 

We perform forward/backward projections by the tool in \cite{VaPa16}, or calculate the projection 
matrix by the algorithm in \cite{chen2020fast}. In all of the tests below, we use 2D parallel beam 
scanning geometry with 181 bins, which is evenly distributed on the range over $[-7.05, 7.05]$. 
Note that here the scanning geometry is unessential since it would only change the projection 
matrix. The routines were operated on ThinkStation with Xeon E5-2620 V4 2.10 GHz CPU, 
64GB ROM and TITAN Xp GPU. 

\subsubsection{Test 1: Simulated noise-free data}\label{test1_noisefree_data}

 For noise-free case, the measured data is simulated by $\bdg=\bdK(\bdf_{\mathrm{truth}})$, 
 where $\bdf_{\mathrm{truth}}$ denotes truth image. 
 Instead of using full-scan configuration, we consider a specific non-standard scanning configuration. 
The $180$ views for each of low (80 kVp) and high (140 kVp) energy spectra are uniformly distributed 
over $[0, \pi)$ and $[\theta_0, \theta_0 + \pi)$, respectively, where constant $\theta_0$ denotes 
the angular gap. Here we take it as $\pi/360$. We choose zero images as the initial point, 
and take $\lambda=0$ and $\tau=\sigma_{\!\!K}=0.2$. 

To validate numerical convergence, relative error, relative data fitting functional  
and relative \ac{TV} functional are acted as convergent conditions, which are defined as 
\begin{equation}\label{eq:relative_error}
RE_{\bdf}^{(n)} = \frac{\|\bdf^{n} - \bdf_{\mathrm{truth}}\|_2}{\|\bdf_{\mathrm{truth}}\|_2} , \quad RD_{\bdf}^{(n)} = \frac{\|\bdK(\bdf^{n}) - \bdg\|_2^2}{\|\bdg\|_2^2},\quad RT_{\bdf}^{(n)} = \frac{|\|\nabla\bdf^{n}\|_1 - \|\nabla\bdf_{\mathrm{truth}}\|_1|}{ \|\nabla\bdf_{\mathrm{truth}}\|_1}, 
\end{equation}
respectively. 
These metrics can be used to verify the numerical convergence as iteration number $n \rightarrow \infty$ (also see \cite{chpan21}). 
Since $\bdf_{\mathrm{truth}}$ is known in numerical simulation, the use of metrics $RE_{\bdf}^{(n)}$ and $RT_{\bdf}^{(n)}$ is reasonable. 
 
 After $10^6$ iterations, the reconstructed water/bone basis images and 60/100 keV images are shown in the 
 second to fifth columns of \cref{fig:180_noisefree_contrast}, which are quite close to the truth images by visual comparison. 
\begin{figure}[htbp]
    \centering
    \vspace{-1mm}
    \includegraphics[trim={0 0 0 0},width=\textwidth]{./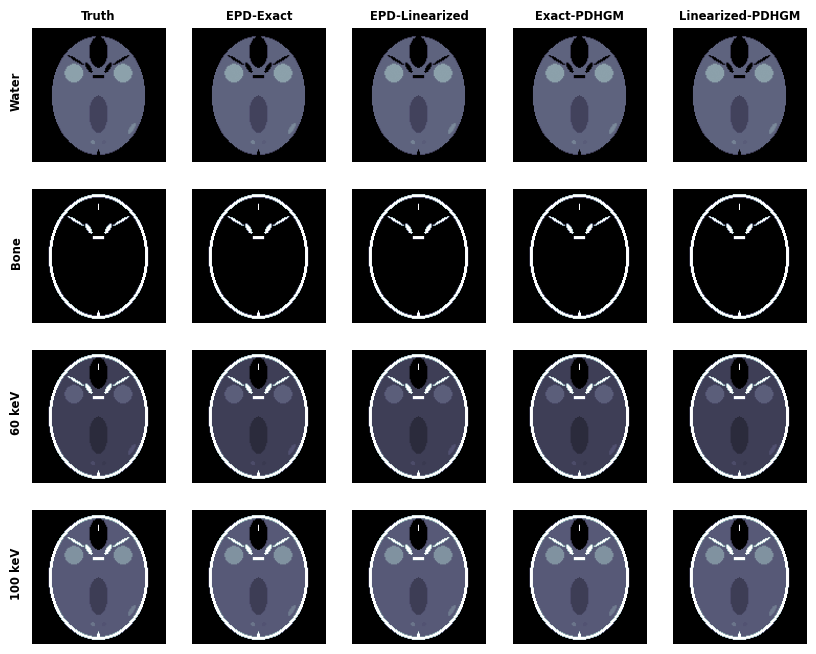}
    \vspace{-5mm}
    \caption{Reconstructed images after $10^6$ iterations using simulated noise-free data. From left to right: truth images, reconstructed results by schemes \cref{eq:algo_proposed1}, \cref{eq:algo_proposed2}, \cref{eq:Exact_NL_PDHGM} and \cref{eq:Linearized_NL_PDHGM}. From top to bottom: water basis image, bone basis image, 60 keV image and 100 keV image.}
    \vspace{-2mm}
\label{fig:180_noisefree_contrast}
\end{figure} 

\begin{figure}[htbp]
    \centering
    \includegraphics[width=0.33\textwidth]{./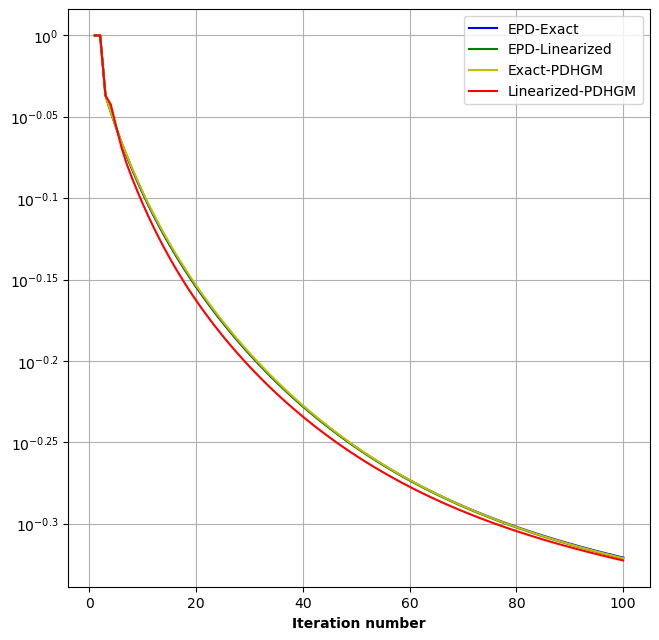}\hspace{-1mm}
    \includegraphics[width=0.33\textwidth]{./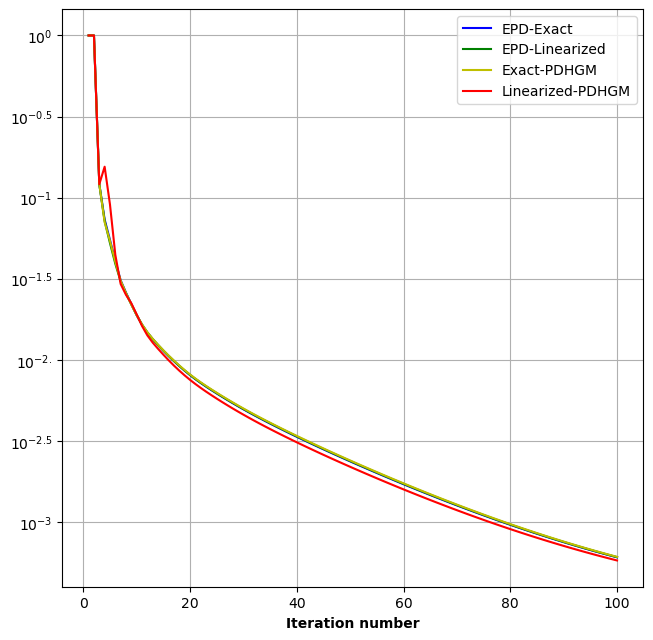}\hspace{-1mm}
    \includegraphics[width=0.33\textwidth]{./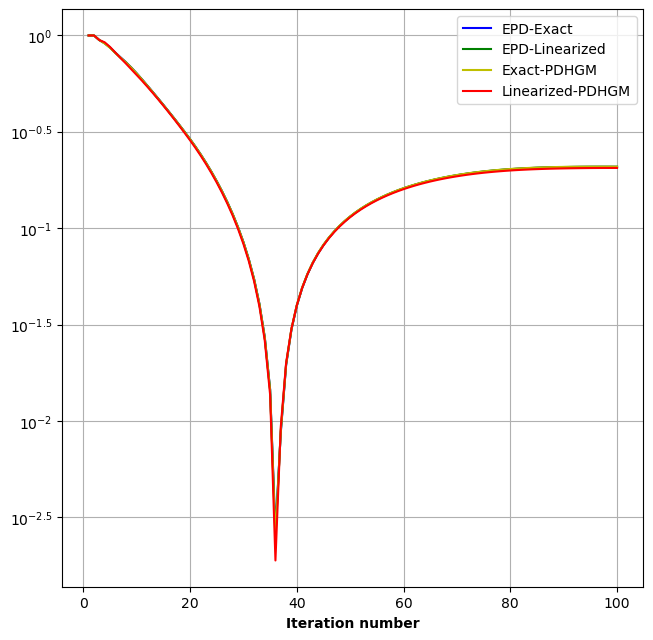}\\
    \includegraphics[width=0.33\textwidth]{./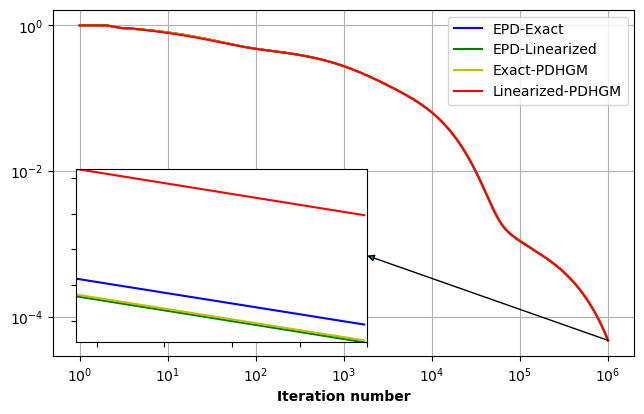}\hspace{-1mm}
    \includegraphics[width=0.33\textwidth]{./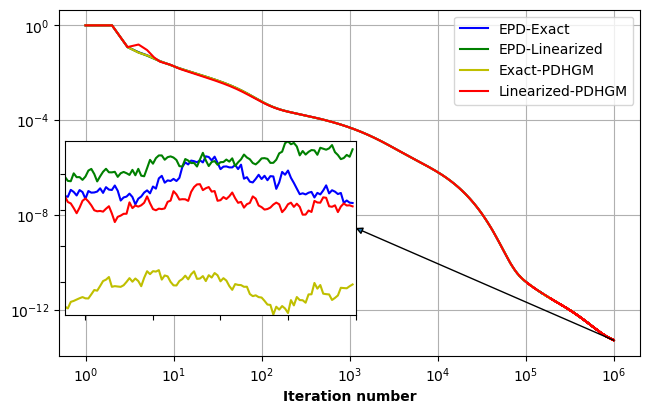}\hspace{-1mm}
    \includegraphics[width=0.33\textwidth]{./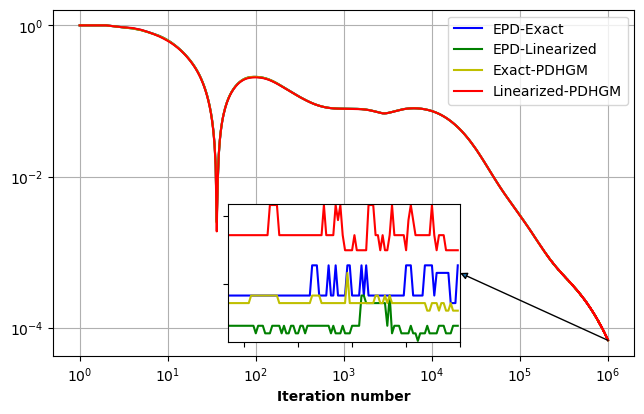}
    \caption{Convergent conditions $RE_{\bdf}^{(n)}$ (left), $RD_{\bdf}^{(n)}$ (middle) and $RT_{\bdf}^{(n)}$ (right) of schemes \cref{eq:algo_proposed1}, \cref{eq:algo_proposed2}, \cref{eq:Exact_NL_PDHGM} and \cref{eq:Linearized_NL_PDHGM} for simulated noise-free data,  as functions of iteration number $n$, where the first row shows the profiles of the first 100 iterations, and the embedded subgraphs of the second row show the last 100 iterations.}
    \label{fig:convergent_conds_0}
\end{figure} 
The convergent conditions are computed by \cref{eq:relative_error} for the iterative schemes and then 
plotted as functions of the iteration number in \cref{fig:convergent_conds_0}, which shows that the results reconstructed 
by the four schemes in \cref{eq:algo_proposed1}, \cref{eq:algo_proposed2}, \cref{eq:Exact_NL_PDHGM} and \cref{eq:Linearized_NL_PDHGM} are too close to distinguish by vision. We amplify the profiles for the fist 100 iterations shown in the first row of \cref{fig:convergent_conds_0} and the last 100 iterations shown in the embedded subgraphs in the second row of \cref{fig:convergent_conds_0}.  

We further use quantitative indexes to evaluate the reconstruction quility, such as the values of structural 
similarity (SSIM) \cite{wang2004image}, peak signal-to-noise ratio (PSNR), 
mean square error (MSE) and maximum pixel difference between the 
reconstructed image and corresponding truth (Max\_Diff). Remark that we use `1 - SSIM' instead of `SSIM' 
because the latter value is very close to but less than 1. Since the quantitative indexes by different schemes are quite similar, 
we only show those produced by scheme \cref{eq:algo_proposed1} in \cref{noisefree table}. 
\begin{table}[htbp]
  \centering
  \caption{Quantitative indexes for the result by scheme \cref{eq:algo_proposed1} using simulated noise-free data. }
    \begin{tabular}{crrrr}
    \hline
    \multicolumn{1}{l}{180 views, noise-free} & \multicolumn{1}{c}{1-SSIM} & \multicolumn{1}{c}{PSNR} & \multicolumn{1}{c}{MSE} & \multicolumn{1}{c}{Max\_Diff} \\
    \hline
    Water basis &  6.02$\times 10^{-8}$ &  {94.60} &  3.47$\times 10^{-10}$ &  7.63$\times 10^{-4}$ \\
    Bone basis &  7.51$\times 10^{-12}$ &  {131.86} &  6.41$\times 10^{-14}$ &  5.25$\times 10^{-6}$ \\
    60 keV &  {4.35$\times 10^{-9}$} &  {108.34} &  {1.47$\times 10^{-11}$} &  {1.57$\times 10^{-4}$} \\
    100 keV &  {3.04$\times 10^{-9}$} &  {109.96} &  {1.01$\times 10^{-11}$} &  {1.30$\times 10^{-4}$} \\
    \hline
    \end{tabular}%
  \label{noisefree table}%
\end{table}%

In addition, we plot the horizontal slices to compare the 60/100 keV images by scheme \cref{eq:algo_proposed1} 
with the truth counterparts, which are shown in \cref{horizontal_noisefree_slice}. 
The locations of the slices are shown at the red line on upper left of each picture in \cref{horizontal_noisefree_slice}. 
\begin{figure}[ht]
    \centering\vspace{-2mm}
    \hspace{-3mm}
    \includegraphics[width=0.5\textwidth]{./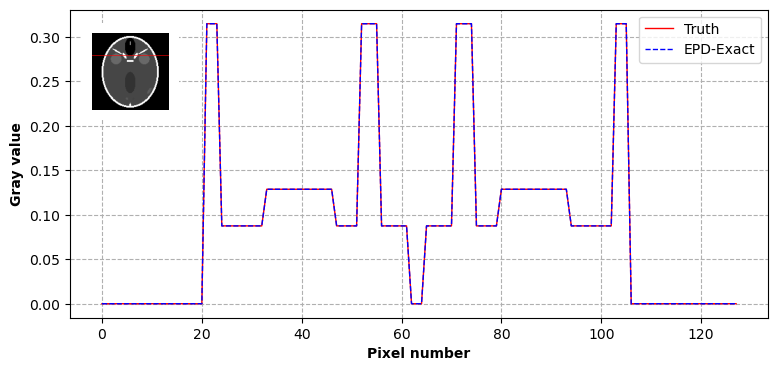}~
    \includegraphics[width=0.5\textwidth]{./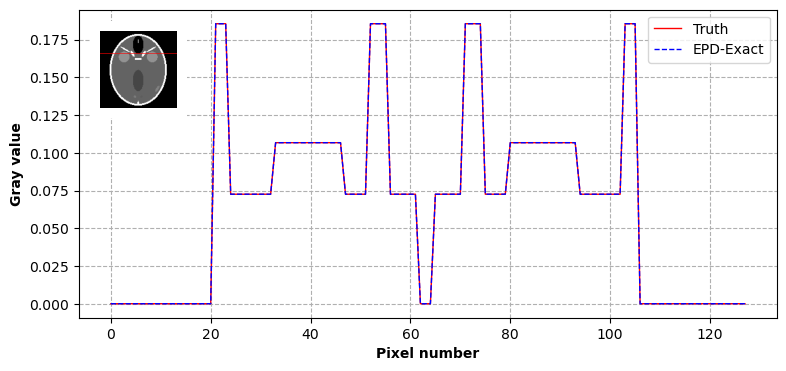}
    \vspace{-1mm}
    \caption{Horizontal slices of the reconstructed 60/100 keV images by scheme \cref{eq:algo_proposed1} and the truths for simulated noise-free data.}
    \label{horizontal_noisefree_slice}
\end{figure} 

\subsubsection{Test 2: Simulated noise data}
\label{sec:test_suite_2}

We create simulated noise data by adding Gaussian noise onto the 
noise-free data as in \cref{eq:nl_op_eq}, which corresponds to the use 
of $\ell_2$ data fitting term. The scanning configuration is the same as the simulated noise-free case 
in \cref{test1_noisefree_data}. The signal-noise ratio (SNR) of the noise data is about 27.11 dB, and we take regularization 
parameter $\lambda=10^{-6}$. We choose zero images as the initial point, 
and take $\tau=\sigma_{\!\!K}=\sigma_{\!\!A}=0.2$. After $10^5$ iterations, the reconstructed 60/100 keV images by 
schemes \cref{eq:algo_proposed1}, \cref{eq:algo_proposed2}, \cref{eq:Exact_NL_PDHGM} and \cref{eq:Linearized_NL_PDHGM} 
are shown in \cref{fig:180angle_noise1e3_contrast}. 
\begin{figure}[htbp]
    \centering
    \includegraphics[trim={0 0 0 0},width=\textwidth]{./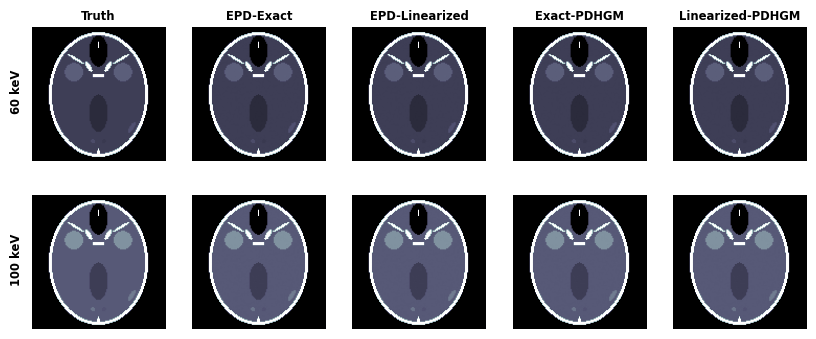}
     \vspace{-4mm}
    \caption{Reconstructed images after $10^5$ iterations using simulated noise data (about 27.11 dB). From left to right: truth images, reconstructed results by schemes \cref{eq:algo_proposed1}, \cref{eq:algo_proposed2}, \cref{eq:Exact_NL_PDHGM} and \cref{eq:Linearized_NL_PDHGM}. From top to bottom: 60 keV image and 100 keV image.}
\label{fig:180angle_noise1e3_contrast}
\end{figure}

The profiles of convergent conditions for schemes \cref{eq:algo_proposed1}, \cref{eq:algo_proposed2}, 
\cref{eq:Exact_NL_PDHGM} and \cref{eq:Linearized_NL_PDHGM} are plotted in \cref{fig:convergent_conds_noise1e3}, 
and the last 100 iterations are shown in the embedded subgraphs of \cref{fig:convergent_conds_noise1e3}. As we can see, 
these schemes have their own merits regarding convergence quality. 
The quantitative indexes of the results by scheme \cref{eq:algo_proposed1} are listed in \cref{tab:noise1e3}.
\begin{figure}[htbp]
    \centering
    \includegraphics[width=0.33\textwidth]{./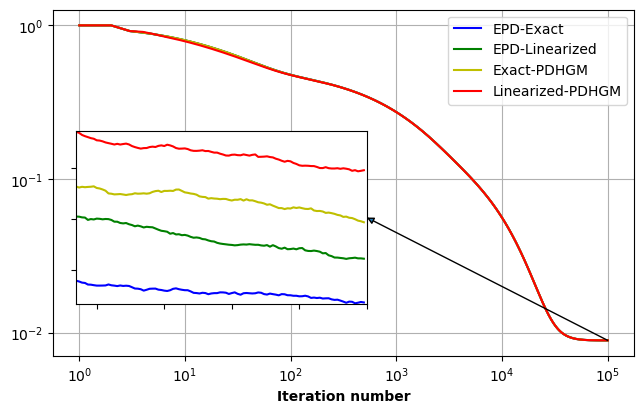}\hspace{-1mm}
    \includegraphics[width=0.33\textwidth]{./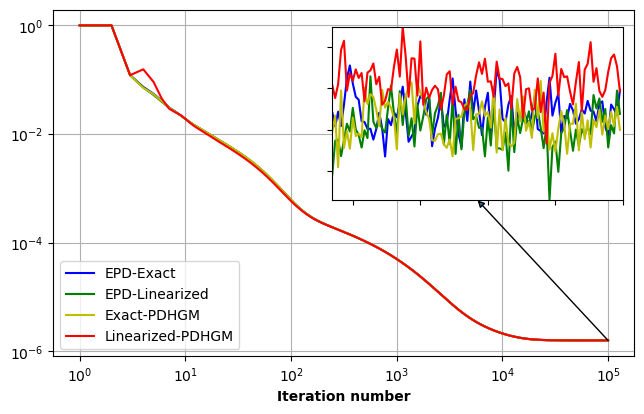}\hspace{-1mm}
    \includegraphics[width=0.33\textwidth]{./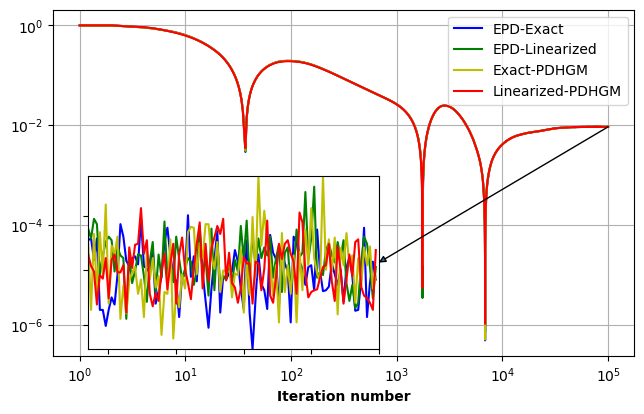}\vspace{-1mm}
    \caption{Convergent conditions $RE_{\bdf}^{(n)}$ (left), $RD_{\bdf}^{(n)}$ (middle) and $RT_{\bdf}^{(n)}$ (right) of schemes \cref{eq:algo_proposed1}, \cref{eq:algo_proposed2}, \cref{eq:Exact_NL_PDHGM} and \cref{eq:Linearized_NL_PDHGM} for simulated noise data (about 27.11 dB),  as functions of iteration number $n$, where the embedded subgraphs of the second row show the last 100 iterations.}
    \label{fig:convergent_conds_noise1e3}
\end{figure} 

\begin{table}[htbp]
  \centering
  \caption{Quantitative indexes for the results by scheme \cref{eq:algo_proposed1} using simulated noise data (180 views, 27.11 dB).}
    \begin{tabular}{crrrr}
    \hline
    \multicolumn{1}{l}{180 views, 27.11 dB} & \multicolumn{1}{c}{1-SSIM} & \multicolumn{1}{c}{PSNR} & \multicolumn{1}{c}{MSE} & \multicolumn{1}{c}{Max\_Diff} \\
    \hline
    60 keV &  {1.82$\times 10^{-5}$} &  {69.51} &  {1.12$\times 10^{-7}$} &  {3.22$\times 10^{-3}$} \\
    100 keV &  {1.78$\times 10^{-5}$} &  {70.01} &  {9.96$\times 10^{-8}$} &  {4.36$\times 10^{-3}$} \\
    \hline
    \end{tabular}
  \label{tab:noise1e3}
\end{table}

Similarly, we plot the horizontal slices of the reconstructed 60/100 keV images by scheme \cref{eq:algo_proposed1} 
and the truths in \cref{horizontal_noise1e3_slice}. We can see that the discrepancy between them is not obvious. 
\begin{figure}[htbp]
    \centering\hspace{-3mm}
    \includegraphics[width=0.5\textwidth]{./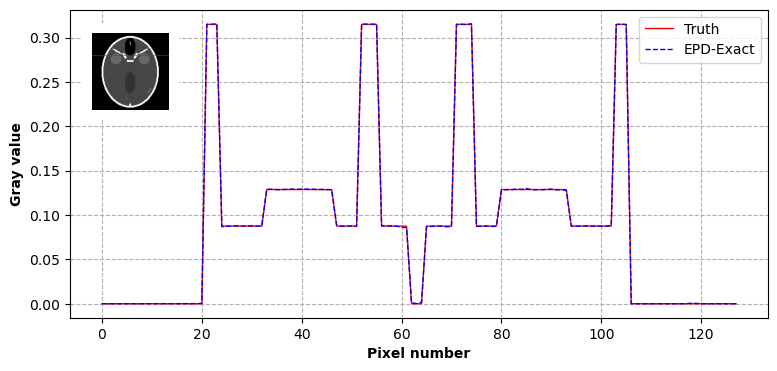}
    \includegraphics[width=0.5\textwidth]{./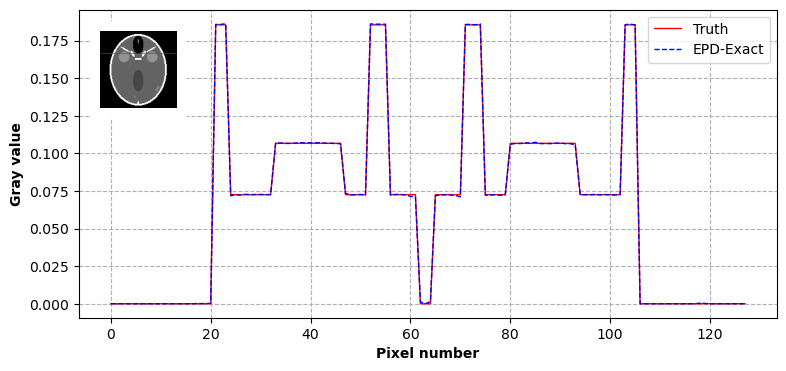}
    \caption{Horizontal slices of the reconstructed 60/100 keV results by scheme \cref{eq:algo_proposed1} and the truths for simulated noise data.}
    \label{horizontal_noise1e3_slice}
\end{figure}

\subsubsection{Test 3: sparse-view noise data}
\label{sec:test_suite_3} 

We test the proposed schemes with sparse-view noise data, which only contains 60 views for each low (80 kVp) 
and high (140 kVp) energy spectra respectively. The views of them are uniformly distributed over $[0,\pi)$ 
and $[\theta_0, \theta_0 +\pi)$ with $\theta_0 = \pi/120$. The noise data is generated as the way of the above test. 
The SNR of the data is about $27.14$ dB. We take regularization parameter $\lambda= 10^{-6}$, and 
choose zero images as the initial point, and take $\tau=\sigma_{\!\!K}=\sigma_{\!\!A}=0.2$. 
After $10^6$ iterations, the reconstructed result has achieved convergence,  
which is shown in \cref{fig:60angle_noise1e3_contrast}.  
\begin{figure}[htbp]
    \centering
    \includegraphics[trim={0 0 0 0},width=\textwidth]{./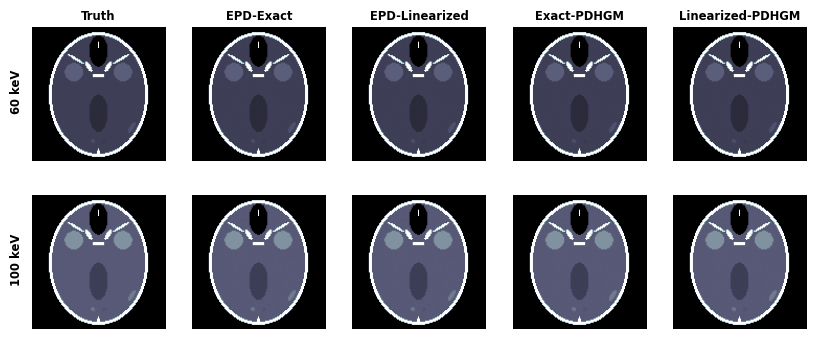}
     \vspace{-4mm}
    \caption{Reconstructed images after $10^6$ iterations using simulated sparse-view noise data (about 27.14 dB). 
    From left to right: truth images, reconstructed results by schemes \cref{eq:algo_proposed1}, \cref{eq:algo_proposed2}, 
     \cref{eq:Exact_NL_PDHGM} and \cref{eq:Linearized_NL_PDHGM}. 
     From top to bottom: 60 keV image and 100 keV image.} 
\label{fig:60angle_noise1e3_contrast}
\end{figure}

The convergent conditions are shown in \cref{fig:convergent_conds_sparse}, and the last 100 iterations are 
shown in the embedded subgraphs of \cref{fig:convergent_conds_sparse}. As we can see, the convergent 
conditions $RE_{\bdf}^{(n)}$, $RD_{\bdf}^{(n)}$ and $RT_{\bdf}^{(n)}$ have already converged. 
The performances of these schemes have only a little bit difference. 
\begin{figure}[htbp]
    \centering
    \includegraphics[width=0.33\textwidth]{./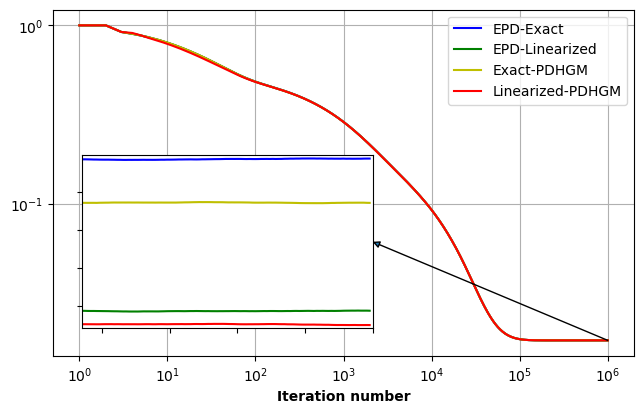}\hspace{-1mm}
    \includegraphics[width=0.33\textwidth]{./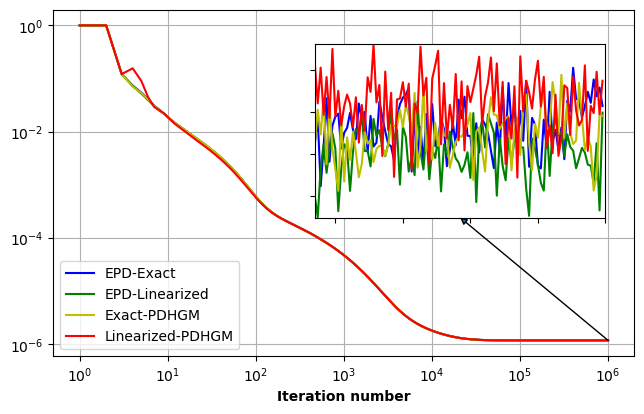}\hspace{-1mm}
    \includegraphics[width=0.33\textwidth]{./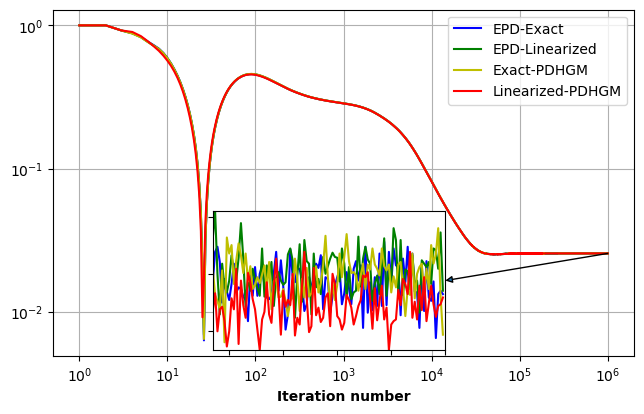}
    \caption{Convergent conditions $RE_{\bdf}^{(n)}$ (left), $RD_{\bdf}^{(n)}$ (middle) and $RT_{\bdf}^{(n)}$ (right) of schemes \cref{eq:algo_proposed1}, \cref{eq:algo_proposed2}, \cref{eq:Exact_NL_PDHGM} and \cref{eq:Linearized_NL_PDHGM} for simulated sparse-view noise data (about 27.14 dB),  as functions of iteration number $n$, where the subgraphs of the second row show the last 100 iterations.} 
    \label{fig:convergent_conds_sparse}
\end{figure} 

The quantitative indexes used to evaluate the image quality for scheme \cref{eq:algo_proposed1} are shown in \cref{tab:60angle1e3}, 
which illustrates good performance of the proposed scheme for sparse-view noise data. 
Furthermore, we plot the horizontal slices for the 60/100 keV images of scheme \cref{eq:algo_proposed1} and the truth images, 
which are shown in \cref{sparse_horizontal_noise1e3_slice}. We can see that the difference between them is tiny. 
\begin{table}[htbp]
  \centering 
  \caption{Quantitative indexes for the result by scheme \cref{eq:algo_proposed1} using simulated sparse-view noise data (60 views, 27.14 dB).} 
    \begin{tabular}{crrrr}
    \hline
    \multicolumn{1}{l}{60 views, 27.14 dB} & \multicolumn{1}{c}{1-SSIM} & \multicolumn{1}{c}{PSNR} & \multicolumn{1}{c}{MSE} & \multicolumn{1}{c}{Max\_Diff} \\
    \hline
    60 keV &  {7.75$\times 10^{-5}$} &  {63.35} &  {4.62$\times 10^{-7}$} &  {6.00$\times 10^{-3}$} \\
    100 keV &  {7.37$\times 10^{-5}$} &  {64.04} &  {3.94$\times 10^{-7}$} &  {1.03$\times 10^{-2}$} \\
    \hline
    \end{tabular}
  \label{tab:60angle1e3}%
\end{table}%

\begin{figure}[htbp]
    \centering\hspace{-3mm}
    \includegraphics[width=0.5\textwidth]{./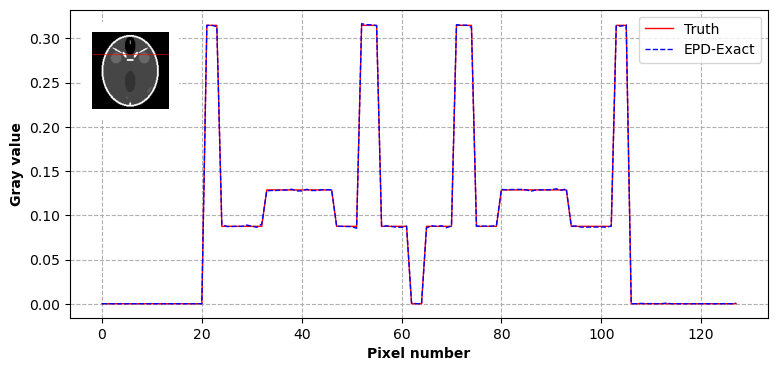}
    \includegraphics[width=0.5\textwidth]{./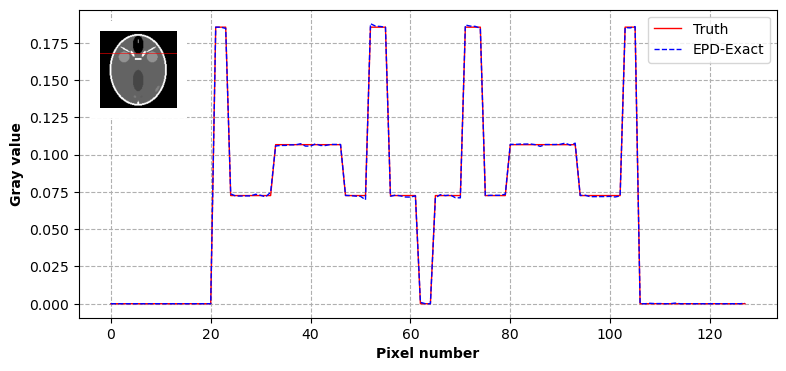}\vspace{-2mm}
    \caption{Horizontal slices of the reconstructed 60/100 keV results by scheme \cref{eq:algo_proposed1} and the truths for simulated sparse-view noise data.} 
    \label{sparse_horizontal_noise1e3_slice}
\end{figure}

\section{Conclusion}
\label{sec:conclusion}

In this work, we proposed an extended primal-dual algorithm framework 
in \cref{algo:proposed_NLPD_algorithm} for solving a class of general nonconvex problems 
in \cref{eq:opt_primal} with the operator $\bdK$ being nonlinear convex with respect to $\bdf$, 
which is motivated by the image reconstruction for several practical nonlinear imaging problems with such kind of forward 
operator. Note that model \cref{eq:opt_primal} also involves a kind of nonconvex sparse 
optimization model. Using the proposed framework, we put forward six different iterative  
schemes, where including the two that have been studied in the previous literatures \cite{va14,accle_Val19}. 
More importantly, we presented their complete mathematical explanation. 
We also established the relationship to existing algorithms, 
including primal-dual algorithm for convex problem, Exact/Linearized nonlinear \ac{PDHGM}, 
nonconvex primal-dual algorithm, and nonconvex \ac{ADMM}. With proper assumptions 
in \cref{subsec:Assumptions}, we proved the convergence of these schemes when the optimal dual 
variable regarding the nonlinear operator is non-vanishing, where the strong convexity assumption 
of $G$ is no longer required. 

To demonstrate the effectiveness of the proposed algorithm framework, we applied the generated iterative 
schemes to solve the image reconstruction problem in \cref{eq:specific_opt} for DECT. In particular, 
using special properties of the concrete problem, we proved the convergence of these customized 
schemes when the optimal dual variable regarding the nonlinear operator is vanishing. Moreover, 
the numerical experiments also showed that the proposed framework has good performance on the 
image reconstruction problem in DECT for various data with non-standard scanning configuration.

\appendix
\section{Theoretical proofs}\label{appendix:proofs} 

\subsection{Proof of \cref{lem:M_self_adjoint}}\label{proof:M_self_adjoint}
\begin{proof}
By \cref{eq:Mf}, we know $M(\bdf)$ is symmetric. For any $\bar{\bdbeta}=[\bar{\bdf}^{\tra}, \bar{\bdu}^{\tra}, \bar{\bdv}^{\tra}]^{\tra}\in \mathcal{X} \times \mathcal{Y}_K\times\mathcal{Y}_A$, we have
\begin{multline*}
    \bar{\bdbeta}^{\tra}\bigl( M(\bdf)-B_1(\bdf) \bigr)\bar{\bdbeta} 
    = \biggl\|\sqrt{\frac{s(1-\kappa)}{\tau}} \bar{\bdf} - \sqrt{\frac{\tau}{s(1-\kappa)}}[\nabla \bdK(\bdf)]^{\tra}\bar{\bdu}\biggr\|^2 \\ 
    + \biggl\|\sqrt{\frac{(1-s)(1-\kappa)}{\tau}} \bar{\bdf} - \sqrt{\frac{\tau}{(1-s)(1-\kappa)}}A^{\tra}\bar{\bdv}\biggr\|^2 \ge 0. 
\end{multline*}
Moreover, by Cauchy--Schwarz inequality, we obtain   
\[
 \bar{\bdbeta}^{\tra} B_1(\bdf) \bar{\bdbeta} 
    \ge  \frac{\delta}{\tau} \|\bar{\bdf}\|^2+ \frac{\|\bar{\bdu}\|^2}{\sigma_{\!\!K}}\biggl(1-\frac{\tau\sigma_{\!\!K}}{s(1-\kappa)}\|\nabla \bdK(\bdf)\|^2\biggr)+ \frac{\|\bar{\bdv}\|^2}{\sigma_{\!\!A}}\biggl(1-\frac{\tau\sigma_{\!\!A}}{(1-s)(1-\kappa)}\|A\|^2\biggr).
    \]
Using \cref{eq:step_assumption}, we have that the right side of the above inequality is nonnegative. Similarly, the following inequality is nonnegative
\begin{multline*}
    \bar{\bdbeta}^{\tra}\bigl( M(\bdf)- B_2(\bdf) \bigr)\bar{\bdbeta} 
    = \biggl\|\sqrt{\frac{\sigma_{\!\!K}}{1-\kappa}} \nabla \bdK(\bdf)\bar{\bdf} - \sqrt{\frac{1-\kappa}{\sigma_{\!\!K}}}\bar{\bdu}\biggr\|^2
    + \biggl\|\sqrt{\frac{\sigma_{\!\!A}}{1-\kappa}} A\bar{\bdf} - \sqrt{\frac{1-\kappa}{\sigma_{\!\!A}}}\bar{\bdv}\biggr\|^2. 
\end{multline*}
Using Cauchy--Schwarz inequality, we obtain   
\[
 \bar{\bdbeta}^{\tra} B_2(\bdf) \bar{\bdbeta} 
    \ge  \frac{\|\bar{\bdf}\|^2}{\tau}\biggl(1-\frac{\tau\sigma_{\!\!K}}{1-\kappa}\|\nabla \bdK(\bdf)\|^2-\frac{\tau\sigma_{\!\!A}}{1-\kappa}\|A\|^2\biggr) + \frac{\kappa}{\sigma_{\!\!K}}\|\bar{\bdu}\|^2+ \frac{\kappa}{\sigma_{\!\!A}}\|\bar{\bdv}\|^2 \ge 0.
    \]
Hence, we conclude the proof. 
\end{proof}

\subsection{Proof of \cref{lemma: alg_estimate} }\label{proof:alg_estimate}
\begin{proof}
 By the definition of $M_{n+1}^{\one}$,  
 the second term on the left side of \cref{eq:alg1_H_estimate} can be rewritten as 
\begin{equation}\label{eq:second_alg1_H_estimate}
    \frac{1}{2}\|\bdbeta^{n+1}-\widehat{\bdbeta}\|_{M_{n+2}^{\one}-M_{n+1}^{\one}}^{2}=\langle [\nabla \bdK(\bdf_{\theta}^{n})-\nabla \bdK(\bdf_{\theta}^{n+1})](\bdf^{n+1}-\whf),\bdu^{n+1}-\whu\rangle.
\end{equation}
Using the convexity of $G$, $E^{\ast}$ and \cref{ass:monotone_of_F}, for the first term on the left 
side of \cref{eq:alg1_H_estimate}, we have  
\begin{multline}\label{eq:first_alg1_H_estimate}
   \langle \wtH^{\one}_{n+1}(\bdbeta^{n+1}),\bdbeta^{n+1}-\widehat{\bdbeta}\rangle
\geq  \gamma_{\!F^{\ast}}\|\bdu^{n+1}-\whu\|^2+\langle [\nabla \bdK(\bdf_{\theta}^n)]^{\tra}\bdu^{n+1}-[\nabla \bdK(\whf)]^{\tra} \whu, \bdf^{n+1}-\whf \rangle \\
+\langle \nabla \bdK(\bdf_{\theta}^n)(\bdf^{n+1}-\bdf^n)-\bdK(\bdf_{\theta}^{n+1})+\bdK(\whf), \bdu^{n+1}-\whu\rangle .
\end{multline}
Then, combining \cref{eq:first_alg1_H_estimate} with \cref{eq:second_alg1_H_estimate} leads to 
\begin{multline*}
     \langle \wtH^{\one}_{n+1}(\bdbeta^{n+1}),\bdbeta^{n+1}-\widehat{\bdbeta}\rangle-\frac{1}{2}\|\bdbeta^{n+1}-\widehat{\bdbeta}\|_{M_{n+2}^{\one}-M_{n+1}^{\one}}^{2}\\
\geq   \gamma_{F^{\ast}}\|\bdu^{n+1}-\whu\|^2
+\langle [\nabla \bdK(\bdf_{\theta}^n)-\nabla \bdK(\whf)](\bdf^{n+1}-\whf),\whu\rangle \\
 +\langle \bdK(\whf)-\bdK(\bdf_{\theta}^{n+1})-\nabla \bdK(\bdf_{\theta}^{n+1})(\whf-\bdf_{\theta}^{n+1}), \bdu^{n+1}-\whu\rangle \\
 +\langle [\nabla \bdK(\bdf_{\theta}^n)-\nabla \bdK(\bdf_{\theta}^{n+1})](\bdf^{n+1}-\bdf^n),\bdu^{n+1}-\whu\rangle .
\end{multline*}

Next, we estimate the above terms one by one. Using \cref{ass:nonlinear_restrict} and taking $\bx=\bdf_{\theta}^n$, 
$\bdf=\bdf^{n+1}$ and $\bdu=\bdu^{n+1}$, by Cauchy--Schwarz inequality, we have the following inequality
\begin{multline*}
\langle [\nabla \bdK(\bdf_{\theta}^n)-\nabla \bdK(\whf)](\bdf^{n+1}-\whf),\whu\rangle +\langle \bdK(\whf)-\bdK(\bdf^{n+1})-\nabla \bdK(\bdf^{n+1})(\whf-\bdf^{n+1}),\bdu^{n+1}-\whu\rangle \\
\ge -\gamma_1\|\bdu^{n+1}-\whu\|^2 -2\lambda_1\|\bdf^{n+1}-\bdf^n\|^{2}-2\lambda_1\|\bdf^{n-1}-\bdf^n\|^{2}.
\end{multline*}
By mean value theorem, \cref{ass:locally_Lipschitz} and Cauchy--Schwarz inequality, we have
\begin{multline*}
    \langle [ \bdK(\whf)-\bdK(\bdf_{\theta}^{n+1})-\nabla \bdK(\bdf_{\theta}^{n+1})(\whf-\bdf_{\theta}^{n+1})]- [ \bdK(\whf)-\bdK(\bdf^{n+1})-\nabla \bdK(\bdf^{n+1})(\whf-\bdf^{n+1}) ], \bdu^{n+1}-\whu\rangle \\
    \ge  -\tilde{C} \|\bdf^{n+1}-\bdf^{n}\|^2- \tilde{C}\|\bdu^{n+1}-\whu\|^2.
\end{multline*}
Using \cref{ass:locally_Lipschitz}, triangle inequality and Cauchy--Schwarz inequality, we obtain 
\begin{equation*}
   \langle [\nabla \bdK(\bdf_{\theta}^n)-\nabla \bdK(\bdf_{\theta}^{n+1})](\bdf^{n+1}-\bdf^n),\bdu^{n+1}-\whu\rangle \\
    \ge -\frac{5L\rho_{\bdu}}{2} \|\bdf^{n+1}-\bdf^n\|^2  -\frac{L\rho_{\bdu}}{2} \|\bdf^{n-1}-\bdf^n\|^2 .
\end{equation*} 
Considering the last term on the left side of \cref{eq:alg1_H_estimate}, and using \cref{lem:M_self_adjoint}, we obtain 
\begin{equation*}
     \frac{1}{2}\|\bdbeta^{n+1}-\bdbeta^{n}\|_{M_{n+1}^{\one}}^{2} 
\geq \frac{\kappa}{2\tau}\|\bdf^{n+1}-\bdf^{n}\|^2.
\end{equation*}
By the above inequalities, we get \cref{eq:alg1_H_estimate} immediately. 

For algorithm \cref{eq:algo_proposed2}, similarly, we can estimate 
\begin{multline*}
     \langle \wtH^{\two}_{n+1}(\bdbeta^{n+1}),\bdbeta^{n+1}-\widehat{\bdbeta}\rangle-\frac{1}{2}\|\bdbeta^{n+1}-\widehat{\bdbeta}\|_{M_{n+2}^{\two}-M_{n+1}^{\two}}^{2} \\
     \geq    \gamma_{F^{\ast}} \|\bdu^{n+1}-\whu\|^2 
    + \langle [\nabla \bdK({\bdf^n})-\nabla \bdK(\whf)](\bdf^{n+1}-\whf),\whu\rangle \\
    +\langle \bdK(\whf)- \bdK(\bdf^{n+1})-\nabla \bdK(\bdf^{n+1})(\whf-\bdf^{n+1}), \bdu^{n+1}-\whu \rangle \\
+\langle [\nabla \bdK(\bdf^n) - \nabla \bdK(\bdf^{n+1})](\bdf^{n+1}-\bdf^n), \bdu^{n+1}-\whu \rangle.
\end{multline*}
Using \cref{ass:nonlinear_restrict}, and taking $\bx=\bdf^n,\bdf=\bdf^{n+1}$, $\bdu=\bdu^{n+1}$, we obtain 
\begin{multline*}
    \langle [\nabla \bdK({\bdf^n})-\nabla \bdK(\whf)](\bdf^{n+1}-\whf),\whu\rangle 
    +\langle \bdK(\whf)- \bdK(\bdf^{n+1})-\nabla \bdK(\bdf^{n+1})(\whf-\bdf^{n+1}), \bdu^{n+1}-\whu \rangle \\
    \ge -\gamma_1\|\bdu^{n+1}-\whu\|^2 
 -\lambda_1\|\bdf^{n+1}-\bdf^n\|^{2}.
\end{multline*}
By \cref{ass:locally_Lipschitz}, it is easy to obtain that  
\begin{equation*}
    \langle [\nabla \bdK(\bdf^n) - \nabla \bdK(\bdf^{n+1})](\bdf^{n+1}-\bdf^n), \bdu^{n+1}-\whu \rangle\ge - L\rho_{\bdu} \|\bdf^{n+1}-\bdf^n\|^{2}.
\end{equation*} 
Moreover,  
\[
\frac{1}{2}\|\bdbeta^{n+1}-\bdbeta^{n}\|_{M_{n+1}^{\two}}^{2} \geq \frac{\kappa}{2\tau}\|\bdf^{n+1}-\bdf^{n}\|^2 .
\]
Using the above results, we have \cref{eq:alg2_H_estimate}. 
\end{proof}

\subsection{Proof of \cref{feifei}}\label{proof:roughly_estimate}
\begin{proof} 
Considering the first step of 
algorithm \cref{eq:algo_proposed1}, due to $\bar{\bdf}^0=\bdf^0$ and $\gamma_{F^{\ast}}-\gamma_1-\tilde{C} > 0$, by \cref{eq:alg1_H_estimate} and \cref{thm:basis convergence}, we get
\begin{align*}
    \frac{1}{2}\|\bdbeta^{1}-\widehat{\bdbeta}\|_{M_{2}^{\one}}^{2} &\leq \frac{1}{2}\|\bdbeta^{0}-\widehat{\bdbeta}\|_{M_{1}^{\one}}^{2} - \Lambda_{\one}(\tau) \|\bdf^{1}-\bdf^0\|^2.
\end{align*}
The assumption \cref{eq:step_restrict1} implies that $\Lambda_{\one}(\tau) > 2\lambda_1 + L\rho_{\bdu}/2$.
Then, when $N\ge 1$, by \cref{inq: N steps descent estimate}, we have 
\begin{multline*}
    \frac{1}{2}\|\bdbeta^{N+1}-\widehat{\bdbeta}\|_{M_{N+2}^{\one}}^{2}-\frac{1}{2}\|\bdbeta^{0}-\widehat{\bdbeta}\|_{M_{1}^{\one}}^{2} \\
    \le   - \sum_{n=0}^{N-1} \biggl(\Lambda_{\one}(\tau)-\Bigl(2\lambda_1+\frac{L\rho_{\bdu}}{2}\Bigr)\biggr) \|\bdf^{n+1}-\bdf^n\|^{2} - \Lambda_{\one}(\tau) \|\bdf^{N+1}-\bdf^N\|^2 \le 0, 
\end{multline*}
which implies \cref{eq:alg1_iterates_estimate}.

Next, for algorithm \cref{eq:algo_proposed2}, by \cref{eq:alg2_H_estimate} and \cref{thm:basis convergence}, we obtain
\begin{equation*}
    \frac{1}{2}\|\bdbeta^{n+1}-\widehat{\bdbeta}\|_{M_{n+2}^{\two}}^{2} 
    \leq  \frac{1}{2}\|\bdbeta^{n}-\widehat{\bdbeta}\|_{M_{n+1}^{\two}}^{2} -\Lambda_{\two}(\tau) \|\bdf^{n+1}-\bdf^n\|^{2}. 
\end{equation*}
Then the condition \cref{eq:step_restrict2} implies that $\Lambda_{\two}(\tau) > 0$. Hence, 
\cref{eq:alg2_iterates_estimate} is proved. 
\end{proof}

\subsection{Proof of \cref{lemma:ensure_beta_n_neigborhood}}\label{proof:ensure_beta_O}
\begin{proof}
We begin the proof by estimating the distances from $\bdf^{n+1},\bdf^n$ to $\whf$.  
Using \cref{eq:optimal_conditions} yields that 
\begin{align}
    \label{eq:f_update_alg1}&\whf = (\Id+ \tau \partial G)^{-1}(\whf-\tau [\nabla \bdK(\whf)]^{\tra}\whu-\tau A^{\tra}\whv),\\
     \label{eq:u_update_alg1}&\whu = (\Id+ \sigma_{\!\!K} \partial F^{\ast})^{-1}\bigl(\whu+\sigma_{\!\!K} \bdK(\whf)\bigr),\\
     \label{eq:v_update_alg1}&\whv = (\Id+ \sigma_{\!\!A} \partial E^{\ast})^{-1}(\whv+\sigma_{\!\!A} A\whf ).
\end{align}
Using \cref{eq:updatef_1} and \cref{eq:f_update_alg1}, by \cref{nonexpan of prox}, we obtain
\begin{equation}\label{ineq: fn distance estimate}
    \|\bdf^{n+1}-\whf\|  
     \le \|\bdf^n-\whf\| + C_{\bdf}^n, 
\end{equation}
where $C_{\bdf}^n:=\tau\|[\nabla \bdK(\bdf_{\theta}^n)]^{\tra}\bdu^{n}-[\nabla \bdK(\whf)]^{\tra}\whu+A^{\tra}(\bdv^{n}-\whv)\|$.
Similarly, we obtain the following results
\begin{equation*}
    \|\bdu^{n+1}-\whu\| \le \|\bdu^n-\whu\| + C_{\bdu}^n \quad\text{and}\quad \|\bdv^{n+1}-\whv\| \le \|\bdv^n-\whv\| + C_{\bdv}^n, 
\end{equation*}
where $C_{\bdu}^n:=\sigma_{\!\!K}\| \bdK(\bdf_{\theta}^{n+1}) - \bdK(\whf)\|$ and 
$C_{\bdv}^n:=\sigma_{\!\!A}\| A\bdf_{\theta}^{n+1} - A\whf\|$, respectively. 
Hence we prove a stronger conclusion $\{\bdbeta^{n}\}_{n\in\mathbb{N}}\subset \mathcal{O}(r_{\bdf}, r_{\bdu}, r_{\bdv})$ than $\{\bdbeta^{n}\}_{n\in\mathbb{N}}\subseteq \mathcal{O}(\rho_{\bdf},\rho_{\bdu},\rho_{\bdv})$ by induction. 

Firstly we consider the initial step, by $\bdbeta^0 \in \mathcal{O}(r_{\bdf}, r_{\bdu}, r_{\bdv})$,   
\[
    C^0_{\bdf} \le \tau(\|[\nabla \bdK({\bdf}^0)]^{\tra}(\bdu^{0}-\whu)+[\nabla \bdK ({\bdf}^0)-\nabla \bdK (\whf)]^{\tra}\whu\|+\|A^{\tra}(\bdv^{0}-\whv)\|) \le C_{\bdf}, 
\]
where $C_{\bdf} := \tau\left((r_{\bdu}+2\|\whu\|)C_{\!K}+\|A\|r_{\bdv}\right)$. 
By the convexity of $G$ and \cref{nonexpan of prox}, we have
\begin{align*}
    \langle (\bdf^n - \tau[\nabla \bdK(\bdf_{\theta}^n)]^{\tra}\bdu^{n}-\tau A^{\tra}\bdv^n)-(\whf-\tau [\nabla \bdK(\whf)]^{\tra}\whu-\tau A^{\tra}\whv),\bdf^{n+1}-\whf \rangle \ge \|\bdf^{n+1}-\whf\|^2 
\end{align*}
for all $n$.
By the above inequality, 
\begin{align*}
\|\bdf^{n+1}-\whf\|^{2} &+ \|\bdf^{n+1}-\bdf^{n}\|^{2}- \|\bdf^{n}-\whf\|^{2} \\
 &= 2( \|\bdf^{n+1}-\whf\|^2 - \langle \bdf^n -\whf,\bdf^{n+1}-\whf \rangle)  \\
& \le 2\tau \langle -[\nabla \bdK(\bdf_{\theta}^n)]^{\tra}\bdu^{n}+[\nabla \bdK(\whf)]^{\tra}\whu-A^{\tra}\bdv^n+A^{\tra}\whv,\bdf^{n+1}-\whf \rangle \\
& \le 2 C_{\bdf}^n \|\bdf^{n+1}-\whf\|, 
\end{align*}
which holds for all $n$. 

By the above estimate and \cref{ineq: fn distance estimate}, for $\bdf_{\theta}^1$, we have
\begin{align*}
    \|\bdf_{\theta}^1-\whf\|^2 &=2(\|\bdf^{1}-\whf\|^2 + \|\bdf^{1}-\bdf^{0}\|^2)-\|\bdf^0-\whf\|^2 \\
    & \le 4 C_{\bdf}^0 \|\bdf^{1}-\whf\| +\|\bdf^0-\whf\|^2\\
    &\le 4 C_{\bdf}^0 (\|\bdf^{0}-\whf\|+C_{\bdf}^0) +\|\bdf^0-\whf\|^2\\
    &=(\|\bdf^{0}-\whf\|+2 C_{\bdf}^0)^2\\
    &\le (r_{\bdf}+ 2 C_{\bdf})^2,
\end{align*}
which illustrates $\bdf_{\theta}^1\in \mathcal{B}_{\whf}(r_{\bdf}+2C_{\bdf})$. Therefore,
\[
     C_{\bdu}^0 =\sigma_{\!\!K}\| \bdK(\bdf_{\theta}^1) - \bdK(\whf)\| 
     \le \sigma_{\!\!K} (r_{\bdf}+2C_{\bdf}) C_{\!K} = C_{\bdu}.
\]
Similarly, $C_{\bdv}^0\le C_{\bdv}$. Thus, when $n=1$, the induction proposition holds. Now suppose that induction assumption holds with $1\le n\le N$, which is 
\begin{align}\label{induction assumption}
    \bdbeta^{n}\in \mathcal{O}(r_{\bdf}, r_{\bdu}, r_{\bdv}),\quad C_{\bdf}^{n-1}\le C_{\bdf},\quad C_{\bdu}^{n-1}\le C_{\bdu}, \quad C_{\bdv}^{n-1} \le C_{\bdv}.
\end{align}
Next we proof the case for $n=N+1$. One needs to pay attention to the order of proof here. 
Firstly, 
\begin{align*}
    C^N_{\bdf}&=\tau\|[\nabla \bdK({\bdf}^N)]^{\tra}\bdu^{N}-[\nabla \bdK(\whf)]^{\tra}\whu+A^{\tra}(\bdv^{N}-\whv)\| \\
    &\le \tau(\|[\nabla \bdK({\bdf}^N)]^{\tra}(\bdu^{N}-\whu)+[\nabla \bdK ({\bdf}^N)-\nabla \bdK (\whf)]^{\tra}\whu\|+\|A^{\tra}(\bdv^{N}-\whv)\|) \\
    & \le \tau\left((r_{\bdu}+2\|\whu\|)C_{\!K}+\|A\|r_{\bdv}\right) = C_{\bdf}.
\end{align*}
Then, 
\[
    \|\bdf_{\theta}^{N+1}-\whf\|^2 =2(\|\bdf^{N+1}-\whf\|^2 + \|\bdf^{N+1}-\bdf^{N}\|^2)-\|\bdf^N-\whf\|^2 
    \le (r_{\bdf}+ 2 C_{\bdf})^2. 
\]
Similarly, we obtain 
\[
     C_{\bdu}^N =\sigma_{\!\!K}\| \bdK(\bdf_{\theta}^{N+1}) - \bdK(\whf)\| 
     \le \sigma_{\!\!K} (r_{\bdf}+2C_{\bdf}) C_{\!K} = C_{\bdu}, 
\]
and $C_{\bdv}^N\le C_{\bdv}$. 
Together with the estimate \cref{ineq: fn distance estimate}, we have
\begin{align*}
    \|\bdf^{N+1}-\whf\| &\le \|\bdf^N-\whf\| + C_{\bdf}^N 
    \le r_{\bdf}+ C_{\bdf}^N \le r_{\bdf}+ C_{\bdf},
\end{align*}
which shows that $\bdf^{N+1}\in \mathcal{B}_{\whf}(r_{\bdf}+C_{\bdf})$. Similarly, we have
$\bdu^{N+1}\in \mathcal{B}_{\whu}(r_{\bdu}+ C_{\bdu})$ and $\bdv^{N+1}\in \mathcal{B}_{\whv}(r_{\bdv}+ C_{\bdv})$. These give $\bdbeta^{N+1}\in \mathcal{O}(\rho_{\bdf},\rho_{\bdu},\rho_{\bdv})$. Then by \cref{feifei}, we obtain $\|\bdbeta^{N+1}-\widehat{\bdbeta}\|_{M_{N+2}^{\one}} \le \|\bdbeta^{0}-\widehat{\bdbeta}\|_{M_{1}^{\one}}.$ Using \cref{lem:M_self_adjoint}, we have
\begin{align*}
    \|\bdbeta^{N+1}-\widehat{\bdbeta}\|_{M_{N+2}^{\one}}^2 
    \ge&
    \max\biggl\{ \|\bdbeta^{N+1}-\widehat{\bdbeta}\|_{B_1(\bdf^{N+1}_{\theta})}^2,\|\bdbeta^{N+1}-\widehat{\bdbeta}\|_{B_2(\bdf^{N+1}_{\theta})}^2\biggr\} \\
    \ge& \max\biggl\{ \frac{\kappa}{\tau}\|\bdf^{N+1}-\whf\|^2, \frac{\kappa}{\sigma_{\!\!K}} \|\bdu^{N+1}-\whu\|^2,\frac{\kappa}{\sigma_{\!\!A}}\|\bdv^{N+1}-\whv\|^2 \biggr\}.
\end{align*}
Thus, together with \cref{eq:initial_point_assumption}, we obtain 
\begin{align*}
    \|\bdf^{N+1}-\whf\|\le \frac{\|\bdbeta^{N+1}-\widehat{\bdbeta}\|_{M_{N+2}^{\one}}}{\sqrt{\kappa /\tau}}
    \le\frac{\|\bdbeta^0-\widehat{\bdbeta}\|_{M_1^{\one}}}{\sqrt{\kappa/\tau}}\le r_{\bdf}.
\end{align*}
Similarly, we have
\begin{align*}
    \|\bdu^{N+1}-\whu\|\le \frac{\|\bdbeta^{N+1}-\widehat{\bdbeta}\|_{M_{N+2}^{\one}}}{\sqrt{\kappa /\sigma_{\!\!K}}}
    \le\frac{\|\bdbeta^0-\widehat{\bdbeta}\|_{M_1^{\one}}}{\sqrt{\kappa /\sigma_{\!\!K}}}\le r_{\bdu},
\end{align*}
and $\|\bdv^{N+1}-\whv\|\le r_{\bdv}$. Therefore $\bdbeta^{N+1}\in \mathcal{O}(r_{\bdf}, r_{\bdu}, r_{\bdv})$, and the assumption of induction holds. Hence we complete the proof.
\end{proof}

\subsection{Proof of \cref{thm:PD alg converge}}\label{proof:convergence_thm}
\begin{proof}
By \cref{lemma:ensure_beta_n_neigborhood}, 
iterate points $\bdbeta^n, \bdbeta^{n+1} \in \mathcal{O}(\rho_{\bdf},\rho_{\bdu},\rho_{\bdv})$ 
and $\bdf_{\theta}^{n+1} \in \mathcal{B}_{\whf}(\rho_{\bdf})$. Using \cref{lemma: alg_estimate}, 
we obtain \cref{eq:alg1_H_estimate}. By \cref{thm:basis convergence}, for all $n\ge 1,$
\begin{multline*}
    \frac{1}{2}\|\bdbeta^{n}-\widehat{\bdbeta}\|_{M_{n+1}^{\one}}^{2}- \frac{1}{2}\|\bdbeta^{n+1}-\widehat{\bdbeta}\|_{M_{n+2}^{\one}}^{2} 
   \ge  (\gamma_{F^{\ast}}-\gamma_1-\tilde{C}) \|\bdu^{n+1}-\whu\|^2 \\
   +\Lambda_{\one}(\tau)\|\bdf^{n+1}-\bdf^n\|^{2} -(2\lambda_1+\frac{L\rho_{\bdu}}{2}) \|\bdf^{n-1}-\bdf^n\|^{2}.
\end{multline*}
Specially for the case of $n=0$, since $\bar{\bdf}^0=\bdf^0$, we have
\begin{equation*}
   \frac{1}{2}\|\bdbeta^{0}-\widehat{\bdbeta}\|_{M_{1}^{\one}}^{2}- \frac{1}{2}\|\bdbeta^{1}-\widehat{\bdbeta}\|_{M_{2}^{\one}}^{2} 
  \ge  (\gamma_{F^{\ast}}-\gamma_1) \|\bdu^{1}-\whu\|^2 + \Lambda_{\one}(\tau) \|\bdf^{1}-\bdf^0\|^2.
\end{equation*}
Summarizing the above estimate inequalities over $n=0,\ldots,N$, we obtain
\begin{align*}
    \frac{1}{2}\|\bdbeta^{0}&-\widehat{\bdbeta}\|_{M_{1}^{\one}}^{2} -\frac{1}{2}\|\bdbeta^{N+1}-\widehat{\bdbeta}\|_{M_{N+2}^{\one}}^{2}\\
    \ge &  (\gamma_{F^{\ast}}-\gamma_1-\tilde{C})\sum_{n=0}^N   \|\bdu^{n+1}-\whu\|^2+\Lambda_{\one}(\tau) \|\bdf^{1}-\bdf^0\|^2\\
    &+\sum_{n=1}^{N}  \left[\Lambda_{\one}(\tau) \|\bdf^{n+1}-\bdf^n\|^{2} -(2\lambda_1+\frac{L\rho_{\bdu}}{2}) \|\bdf^n-\bdf^{n-1}\|^{2}\right]\\
    = & (\gamma_{F^{\ast}}-\gamma_1-\tilde{C})\sum_{n=0}^N   \|\bdu^{n+1}-\whu\|^2+ \Lambda_{\one}(\tau) \|\bdf^{N+1}-\bdf^N\|^2\\
    &+\biggl(\Lambda_{\one}(\tau)-(2\lambda_1+\frac{L\rho_{\bdu}}{2})\biggr)\sum_{n=0}^{N-1}   \|\bdf^{n+1}-\bdf^n\|^{2}.
\end{align*}
Recall that $\bdbeta^{N+1} \in \mathcal{O}(r_{\bdf}, r_{\bdu}, r_{\bdv})$ for all $N\in \mathbb{N}$ in \cref{lemma:ensure_beta_n_neigborhood} and \cref{eq:initial_point_assumption}, we obtain that the left side of above inequality is bounded. Thus
\begin{align*}
   \biggl(\Lambda_{\one}(\tau)-(2\lambda_1+\frac{L\rho_{\bdu}}{2})\biggr) \sum_{n=0}^{N-1}   \|\bdf^{n+1}-\bdf^n\|^{2} < \infty.
\end{align*}
The condition \cref{eq:step_restrict1} implies that $\Lambda_{\one}(\tau) > (2\lambda_1+\frac{L\rho_{\bdu}}{2})$, consequently,
\begin{align*}
      \sum_{n=0}^{N-1}  \|\bdf^{n+1}-\bdf^n\|^{2} < \infty,
\end{align*}
which illustrates that $\{\bdf^n\}_{n\in\mathbb{N}}$ is a Cauchy sequence. 
Since $\mathcal{X}$ is a finite dimensional Euclidean space, it follows 
that $\{\bdf^n\}_{n\in\mathbb{N}}$ converges to some $\whf^{\prime}\in\mathcal{B}_{\whf}(r_{\bdf})$. 
Similarly, $\{\bdv^n\}_{n\in\mathbb{N}}$ converges to $\whv^{\prime}\in\mathcal{B}_{\whv}(r_{\bdv})$. 
Moreover, by strongly monotone factor $\gamma_{F^{\ast}} >\gamma_1+\tilde{C}$, we obtain
$\{\bdu^n\}_{n\in\mathbb{N}}$ converges to $\whu$. Recall the iteration formula of \cref{eq:algo_proposed1}, let $N\rightarrow\infty$, we obtain
\begin{equation*}
0\in \begin{bmatrix}
 \partial G(\whf^{\prime})+[\nabla \bdK(\whf^{\prime})]^{\tra} \whu +A^{\tra}\whv^{\prime} \\
\partial F^{*}(\whu)-\bdK(\whf^{\prime})\\
\partial E^{*}(\whv^{\prime})-A(\whf^{\prime})
\end{bmatrix},
\end{equation*}
 which is the desired result.
\end{proof} 

\subsection{Proof of \cref{thm:convex_lips_g}}\label{sec:convex_lips_g_spec_CT}
\begin{proof}
Obviously, $K_j$ is Fr\'echet differentiable. Let $\bdb_{\!d} = \left[b_{d1}, \ldots, b_{dM}\right]^{\tra} \in \Real^{M}$ for $d = 1,\ldots, D$. 
We further define 
\begin{equation*}
z_{jm}(\bdf):=-\sum_{d=1}^D b_{dm}\bda_j^{\tra} \bdf_{\!\!d}, \quad   
\bz_j(\bdf):= [z_{j1}(\bdf), \ldots, z_{jM}(\bdf)]^{\tra} 
\end{equation*}
and sometimes use $z_{jm}$ and $\bz_j$ without $\bdf$ for simplicity.

 Let 
 \[
 \exp(\bz_j):=[\exp(z_{j1}),\ldots,\exp(z_{jM})]^{\tra}.
 \] 
 Then we can reformulate \cref{eq:log_sum_exp} as $K_j(\bdf) = \ln\sum_{m=1}^{M} s_{jm} \exp(z_{jm})$. Direct calculation yields
\begin{equation}\label{eq:bar_bdomega}
 \frac{\partial K_j(\bdf)}{\partial \bz_j} = \frac{\bds_j\odot\exp(\bz_j)}{\bds_j^{\tra}\exp(\bz_j)} =: \bar{\bdomega}_{j}(\bdf),
\end{equation}
where $\odot$ denotes the element-wise product between two same-dimensional vectors. By the chain rule, 
\begin{equation}\label{eq:gradient_Rj}
    \nabla_{\bdf_{\!\!d}}K_j(\bdf) = \sum_{m=1}^{M}\frac{\partial K_j(\bdf)}{\partial z_{jm}} \frac{\partial z_{jm}}{\partial \bdf_{\!\!d}}
    = -\bigl(\bar{\bdomega}_{j}(\bdf)^{\tra}\bdb_{\!d}\bigr) \bda_j.
\end{equation}
To compute the Hessian matrix of $K_j(\bdf)$, we further let 
\begin{equation}\label{eq:zeta_jm}
\zeta_{jm} := s_{jm}\exp(z_{jm}), 
\end{equation} 
and derive  
\[
    \nabla_{\bdf_{\!\!d} \bdf_{\!\!e}}^2K_j(\bdf) = \frac{\bda_j\bda_j^{\tra}}{\bigl(\bds_j^{\tra}\exp(\bz_j)\bigr)^2}
    \sum_{m<n}^M\zeta_{jm}\zeta_{jn}(b_{dm}-b_{dn})(b_{em}-b_{en})
    =:\alpha_{de}(\bdf)\bda_j\bda_j^{\tra}.
\]
For any $\bdu=[\bdu_1^{\tra},\ldots,\bdu_D^{\tra}]^{\tra}$ with $\bdu_d\in \Real^N$ for $d=1,\ldots,D$,
\begin{equation*}
 \bdu^{\tra}\nabla^2 K_j(\bdf)\bdu \\
  =\frac{1}{\bigl(\bds_j^{\tra}\exp(\bz_j)\bigr)^2}\sum_{m<n}^M \sum_{d,e}^D \zeta_{jm}\zeta_{jn} \Bigl[\bda_j^{\tra}\bdu_d (b_{dm}-b_{dn})+\bda_j^{\tra}\bdu_e (b_{em}-b_{en})\Bigl]^2 \geq 0, 
\end{equation*}
which illustrates that $\nabla^2 K_j(\bdf)$ is positive semidefinite. Hence, $K_j(\bdf)$ is convex.
Subsequently, for any $\bdf\in \mathbb{R}^{DN}$, the maximum eigenvalue of $\nabla K_j(\bdf)$ satisfies 
\[
    \mu_{\mathrm{max}} \le \|\bda_j\|^2\sum_{d=1}^D \alpha_{dd}(\bdf) <\infty, 
\] 
which shows that $\nabla K_j$ is global Lipschitz continuous. 
\end{proof}

\subsection{Proof of \cref{lem:R_f}}\label{sec:proof_R_f}
\begin{proof}
By \cref{eq:gradient_Rj}, we know  
\begin{align*}
    \nabla K_j(\bdf) = \begin{bmatrix}
     -\bigl(\bar{\bdomega}_{j}(\bdf)^{\tra}\bdb_1\bigr) \bda_j\\
     \vdots \\
     -\bigl(\bar{\bdomega}_{j}(\bdf)^{\tra}\bdb_{\!D}\bigr) \bda_j
    \end{bmatrix}. 
\end{align*}
Hence, there is a solution to \cref{eq:Kj_Rj} as 
\begin{equation*}
   R_j(\bdf,\tilde{\bdf}) = \diag\biggl( \frac{\bar{\bdomega}_j(\bdf)^{\top}\bdb_1}{\bar{\bdomega}_j(\tilde{\bdf})^{\top}\bdb_1} \Id, \ldots, \frac{\bar{\bdomega}_j(\bdf)^{\top}\bdb_{\!d}}{\bar{\bdomega}_j(\tilde{\bdf})^{\top}\bdb_{\!D}} \Id \biggr). 
\end{equation*}
 

Due to the soft-max structure of $\bar{\bdomega}_j(\bdf)$ in \cref{eq:bar_bdomega} and positivity of $\bdb_{\!d}$, 
\[
    \frac{\min_m b_{dm}}{\max_m b_{dm}}\le \frac{\bar{\bdomega}_j(\bdf)^{\top}\bdb_{\!d}}{\bar{\bdomega}_j(\tilde{\bdf})^{\top}\bdb_{\!d}} \le \frac{\max_m b_{dm}}{\min_m b_{dm}}
\] 
for any $\bdf,\tilde{\bdf}\in \mathbb{R}^{DN}_{+}$,  
which implies that the eigenvalues of $R_j(\bdf,\tilde{\bdf})$ have uniform bound. 
By mean value theorem and Cauchy--Schwarz inequality, we have
\begin{align*}
    \|R_j(\bdf,\tilde{\bdf}) - \Id\|_2 
    &\le \max_{d} \frac{ \|\bdb_{\!d}\|}{\min_m b_{dm}} \|\nabla \bar{\bdomega}_j(\xi\bdf+(1-\xi)\tilde{\bdf})\| \|\bdf-\tilde{\bdf}\|.
\end{align*}
Furthermore, by the definitions in \cref{eq:bar_bdomega} and \cref{eq:zeta_jm}, for any $\bdf\in \Real^{DN}_+$, we have 
\begin{align*}
    \|\nabla \bar{\bdomega}_j(\bdf)\|
    & \le \sum_{d=1}^D\sum_{m=1}^M \sum_{n=1}^M \frac{\zeta_{jm}\zeta_{jn}|b_{dn}-b_{dm}|\|\bda_j\|}{\Bigl(\bds_j^{\tra}\exp(\bz_j)\Bigr)^2} \\
    &\le \sum_{d=1}^D\max_{m,n} |b_{dm}-b_{dn}|\|\bda_j\|. 
\end{align*}
By the definition of $R(\bdf,\tilde{\bdf})$, and using the above estimates, we have 
\begin{align*}
    \|R(\bdf,\tilde{\bdf})-I\|_2&= \max_{j}\|R_j(\bdf)-I\|_2 \\
    &\le \sum_{d=1}^D\max_{m,n} |b_{dm}-b_{dn}| \max_{d}\frac{\|\bdb_{\!d}\|}{\min_m b_{dm}}\max_j \|\bda_j\| \|\bdf-\tilde{\bdf}\|, 
\end{align*}
which concludes the proof. 
\end{proof}

\subsection{Proof of \cref{lem:local_property_g_hat}}\label{sec:proof_local_property_g_hat}
\begin{proof}
By \cref{lem:R_f}, we have
\begin{align*}
\|\bdK(\bdf)-\bdK(\tilde{\bdf})-\nabla \bdK(\tilde{\bdf})(\bdf-\tilde{\bdf})\| &= \biggl\|\int_{0}^{1} (\nabla \bdK(\bdf_{\!\!t})-\nabla \bdK(\tilde{\bdf}))(\bdf-\tilde{\bdf}) ~\mathrm{d} t \biggr\| \\
& \leq \int_{0}^{1} \|\bigl(R(\bdf_{\!\!t},\tilde{\bdf})-I\bigr)\nabla \bdK(\tilde{\bdf})(\bdf-\tilde{\bdf})\| ~\mathrm{d} t\\
& \leq \biggl(\int_{0}^{1} C_R \|\bdf_{\!\!t}-\tilde{\bdf}\|~\mathrm{d}t\biggr) \|\nabla\bdK(\tilde{\bdf})(\bdf-\tilde{\bdf})\| \\
& = \frac12 C_R\|\bdf-\tilde{\bdf}\|\|\nabla \bdK(\tilde{\bdf})(\bdf-\tilde{\bdf})\|, 
\end{align*}
where $\bdf_{\!\!t}:=t\bdf+(1-t)\tilde{\bdf}$. Using $\bdf,\tilde{\bdf}\in\mathcal{B}_{\whf}(\rho)$ yields that 
\begin{equation*}
    \|\bdK(\bdf)-\bdK(\tilde{\bdf})-\nabla \bdK(\tilde{\bdf})(\bdf-\tilde{\bdf})\|\le \rho C_{R}\|\nabla \bdK(\tilde{\bdf})(\bdf-\tilde{\bdf})\|. 
\end{equation*}
Then we obtain 
\[
\|\nabla \bdK(\tilde{\bdf})(\bdf-\tilde{\bdf})\| \le \frac{1}{1-\rho C_{R}}\|\bdK(\bdf)-\bdK(\tilde{\bdf})\|,
\]
if $\rho < 1 / C_{R}$. Hence, \cref{local_property_g} is proved. 
\end{proof}

\subsection{Proof of \cref{thm:hat_u_eqs_0}}\label{proof:case_spectral_CT}

\begin{proof}
By the definition of $\wtH^{\one}_{n+1}$ and $M(\bdf)$, \cref{eq:Mf}, and using \cref{eq:specific_F_star}, $\whu=0$ and $G$ convex, we obtain
\begin{multline*}
     \langle \wtH^{\one}_{n+1}(\bdbeta^{n+1}),\bdbeta^{n+1}-\widehat{\bdbeta}\rangle-\frac{1}{2}\|\bdbeta^{n+1}-\widehat{\bdbeta}\|_{M_{n+2}^{\one}-M_{n+1}^{\one}}^{2}\\
\geq  \|\bdu^{n+1}\|^2 +\langle \bdK(\whf)-\bdK(\bdf_{\theta}^{n+1})-\nabla \bdK(\bdf_{\theta}^{n+1})(\whf-\bdf_{\theta}^{n+1}), \bdu^{n+1}\rangle \\
 +\langle [\nabla \bdK(\bdf_{\theta}^n)-\nabla \bdK(\bdf_{\theta}^{n+1})](\bdf^{n+1}-\bdf^n),\bdu^{n+1}\rangle. 
\end{multline*}
Now we assume that $\bdf_{\theta}^{n+1} \in \mathcal{B}_{\whf}(\rho_{\bdf})$ and $\bdu^{n+1}\in \mathcal{B}_{\whu}(\rho_{\bdu})$. 
Then by \cref{lem:local_property_g_hat} and \cref{eq:updated_1_explicit_u}, we have 
\begin{align*}
    \langle \bdK(\whf)&-\bdK(\bdf_{\theta}^{n+1})-\nabla \bdK(\bdf_{\theta}^{n+1})(\whf-\bdf_{\theta}^{n+1}), \bdu^{n+1}\rangle \\
    &\ge  -\eta \| \bdK(\whf)-\bdK(\bdf_{\theta}^{n+1})\|  \|\bdu^{n+1}\| \\
    &\ge -\frac{\eta}{\sigma_{\!\!K}} \|(\sigma_{\!\!K}+1)\bdu^{n+1}-\bdu^n \|  \|\bdu^{n+1}\| \\
    &\ge  -\Bigl(\frac{\eta}{2\sigma_{\!\!K}}+\eta\Bigr) \|\bdu^{n+1}\|^2 - \frac{\eta}{2\sigma_{\!\!K}}\|\bdu^{n+1}-\bdu^n \|^2.
\end{align*}
By the global Lipschitz continuity in \cref{thm:convex_lips_g},
\begin{align*}
&\langle [\nabla \bdK(\bdf_{\theta}^n)-\nabla \bdK(\bdf_{\theta}^{n+1})](\bdf^{n+1}-\bdf^n),\bdu^{n+1}\rangle 
\ge  -\frac{5L\rho_{\bdu}}{2} \|\bdf^{n+1}-\bdf^n\|^2  -\frac{L\rho_{\bdu}}{2} \|\bdf^{n-1}-\bdf^n\|^2.
\end{align*}
By \cref{lem:M_self_adjoint} and \cref{special_step_restrict}, we obtain
\begin{align*}
    \frac{1}{2}\|\bdbeta^{n+1}-\bdbeta^{n}\|_{M_{n+1}^{\one}}^{2}&  \ge \frac{\kappa}{2\tau}\|\bdf^{n+1}-\bdf^{n}\|^2 +\frac{\|\bdu^{n+1}-\bdu^{n}\|^2}{2\sigma_{\!\!K}}\biggl( 1- \frac{\tau\sigma_{\!\!K}}{s(1-\kappa)} \|\nabla \bdK(\bdf_{\theta}^n)\|^2  \biggr) \\
    & \ge \frac{\kappa}{2\tau}\|\bdf^{n+1}-\bdf^{n}\|^2  +\frac{\eta}{2\sigma_{\!\!K}}\|\bdu^{n+1}-\bdu^{n}\|^2.
\end{align*}
Hence, arranging these terms yields that
    \begin{align*}
     \langle \wtH^{\one}_{n+1}(\bdbeta^{n+1}), &\bdbeta^{n+1}-\widehat{\bdbeta}\rangle-\frac{1}{2}\|\bdbeta^{n+1}-\widehat{\bdbeta}\|_{M_{n+2}^{\one}-M_{n+1}^{\one}}^{2}+ \frac{1}{2}\|\bdbeta^{n+1}-\bdbeta^{n}\|_{M_{n+1}^{\one}}^{2}\\
\ge & \Bigl(1-\frac{\eta}{2\sigma_{\!\!K}}-\eta \Bigr)\|\bdu^{n+1}\|^2
+\frac{\kappa-5\tau L \rho_{\bdu}}{2\tau} \|\bdf^{n+1}-\bdf^n\|^{2} -\frac{L \rho_{\bdu}}{2}\|\bdf^{n-1}-\bdf^n\|^{2}.
\end{align*}
 Then by \cref{thm:basis convergence}, we obtain
\begin{align}\label{special_beta_estimate}
 &\frac{1}{2}\|\bdbeta^{0}-\widehat{\bdbeta}\|_{M_{1}^{\one}}^{2}- \frac{1}{2}\|\bdbeta^{N+1}-\widehat{\bdbeta}\|_{M_{N+2}^{\one}}^{2}\\ \notag
    \geq &   \Bigl(1-\frac{\eta}{2\sigma_{\!\!K}}-\eta\Bigr)\sum_{n=0}^N  \|\bdu^{n+1}\|^2 
    + \frac{\kappa-5\tau L\rho_{\bdu}}{2\tau} \|\bdf^{N+1}-\bdf^N\|^{2}
    + \frac{\kappa-6\tau L\rho_{\bdu}}{2\tau}\sum_{n=0}^{N-1}  \|\bdf^{n+1}-\bdf^n\|^{2} .
\end{align}
Because of the assumptions on $\tau$ and $\sigma_{\!\!K}$ in \cref{special_step_restrict}, we have
\begin{align*}
   \|\bdbeta^{0}-\widehat{\bdbeta}\|_{M_{1}^{\one}}^{2}\ge \|\bdbeta^{N+1}-\widehat{\bdbeta}\|_{M_{N+2}^{\one}}^{2}. 
\end{align*}
Similar with the proof of \cref{lemma:ensure_beta_n_neigborhood}, 
we claim sequence $\{\bdbeta^n\}_{n\in\mathbb{N}}$ is always in $ \mathcal{O}(\rho_{\bdf},\rho_{\bdu},\rho_{\bdv})$ 
and $\{\bdf^n_{\theta}\}_{n\in\mathbb{N}}$ is always in $\mathcal{B}_{\whf}(\rho_{\bdf})$. 
Since \cref{special_beta_estimate} holds for all $N$, we have that $\{\bdf^n\}$ is a Cauchy sequence 
and $\bdu^{n}\rightarrow 0$. So $\bdf^n\rightarrow\whf^{\prime}$ when $n\rightarrow \infty$. Consequently, $\bdK(\whf^{\prime})=\bdg$, which is the desired result.
\end{proof}

\section*{Acknowledgments} 
The authors would like to thank Buxing Chen, Dan Xia, Zhiqiang Xu, Zheng Zhang and Yunsong Zhao for their helpful discussions.

\bibliographystyle{plain}
\bibliography{MCTreferences}

\end{document}